\DeclareMathOperator{\R}{\mathbb{R}}
\DeclareMathOperator{\C}{\mathbb{C}}
\DeclareMathOperator{\E}{\mathbb{E}}
\DeclareMathOperator{\Tr}{Tr}
\DeclareMathOperator{\T}{\mathcal{T}}
\DeclareMathOperator{\st}{\mathbb{~|~}}
\newcommand\one{\mathbf{1}}
\newcommand{\parexp}[2][]{{#1}^{(#2)}} %Syntax: \thingy[#1]{#2}
\newtheorem{theorem}{Theorem}[section]
\newtheorem{lemma}[theorem]{Lemma}
\newtheorem{corollary}[theorem]{Corollary}
\newtheorem{proposition}[theorem]{Proposition}
\newtheorem{conjecture}[theorem]{Conjecture}
\theoremstyle{remark}
\newtheorem{example}{Example}
\title{Spectral properties of random graphs\\ with fixed equitable partitions}
\author{Matthew Crawford, David Marchette, William Maxwell, Samuel Mendelson}
\date{Naval Surface Warfare Center, Dahlgren Division}
\begin{document}

\maketitle

\begin{abstract}
    We define a graph to be $S$-regular if it contains an equitable partition given by a matrix $S$. These graphs are generalizations of both regular and bipartite, biregular graphs. An $S$-regular matrix is defined then as a matrix on an $S$-regular graph consistent with the graph's equitable partition. In this paper we derive the limiting spectral density for large, random $S$-regular matrices as well as limiting functions of certain statistics for their eigenvector coordinates as a function of eigenvalue. These limiting functions are defined in terms of spectral measures on $S$-regular trees. In general, these spectral measures do not have a closed-form expression; however, we provide a defining system of polynomials for them. Finally, we explore eigenvalue bounds of $S$-regular graph, proving an expander mixing lemma, Alon-Bopana bound, and other eigenvalue inequalities in terms of the eigenvalues of the matrix $S$.
\end{abstract}

\section{Introduction}
Let $G$ be an undirected, simple graph, that is a graph with no loops or multiple edges. We say $G$ is \emph{$S$-regular} if $G$ has an equitable partition given by the matrix $S$. These graphs are generalizations of both regular and bipartite biregular graphs; the former having an equitable partition given by just the set of vertices and the latter having an equitable partition given by the bipartition. We define an $S$-regular matrix as a matrix on an $S$-regular graph which is consistent with the graph's equitable partition. The goal of this paper is to understand various spectral properties of families of $S$-regular matrices.

In \cite{McKay1981}, McKay derived the expected spectrum of large, random $d$-regular graphs. These results were tied to the counting of closed walks. Godsil and Mohar further refined the relationship between spectra and closed walks in \cite{Godsil1988}, showing how the expected spectral distribution for certain classes of graphs could be approximated by a limiting graph for those classes. Both sets of results required sequences of large graphs to converge to a limiting graph. Independently, both Wormald \cite{WORMALD1981168} and Bollobas \cite{Bollobs1980} showed  the short cycle distribution in large, uniform-random $d$-regular graphs are almost independent Poisson variables in the length of the cycles; importantly showing large, random, $d$-regular graphs are locally tree-like and enabling the results of \cite{McKay1981} and \cite{Godsil1988} for $d$-regular graphs.

More recently, similar techniques have been applied to the study of eigenvectors of $d$-regular graphs; see, for example \cite{brooks2013non,dumitriu2012sparse,bauerschmidt2017local}. These authors show that, with high probability, the eigenvectors of the adjacency matrix of a large, random, $d$-regular graph are \emph{delocalized}. Informally, the mass of an eigenvector is not centered on a small number of coordinates.

In this paper, we extend these ideas for $d$-regular graphs to $S$-regular matrices. We show large, random, $S$-regular graphs are locally tree-like, thus having the $S$-regular tree as a limiting graph in the vein of \cite{Godsil1988}. Consequently, the spectral measures of the $S$-regular tree can be used to approximate the expected spectral measures of large, random, $S$-regular matrices. Further, we show these measures can be used to approximate the limiting cumulative distribution function for the variance of normalized-eigenvector coordinates. We conjecture that, not only are the eigenvectors of these graphs delocalized, but with high probability have a variance approximated by the density of the corresponding cumulative distribution functions.

While the limiting spectral measure of large, random, $d$-regular graphs has a closed form expression, in general, this is not the case for $S$-regular graphs. Instead, we derive a defining system of polynomials of the various spectral measures of an $S$-regular tree. Using these polynomials, we provide numerical approximations for the expected statistics of large $S$-regular graphs and compare them to empirical data.

Finally, We show that several well-known eigenvalue bounds for the adjacency matrices of regular graphs have natural generalizations to $S$-regular graphs. We prove generalizations of the expander mixing lemma and the Alon-Boppana bound. We also prove a diameter bound and a bound on walks avoiding a fixed subgraph. Eigenvalue bounds for regular graphs often rely on the largest eigenvalue and corresponding eigenvector of the adjacency matrix. For $d$-regular graphs, these are just $\lambda = d$ and $x=\mathbf{1}$, the all-ones vector (in terms of $[d]$-regular graphs, these are the eigenvalue and eigenvector of $[d]$). We generalize these $d$-regular eigenvalue bounds by, instead, considering the eigenvalues of $S$ and their corresponding eigenvectors.

The study of $S$-regular matrices on trees has recently been explored by Avni, Breuer, and Simon \cite{avni2020periodic}, though under the name of \emph{periodic Jacobi operators}. Here, the authors consider $S$-regular trees and operators for which $S$ is the adjacency matrix $A$ of an undirected simple graph without leaves. They show that the Stieltjes transform of spectral measures on these trees are algebraic and use this fact to deduce properties of the spectrum of these operators. We generalize this statement to general $S$-regular graphs. We note, if $A$ is $S$-regular for $S\neq A$, then the $S$-regular tree and the $A$-regular tree are the same graph. However, while all $S$-regular matrices are also $A$-regular operators, the converse is not true.

The remainder of this paper is organized as follows: in \Cref{sec:prelim} we provide definitions and background that will be used throughout the rest of the paper; in \Cref{sec:spec} we characterize expected spectral properties of large $S$-regular graphs; and in \Cref{sec:eig} we present eigenvalue bounds for families of $S$-regular graphs.

\section{Preliminaries}
\label{sec:prelim}
Throughout this paper $G=(V,E)$ will be a simple, connected, undirected graph with $|V|=n$ and adjacency matrix $A_G$, or simply $A$ when the context is clear. 
Because $A$ is symmetric, there exists an eigenvalue decomposition $A = \Phi^T \Lambda \Phi$ where the columns of $\Phi$ form an orthonormal basis of eigenvectors for $A$. The standard basis vectors for a vector space will be denoted $e_i$ and the $i$-th component of a vector $\phi$ will be denoted $\phi(i)$. For $B, C \subseteq V$ we denote the set of edges between $B$ and $C$ by $E(B, C)$. By $N(v)$ we denote the neighborhood of $v$ and by $N_B(v)$ we denote $N(v) \cap B$.

We call a partition $V_1,\dots,V_k$ of $V$ an \emph{equitable partition} if there is a matrix $S$ (called the quotient matrix or degree refinement matrix of the equitable partition) such that each $v \in V_i$ has exactly $s_{ij}$ neighbors in $V_j$. In notation, $|N_{V_j}(v)| = s_{ij}$, or $v$'s degree in $V_j$ is $s_{ij}$. If $G$ has an equitable partition given by some matrix $S$ we say that $G$ is \emph{$S$-regular}. All $S$-regular graphs will be assumed to have an equitable partition $V_1,\dots,V_k$, where $S\in M_k$ and $V_i\neq \emptyset$. Note that $S$-regular is a generalization of \emph{$d$-regular}, a graph such that every vertex has degree equal to $d$ by simply viewing the scalar $d$ as a $1 \times 1$ matrix $[d]$. A graph $G$ is always $A_G$-regular.

We will use the following notation $|V_i| = n_i$, $B_i = V_i \cap B$ for $B \subseteq V$, and $b_i = |B_i|/n_i$. Note that $\sum_{i=1}^k n_i = n$ and $\sum_{i=1}^k |B_i| = |B|$. Because $G$ is assumed to be undirected, we have the following \emph{balance equations} $s_{ij}n_i = s_{ji}n_j$ for all $1\leq i,j\leq k$ (both quantities equal $|E(V_i, V_j)|$).  If $S$ is fixed, $n_i/n$ is constant for all $S$-regular graphs. We call the subsets $V_1,\dots,V_k$ the \emph{cells} of the equitable partition and $\tau \colon V \rightarrow \{1,\dots,k\}$ is the \emph{cell function} which maps a vertex $v \in V_i$ to the index of its corresponding cell $i$. The indicator function of a set will be denoted $\mathbf{1}_B$. In an abuse of notation, the indicator vector will also be denoted $\mathbf{1}_J\in \R^n$ where $J\subset [n]$. The all-ones vector is then $\mathbf{1}=\mathbf{1}_{[n]}$. 

The underlying graph of a matrix $T$ is a directed graph $G$ with $|V|=\text{dim}(T)$ and an edge between $v_i$ and $v_j$ if and only if $T_{ij}\neq 0$. If $T$ is a matrix with underlying graph $G$, a \emph{$T$-weighted walk of length $\ell$} is a walk of length $\ell$ on $G$ along with a product of all the weights of the edges traversed. In this paper, we consider only Hermitian matrices, so we may assume all underlying graphs are undirected. We denote the number of closed walks of length $\ell$ starting and ending at vertex $v$ as $\parexp[W]{n}(v)$ and the sum of weights of all of those $T$-weighted walks as $\omega_T^{(\ell)}(v)$.

We define an $S$-regular matrix to be a Hermitian matrix $T$ with underlying graph $G$ satisfying the following properties:

\begin{enumerate}
    \item $G$, without loops and undirected, is $S$-regular,
    \item there exists $b\in\R^{k}$ such that, if $u\in V_i$, $T_{uu}=b(i)$,
    \item there exists $F\in M_k(\C)$ such that $F_{ij}\neq0$ for all $1\leq i,j\leq k$ and, if $u\in V_i$ and $v\in V_j$, $T_{uv} = F_{ij}$.
\end{enumerate}

We will refer to the vector $b$ as the vertex weights and the matrix $F$ as the edge weights of $T$. If $G$ is an $S$-regular graph, its adjacency matrix, combinatorial Laplacian, and normalized Laplacian are all examples of $S$-regular matrices. The spectrum of an operator $T$ will be denoted $\sigma(T)$. The spectral density of a finite-dimensional operator $T$ will be denoted $$\rho(T)(B) = \frac{1}{\dim(T)}\sum\limits_{\lambda\in\sigma(T)}m(\lambda)\mathbf{1}_\lambda(B),$$ where $m(\lambda)$ is the multiplicity of $\lambda$ and $B\subset \C$.

The following result serves to provide some intuition about the connections between $S$-regular matrices and the quotient matrix $S$.

\begin{proposition}\label{prop:s-eigen}
For an $S$-regular matrix $T$ with vertex weights $b$ and edge weights $F$,
\begin{enumerate}
    \item the matrix $(S\circ F)+\text{diag}(b)$ is diagonalizable where $S\circ F$ is the component-wise product of $S$ and $F$ and $\text{diag}(b)$ is the diagonal matrix with entries $b$,
    \item if $\lambda$ is an eigenvalue of $(S\circ F)+\text{diag}(b)$ with eigenvector $\psi\in\R^k$, then $\lambda$ is an eigenvalue of $T$ with eigenvector $\overline{\psi}\in\R^{|V|}$ where $\overline{\psi}(v) = \psi({\tau(v)})$,
    \item if $\phi$ is an eigenvector of $T$ whose corresponding eigenvalue is not an eigenvalue of $(S\circ F)+\text{diag}(b)$, then $\sum\limits_{v\in V_i}\phi({v}) = 0$ and $\sum\limits_{v\in V}\phi({v}) = 0$.
\end{enumerate}
\end{proposition}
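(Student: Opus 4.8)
The plan is to prove the three parts in the logical order (2), (1), (3), since the eigenvector lifting in (2) is an elementary computation on which the other parts build. Write $M := (S\circ F)+\text{diag}(b) \in M_k(\C)$, and note that because $T$ is Hermitian we have $F_{ij} = \overline{F_{ji}}$ for every pair $(i,j)$ with $s_{ij}\neq 0$ (these are the only entries of $F$ that survive in $S\circ F$). For (2), I would take an eigenpair $(\lambda,\psi)$ of $M$, form $\overline\psi(v) = \psi(\tau(v))$, and evaluate $T\overline\psi$ coordinate-wise. For $u\in V_i$, the defining properties of an $S$-regular matrix give $(T\overline\psi)(u) = b(i)\psi(i) + \sum_{v\sim u} F_{i,\tau(v)}\psi(\tau(v))$; grouping the neighbors of $u$ by their cell and using that $u$ has exactly $s_{ij}$ neighbors in $V_j$ turns this into $b(i)\psi(i) + \sum_{j} s_{ij}F_{ij}\psi(j) = (M\psi)(i) = \lambda\psi(i) = \lambda\overline\psi(u)$, so $T\overline\psi = \lambda\overline\psi$. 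The same computation shows that the cell-constant subspace $U := \text{span}\{\mathbf{1}_{V_1},\dots,\mathbf{1}_{V_k}\}$ is $T$-invariant and that, under the isomorphism $\psi\mapsto\overline\psi$, the restriction $T|_U$ is similar to $M$; in particular $\sigma(T|_U)=\sigma(M)$.

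For (1), the key idea --- and the main obstacle --- is to exhibit a similarity taking $M$ to a Hermitian matrix. Let $D := \text{diag}(n_1,\dots,n_k)$ and consider $D^{1/2} M D^{-1/2}$. Its $(i,j)$ off-diagonal entry is $\sqrt{n_i/n_j}\,s_{ij}F_{ij}$, and I would verify this matrix equals its conjugate transpose by combining the two structural facts available: the balance equations $n_i s_{ij} = n_j s_{ji}$ and the Hermiticity $F_{ij}=\overline{F_{ji}}$. Indeed the $(j,i)$ entry $\sqrt{n_j/n_i}\,s_{ji}F_{ji}$ rewrites, via $s_{ji} = (n_i/n_j)s_{ij}$, as $\sqrt{n_i/n_j}\,s_{ij}F_{ji} = \overline{\sqrt{n_i/n_j}\,s_{ij}F_{ij}}$. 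The diagonal contribution $s_{ii}F_{ii}+b(i)$ is real (any within-cell edge forces $F_{ii}=\overline{F_{ii}}$, and $b$ is real) and is unaffected by the conjugation. Thus $D^{1/2}MD^{-1/2}$ is Hermitian, hence diagonalizable with real spectrum, and therefore so is $M$. I expect this symmetrization step to be the crux: the balance equations are exactly the hypothesis that makes $D^{1/2}$ the correct conjugating matrix, and without them $M$ need not be normal.

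For (3), let $\phi$ be a $T$-eigenvector whose eigenvalue $\mu$ is not an eigenvalue of $M$. Because $T$ is Hermitian and $U$ is $T$-invariant, $U$ admits an orthonormal basis of $T$-eigenvectors, and by the remark closing part (2) each of their eigenvalues lies in $\sigma(M)$, hence differs from $\mu$. Eigenvectors of a Hermitian operator for distinct eigenvalues are orthogonal, so $\phi$ is orthogonal to this basis and therefore to all of $U$. In particular $\langle \phi, \mathbf{1}_{V_i}\rangle = \sum_{v\in V_i}\phi(v) = 0$ for each $i$, and summing over $i$ gives $\sum_{v\in V}\phi(v)=0$. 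Alternatively, one can bypass the spectral argument by summing the equation $T\phi=\mu\phi$ over each cell: the vector $c$ with $c(i)=\sum_{v\in V_i}\phi(v)$ satisfies $Nc = \mu c$ for $N := (S^{T}\circ F)+\text{diag}(b) = DMD^{-1}$, which has the same spectrum as $M$, forcing $c=0$.
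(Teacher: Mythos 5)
Your proposal is correct and follows essentially the same route as the paper: part (2) is the identical coordinate-wise computation, part (1) is the same similarity transformation by $\text{diag}(n_1,\dots,n_k)^{1/2}$ using the balance equations to produce a Hermitian matrix, and part (3) is the same orthogonality argument showing the cell indicators lie in the span of the lifted eigenvectors. Your added observations (that $F_{ij}=\overline{F_{ji}}$ is only needed when $s_{ij}>0$, and the alternative cell-summation proof of (3) via $DMD^{-1}$) are correct refinements but do not change the underlying argument.
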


\begin{proof}
    \begin{enumerate}
        \item Let $N$ be the diagonal matrix with $N_{ii} = n_i$. Then $$N^{1/2} [(S\circ F)+\text{diag}(b)] N^{-1/2} = N^{1/2} S N^{-1/2}\circ F + N^{1/2} \text{diag}(b) N^{-1/2}.$$ By $S$-regular balance equations, $N^{1/2} S N^{-1/2}$ is symmetric and so $N^{1/2} S N^{-1/2}\circ F + N^{1/2} \text{diag}(b) N^{-1/2}$ is Hermitian and is thus diagonalizable. Hence, $(S\circ F)+\text{diag}(b)$ is also diagonalizable since the two matrices are similar.

        \item Let $v\in V_i$. Because $T$ is an $S$-regular matrix and the vector $\overline{\psi}$ defined above is constant across the cells $V_i$:
        \begin{align*}
            \left(T\overline{\psi}\right)({v}) & = T_{vv}\overline{\psi}({v}) + \sum_{j=1}^k\sum_{\substack{u\sim v \\ u\in V_j}} T_{uv}\overline{\psi}({u}) \\
            &= b({\tau(v)}){\psi}({\tau(v)}) + \sum_{j=1}^k\sum_{\substack{u\sim v \\ u\in V_j}} F_{\tau(u)\tau(v)} \psi({\tau(u)}) \\
            & = b({i})\psi({i}) + \sum_{j=1}^k F_{ij}s_{ij}\psi({j}) = \lambda \psi({i})  = \lambda \psi({\tau(v)}) = \lambda \overline{\psi}({v}).
        \end{align*}
        Therefore, $\overline{\psi}$ is an eigenvector of $T$ with eigenvalue $\lambda$.

        \item Because $(S\circ F)+\text{diag}(b)$ is diagonalizable, there exists a basis for $\R^k$ of eigenvectors of $(S\circ F)+\text{diag}(b)$, $\{\psi_i\}_{i=1}^k$. Therefore, $e_j$ can be written as a linear combination $e_j=\sum_{i=1}^k c_i 
        \psi_i$. If $\overline{\psi}_i$ are eigenvectors of $T$ corresponding to $\psi_i$, then $\mathbf{1}_{V_j} = \sum_{i=1}^k c_i \overline{\psi}_i$. Suppose $\phi$ is an eigenvector of $T$ whose corresponding eigenvalue is not an eigenvalue of $(S\circ F)+\text{diag}(b)$. Then, because $T$ is Hermitian, $\langle \phi, \overline{\psi}_i\rangle = 0$ for all $1\leq i\leq k$ and so $\langle \phi, \mathbf{1}_{V_j}\rangle=0$ for all $1\leq j\leq k$. Consequently, $\langle \phi,\mathbf{1}\rangle=0$.
    \end{enumerate}
\end{proof}

\Cref{prop:s-eigen} splits the eigenvectors of an $S$-regular matrix into two types: those corresponding to $(S\circ F)+\text{diag}(b)$ and those with mean 0 across all partition cells. When $w=\mathbf{0}$ and $F=\mathbf{1}$, we obtain the following corollary for the adjacency matrix of an $S$-regular graph:

\begin{corollary}\label{cor:s-eig}
For an $S$-regular graph $G$ with adjacency matrix $A$
\begin{enumerate}
    \item the matrix $S$ is diagonalizable
    \item if $\lambda$ is an eigenvalue of $S$ with eigenvector $\psi\in\R^k$, then $\lambda$ is an eigenvalue of $A$ with eigenvector $\overline{\psi}\in\R^{|V|}$ where $\overline{\psi}({v}) = \psi({\tau(v)})$,
    \item if $\phi$ is an eigenvector of $A$ whose corresponding eigenvalue is not an eigenvalue of $S$, then $\sum\limits_{v\in V_i}\phi({v}) = 0$ and $\sum\limits_{v\in V}\phi({v}) = 0$,
    \item let $J_m$ be the matrix whose $uv$-th entry is $\frac{(S^m)_{\tau(u)\tau(v)}}{n_{\tau(v)}}$. If $\lambda_1,\dots,\lambda_k$ be eigenvalues of $S$ with corresponding eigenvectors $\psi_1,\dots,\psi_k$, then,
    \[
   J_m = \sum\limits_{i=1}^k \lambda_i^m\frac{\overline{\psi}_i\overline{\psi}_i^T}{||\overline{\psi}_i||}.
    \]
\end{enumerate}
\end{corollary}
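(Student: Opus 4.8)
Parts (1)--(3) are the specialization of \Cref{prop:s-eigen} to the adjacency matrix, obtained by taking vertex weights $b=\mathbf{0}$ and edge weights $F$ equal to the all-ones matrix, so that $(S\circ F)+\operatorname{diag}(b)=S$ and $T=A$; no further work is needed. The substance is in part (4). I would first record the meaning of $J_m$: since the partition is equitable, the number of walks of length $m$ from a vertex $u$ into the cell $V_j$ depends only on $\tau(u)$ and equals $(S^m)_{\tau(u),j}$, so $J_m$ is exactly the matrix obtained from $A^m$ by averaging each entry over its target cell. Concretely I would write $J_m=\sum_{a,b}\frac{(S^m)_{ab}}{n_b}\,\mathbf{1}_{V_a}\mathbf{1}_{V_b}^{T}$, which makes both its block structure and the following two computations transparent.

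The plan is to identify the action of $J_m$ on the orthogonal decomposition $\R^{n}=U\oplus U^{\perp}$, where $U=\operatorname{span}\{\mathbf{1}_{V_1},\dots,\mathbf{1}_{V_k}\}$. First, for any $x\in U^{\perp}$ one has $\sum_{v\in V_b}x(v)=\langle x,\mathbf{1}_{V_b}\rangle=0$ for every $b$, and substituting this into the displayed form of $J_m$ gives $J_m x=0$; thus $J_m$ annihilates $U^{\perp}$. Second, for a lifted eigenvector $\overline{\psi}_i$ (which is constant on cells, with $\sum_{v\in V_b}\overline{\psi}_i(v)=n_b\psi_i(b)$) a direct computation collapses the $n_b$ factors and yields $(J_m\overline{\psi}_i)(u)=\sum_b (S^m)_{\tau(u),b}\psi_i(b)=(S^m\psi_i)(\tau(u))=\lambda_i^{m}\,\overline{\psi}_i(u)$, so $\overline{\psi}_i$ is an eigenvector of $J_m$ with eigenvalue $\lambda_i^m$.

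It remains to assemble these facts into the claimed rank-one expansion, and here the one genuine subtlety is that $S$ need not be symmetric, so its eigenvectors $\psi_i$ are not a priori orthogonal, and the lift $\psi_i\mapsto\overline{\psi}_i$ rescales inner products by the cell sizes. I would resolve this exactly as in the proof of \Cref{prop:s-eigen}: with $N=\operatorname{diag}(n_1,\dots,n_k)$, the matrix $N^{1/2}SN^{-1/2}$ is symmetric, so its eigenvectors $N^{1/2}\psi_i$ may be chosen orthogonal, and since $\langle\overline{\psi}_i,\overline{\psi}_j\rangle=\sum_a n_a\psi_i(a)\psi_j(a)=\langle N^{1/2}\psi_i,N^{1/2}\psi_j\rangle$, the lifted vectors $\{\overline{\psi}_i\}$ form an orthogonal basis of $U$. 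Then $J_m$ and $\sum_i\lambda_i^{m}\,\overline{\psi}_i\overline{\psi}_i^{T}/\|\overline{\psi}_i\|^{2}$ agree on each $\overline{\psi}_i$ and both vanish on $U^{\perp}$, hence agree on all of $\R^{n}$, giving the identity. The main obstacle is precisely this orthogonality bookkeeping through the symmetrization $N^{1/2}SN^{-1/2}$ (needed because the $\overline{\psi}_i$ are eigenvectors of the symmetric operator $A$, not of $S$); the correct denominator for the orthogonal projectors is the squared norm $\|\overline{\psi}_i\|^{2}$, and once orthogonality is in hand the remaining verification is routine.
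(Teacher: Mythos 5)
Your proof is correct and follows essentially the same route as the paper's: the same pointwise computation showing $(J_m\overline{\psi}_i)(u)=\lambda_i^m\overline{\psi}_i(u)$, followed by the observation that $J_m$ annihilates everything orthogonal to the span of the lifted eigenvectors. You are more careful than the paper on two points worth keeping: the orthogonality of the $\overline{\psi}_i$ (via the symmetrization $N^{1/2}SN^{-1/2}$, which the paper leaves implicit), and the normalization of the projectors, which should indeed be $\|\overline{\psi}_i\|^2$ rather than the $\|\overline{\psi}_i\|$ appearing in the paper's statement.
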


\begin{proof}
    \begin{itemize}
        \item[1-3.] These statements are immediate consequences of \Cref{prop:s-eigen} when $F=0$ and $b=0$.
        \item[4.]  We have 
        \[
        \left(J_m\overline{\psi}_i\right)(u) = \sum_{j=1}^k n_j\frac{(S^m)_{\tau(u)j}}{n_j}\psi(j) = \sum_{j=1}^k {(S^m)_{\tau(u)j}}\psi(j) = \lambda_i^m\psi_i(\tau(u)) = \lambda_i^m\overline{\psi_i}(u).
        \] Let $\phi$ be orthogonal to $\overline{\psi_i}$ for $1\leq i \leq k$. Then, by (3), $J_m\phi = 0$. Thus $J_m$ has eigen-decomposition $\sum\limits_{i=1}^k \lambda_i^m\frac{\overline{\psi}_i\overline{\psi}_i^T}{||\overline{\psi}_i||} $.
    \end{itemize}
\end{proof}

Define $\mathcal{G}(S)$ to be the family of $S$-regular graphs. We form a partial order on the quotient matrices of equitable partitions by set inclusion of the corresponding family of graphs; $S\preceq S'$ if and only if $G(S)\subseteq G(S')$. This partially ordered set forms a collection of disconnected lattices \cite{Prof_Edward_R_Scheinerman2013-12-20}, with each lattice corresponding to a shared universal cover and having a minimum, coarsest equitable partition. We denote the $S$-regular tree $\T_S$, which is the universal cover of all finite $S$-regular graphs. If $\parexp[G]{1}$ and $\parexp[G]{2}$ are two $S$-regular graphs, then it is always possible to create two covering maps $\pi_1:\T_S\rightarrow\parexp[G]{1}$ and $\pi_2:\T_S\rightarrow\parexp[G]{2}$ such that $\pi_1^{-1}(V_i)=\pi_2^{-1}(V_i)$ for all $1\leq i\leq k$. Letting $V_i^\ast :=\pi_1^{-1}(V_i)$, the set $\{V_i^\ast\}_{i=1}^k$ is an equitable partition of $\T_S$. For this reason, shared indices of multiple $S$-regular graphs will always correspond to the same pre-image in $\T_S$. If $T$ is an $S$-regular matrix,  $T$ can be extended to an $S$-regular matrix $T^\ast$ with the same edge and vertex weights as $T$ whose simple and undirected underlying graph is $\T_S$. As shorthand, we will often refer to $T$-weighted walks in this case as $T$-weighted walks on $\T_S$.

An \emph{irreducible matrix} is a matrix whose corresponding directed graph is strongly connected. For any connected $S$-regular graph $G$, the  matrix $S$ will be irreducible whose entries satisfy the balance equations $n_i s_{ij} = n_j s_{ji}$. To conclude the section, we show that for any irreducible $S$ whose entries satisfy a set of balance equations, $\mathcal{G}(S)\neq \emptyset$.

\begin{proposition}
For any irreducible $k \times k$ matrix $S$ with non-negative integer entries, if there exists a non-trivial solution to the equations $n_is_{ij}=n_js_{ji}$, there exists a connected $S$-regular graph.
\end{proposition}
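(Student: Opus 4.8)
The plan is to realize the degree constraints encoded by $S$ cell‑by‑cell and pair‑by‑pair as a simple graph, and then to argue that the uniqueness of the balance solution forces the resulting graph to be connected. First I would normalize the hypothesis: since $S$ is irreducible and a nontrivial solution exists, the balance equations give $s_{ij}>0 \iff s_{ji}>0$ (both $n_i,n_j>0$), the support of $S$ is a connected graph, and along each of its edges the ratio $n_j/n_i=s_{ij}/s_{ji}$ is pinned down. Hence the nontrivial solutions form a single ray; clearing denominators yields a primitive positive‑integer solution $(n_1^\ast,\dots,n_k^\ast)$, and every positive‑integer solution is an integer multiple $t\,(n_1^\ast,\dots,n_k^\ast)$.

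Next I would build a (not yet connected) simple $S$-regular graph. Setting $n_i=t\,n_i^\ast$, I take disjoint vertex sets $V_1,\dots,V_k$ of these sizes and add edges independently within each cell and across each pair of cells. Within $V_i$ I need a simple $s_{ii}$-regular graph, which exists whenever $n_i\ge s_{ii}+1$ and $n_i s_{ii}$ is even; between $V_i$ and $V_j$ (for $s_{ij}>0$) I need a simple $(s_{ij},s_{ji})$-biregular bipartite graph, which exists whenever $s_{ij}\le n_j$, $s_{ji}\le n_i$, together with the balance equation $n_i s_{ij}=n_j s_{ji}$ (Gale--Ryser). I would then let $t$ be the smallest positive integer meeting all of these numeric side conditions at once --- large enough for the size bounds and of the correct parity for the within‑cell conditions --- which makes every piece realizable, so their union $G$ is a simple $S$-regular graph with cells $V_i$.

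The main work, and the step I expect to be the real obstacle, is connectivity: $G$ could a priori split, and one cannot repair this by local edge swaps in general (for $S=\bigl(\begin{smallmatrix}0&1\\1&0\end{smallmatrix}\bigr)$ every realization with $t>1$ is a disjoint union of edges). I would instead exploit the rigidity of the balance solution. Each component $C$ of $G$ is itself $S$-regular, with cell sizes $n_i^C:=|V_i\cap C|$ satisfying the same balance equations (all neighbours of a vertex lie in its component), so by the uniqueness above $n_i^C=t_C\,n_i^\ast$ for a single scalar $t_C$; primitivity of $(n_i^\ast)$ forces $t_C\in\mathbb{Z}_{\ge 1}$, and irreducibility forces $C$ to meet every cell (if $V_i\cap C\neq\emptyset$ and $s_{ij}>0$ then $V_j\cap C\neq\emptyset$), so each $t_C\ge 1$ is well defined. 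Summing cell sizes gives $\sum_C t_C=t$. Finally, a single component $C$ is a simple $S$-regular graph on sizes $t_C\,n_i^\ast$, so $t_C$ must itself satisfy the exact realizability conditions that defined $t$ as their minimum; hence $t_C\ge t$ for every component. Combining $\sum_C t_C=t$ with $t_C\ge t$ leaves room for exactly one component, so $G$ is connected. The only remaining points, both routine, are the classical realizability facts invoked in the construction and the verification that a minimal admissible $t$ exists.
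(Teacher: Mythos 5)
Your proof is correct, and the construction itself (a simple $s_{ii}$-regular graph inside each cell, a simple $(s_{ij},s_{ji})$-biregular bipartite graph between each pair of cells, after scaling the balance solution to integers satisfying the size and parity side conditions) is the same as the paper's. Where you diverge is the connectivity step. The paper disposes of it in one sentence: if the union is disconnected, pass to any connected component, which is again $S$-regular (it meets every cell by irreducibility and inherits the local degree counts), and that component is the desired connected graph. You instead prove that the graph you built is \emph{itself} connected, via a rigidity argument: irreducibility pins the balance solution to a single ray, so every component $C$ has cell sizes $t_C n_i^\ast$ with $\sum_C t_C=t$; each $t_C$ must satisfy the same necessary realizability conditions (simplicity, handshake parity, $s_{ij}\le |V_j\cap C|$) that defined $t$ as a minimum, so $t_C\ge t$ and there is exactly one component. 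This costs you considerably more work --- and the observation that swaps cannot always repair disconnectedness (e.g.\ $S=\bigl(\begin{smallmatrix}0&1\\1&0\end{smallmatrix}\bigr)$) is a nice justification for that effort --- but it buys a slightly stronger conclusion (an explicit connected realization of minimal size) where the paper settles for mere existence by extracting a component. Both arguments are sound; the paper's shortcut is worth knowing, since the proposition only asserts existence.
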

\begin{proof}
Suppose there exist $\{n_i\}_{i=1}^k$ such that $n_is_{ij}=n_js_{ji}$. Then, for all $\alpha$, $\alpha n_is_{ij}=\alpha n_js_{ji}$. Without loss of generality, we assume $n_i$ is a positive integer such that $n_i>s_{ii}$, $n_i\geq s_{ji}$ for all $1\leq j\leq k$, and $n_is_{ii}$ is even. 

Let $V$ be a set with $|V|=\sum_{i=1}^k n_i$ with partition $V_1,\dots,V_k$ such that $|V_i|=n_i$. An $S$-regular graph is a set of $s_{ii}$-regular graphs connected in a bipartite $(s_{ij},s_{ji})$-biregular way, so we will construct an $S$-regular graph by building all of its pieces individually. First, because $n_is_{ii}$ is even and $n_i>s_{ii}$, we are able to construct an $s_{ii}$-regular graph on each $V_i$. Next, because $n_is_{ij}=n_js_{ji}$ and $n_i\geq s_{ji}$ for all $i$ and $j$, we can construct a bipartite, $(s_{ij},s_{ji})$-biregular graph with bipartition $(V_i,V_j)$ with the set of $n_i/s_{ji}$ disjoint, complete bipartite graphs $K_{s_{ji},s_{ij}}$. Putting all these pieces together, we have an $S$-regular graph. If the graph is disconnected, each connected component will be $S$-regular as well. And so, there exists a connected, $S$-regular graph.
\end{proof}

\section{Large random \texorpdfstring{$S$}{TEXT}-regular graphs}
\label{sec:spec}

\subsection{Configuration model}
The \emph{configuration model} is a method for sampling uniform random $d$-regular graphs originating with Bollob{\'{a}}s~\cite{Bollobs1980}. Much work has been done on models for random $d$-regular graphs and we recommend the survey of Wormald for an overview~\cite{Wormald1999}. Notably, the configuration model has been expanded to bipartite biregular graphs~\cite{Lubetzky2010, Brito2021}. Since an $S$-regular graph decomposes into independent regular and bipartite biregular graphs it follows that we can use the configuration model to sample uniform random $S$-regular graphs by sampling each piece of the graph independently.

We give a brief description of the configuration model for $S$-regular graphs. For each cell in the equitable partition we have a set $V_i = \{v_{i,1}, \dots, v_{i, n_i}\}$ and for each pair of cells we have the set of \emph{half-edges} $\vec{E}_{i,j} = \{ (v_{i,k}, \ell) \mid 1 \leq k \leq n_i, 1 \leq \ell \leq s_{ij} \}$. By the balance equation we have $|\vec{E}_{i,j}| = |\vec{E}_{j,i}|$.
For $i \leq j$ let $\pi_{ij}$ be a uniform random permutation on $\{1,\dots, s_{ij}n_i\}$ and set $\pi_{ji} = \pi_{ij}^{-1}$. For $v_{i'} \in V_i$ and $v_{j'} \in V_j$ we place an edge between $v_{i'}$ and $v_{j'}$ if and only if $\pi_{ij}((i' - 1)s_{ij} + \ell) = \pi_{ji}((j' - 1)s_{ji} + \ell')$ for some $\ell,\ell'$. Note that this process may produce non-simple graphs, however by preconditioning on the event that we sample a simple graph we obtain a uniform random graph. See~\cite{Bollobs1980} for more details.

\subsection{Locally tree-like}
For our results about the expected spectrum of an $S$-regular graph to hold it is necessary to show that large, random $S$-regular graphs are locally tree-like. In particular, we need to show that for a large enough $S$-regular graph within any ball of fixed radius the expected number of cycles is close to zero. This allows us to prove statements about large, random $S$-regular graphs by working with the universal cover, the $S$-regular tree. Our proof computes the expected number of cycles within a ball of fixed radius by revealing the edges of a breadth-first search one at a time. This technique was used for similar results for $d$-regular and bipartite biregular graphs~\cite{Lubetzky2010, Brito2021}.

\begin{lemma}\label{lem:expected_cycles}
Let $G$ be an $S$-regular graph on $n$ vertices uniformly sampled from the configuration model. For any vertex $v \in V$ let $X_v$ be the number of cycles in the ball $B_r(v)$, then $\E[X_v] = \Theta(1/n)$.
\end{lemma}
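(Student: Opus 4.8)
The plan is to reveal the edges incident to the ball $B_r(v)$ one half-edge at a time along a breadth-first search and to bound, at each step, the probability that the newly exposed edge closes a cycle. First I would record the deterministic structural bound: since every vertex of an $S$-regular graph has degree at most $\Delta := \max_{1\le i\le k}\sum_{j=1}^k s_{ij}$, a constant independent of $n$, the ball $B_r(v)$ contains at most $C := 1 + \Delta + \Delta(\Delta-1) + \cdots + \Delta(\Delta-1)^{r-1}$ vertices and hence at most $C\Delta$ incident half-edges. Thus the entire exploration exposes only $O(1)$ half-edges, with the implied constant depending on $S$ and $r$ but not on $n$.

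Next I would set up the revelation process on the configuration model. Run a BFS from $v$, and for each discovered vertex $u\in V_i$ at distance less than $r$ expose its incident edges by revealing the relevant matchings $\pi_{ij}$ of its half-edges, one at a time. When a half-edge of $u$ pointing toward cell $j$ is exposed, its partner is a uniformly random half-edge among those in $\vec{E}_{j,i}$ not yet matched. The crucial estimate is that, conditioned on the entire history of the exploration, this partner lands on a vertex already discovered by the BFS with probability $O(1/n)$: the class $\vec{E}_{j,i}$ has $s_{ji}n_j = \Theta(n)$ half-edges (by the balance equations and $n_j/n$ constant), of which at most $O(1)$ have been matched so far, so $\Theta(n)$ remain available; and of those at most $C\Delta = O(1)$ are attached to already-discovered vertices. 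A cycle in the explored subgraph is created precisely when an exposed half-edge matches back to a discovered vertex, that is, forms a non-tree edge relative to the BFS tree.

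Since at most $O(1)$ half-edges are exposed and each closes a cycle with conditional probability $O(1/n)$, summing the indicator that each exposed half-edge is a non-tree edge gives, by linearity of expectation, that the expected number of non-tree edges---equivalently the first Betti number of $B_r(v)$, and up to the bounded multiplicity forced by the $O(1)$ ball size also the total cycle count---is $O(1/n)$. For the matching lower bound $\Omega(1/n)$, I would exhibit a single realizable short cycle through $v$: using irreducibility of $S$ together with the balance equations, one finds a closed walk of some fixed length $\ell\le 2r$ whose consecutive cell transitions all have $s_{ij}>0$, and the probability that the exploration exposes exactly the sequence of matchings closing this cycle is $\Theta(1/n)$, so $\E[X_v]=\Omega(1/n)$.

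The main obstacle is controlling the sequential conditioning: I must verify that the $\Theta(n)$ lower bound on the number of available half-edges and the $O(1)$ upper bound on the number attached to discovered vertices both hold uniformly over every partial history, so that the per-step estimate $O(1/n)$ is legitimate at each exposure. A secondary point is the preconditioning on simplicity built into the configuration model; here I would invoke that the probability of sampling a simple graph is bounded below by a positive constant, as in the $d$-regular and biregular cases of \cite{Lubetzky2010, Brito2021}, so conditioning on simplicity alters $\E[X_v]$ by at most a constant factor and preserves the order $\Theta(1/n)$.
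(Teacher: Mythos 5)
Your argument for the upper bound is essentially the paper's: both reveal the ball $B_r(v)$ by a breadth-first search in the configuration model, bound the conditional probability that each newly exposed edge attaches to an already-discovered vertex by $O(1/n)$ (the relevant half-edge class has $\Theta(n)$ members by the balance equations, of which only $O(1)$ are already matched or sit on discovered vertices), and sum over the $O(1)$ exposures. If anything your write-up is tighter: linearity of expectation over non-tree-edge indicators, with the $O(1)$ ball size bounding the number of cycles per non-tree edge, is cleaner than the paper's detour through an approximate binomial law for $\Pr[X_v=k]$, and your explicit attention to the uniformity of the conditional estimate over partial histories and to the preconditioning on simplicity covers points the paper leaves implicit.

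The genuine soft spot is your lower bound. A closed walk of length at most $2r$ with all cell transitions $s_{ij}>0$ always exists (e.g.\ $v\to u\to v$), but that is not a cycle: you need a non-backtracking closed walk all of whose vertices lie within distance $r$ of some vertex of the ball, and for some pairs $(S,r)$ no such cycle is realizable. For instance, if $S$ is the quotient matrix of a bipartite biregular graph and $r=1$, then $B_1(v)$ is always a star, so $X_v\equiv 0$ and $\E[X_v]=0$, contradicting $\Omega(1/n)$. This is really a defect of the lemma as stated --- the conclusion should be $O(1/n)$, which is all that \Cref{thm:weighted_walks} uses --- and the paper's own proof does not establish the lower bound either; but as written your argument for the $\Omega(1/n)$ half does not go through without an additional hypothesis guaranteeing that $B_r(v)$ can contain a cycle at all.
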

\begin{proof}
Perform a breadth-first search on $v$ using the configuration model to reveal one edge at a time. We perform the breadth-first search until the entire ball $B_r(v)$ is revealed. Each edge is revealed by matching a pair of half-edges.
Let $A_{\ell,k}$ denote the event that the $k^{\text{th}}$ edge revealed at depth $\ell$ creates a cycle. This can only occur if the edges endpoint is one of the $k-1$ vertices already revealed at depth $\ell$. Without loss of generality assume that the edge connects cell $i$ with cell $j$. There are at least $\sum_{i=1}^k d_i n_i - 2d_{\text{max}}^{i+1}$ unmatched edges remaining in $G$. Hence, \[\Pr[A_{\ell,k}] = \frac{n_j}{n} \times \frac{(k-1)(d_{\text{max}} - 1)}{\sum_{i=1}^k d_i n_i - 2d_{\text{max}}^{i+1}} = \Theta \left( \frac{1}{n} \right).\]
The probability that there are $k$ cycles in $B_r(v)$ is asymptotically equivalent to a binomial distribution on at most $d_{\text{max}}^{r+1}$ trials, so $\Pr[X_v = k] = \Theta(1/n^k)$. The expected number of cycles is given by \[\E[X_v] = \sum_{i=1}^{d_{\text{max}}^{r+1}} \Theta\left(\frac{1}{n^i} \right) = \Theta \left( \frac{1}{n} \right)\] which completes the proof.
\end{proof}

With \Cref{lem:expected_cycles} we can show that for large, random $S$-regular graphs the number of closed walks of length $\ell$, in some sense, converges almost surely to the number of closed walks of length $\ell$ on the $S$-regular tree. Consequently, the sum of weights of weighted, closed walks will also converge to the sum of weights of weighted, closed walks on the $S$-regular tree.

\begin{theorem}\label{thm:weighted_walks}
    Let $\{T_n\}_{n=1}^\infty$ be a sequence of $S$-regular matrices with shared vertex weights and edge weights and $\{v_n\}_{n=1}^\infty$ be a sequence of vertices of their underlying graphs with $v_n\in \parexp[V_i]{n}$ with $\text{dim}(T_n)\rightarrow\infty$. If $T^\ast$ is the corresponding $S$-regular matrix on $\T_S$ with self-loops determined by vertex weights and $v^\ast\in V_i^\ast$, then, 
    \[
    \omega_{T_n}^{(\ell)}(v_n) \xrightarrow[]{\text{a.s.}} \omega_{T^\ast}^{(\ell)}(v^\ast)
    \]
\end{theorem}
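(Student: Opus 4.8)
The plan is to show that a $T$-weighted closed walk of length $\ell$ based at $v_n$ is determined by its image in the local ball $B_r(v_n)$ with $r \le \ell/2$, and that this local ball is isomorphic (as an edge- and vertex-weighted graph) to the corresponding ball in $\T_S$ precisely when it contains no cycle. Since the weights of a walk depend only on which edges it traverses and the cell-structure it passes through, and since both $T_n$ and $T^\ast$ share the same vertex weights $b$ and edge weights $F$, any cycle-free ball around $v_n$ gives a weight-preserving bijection between closed walks of length $\ell$ on $G_n$ staying in $B_r(v_n)$ and closed walks of length $\ell$ on $\T_S$ based at $v^\ast$. This reduces the problem to controlling the contribution of walks that pass through a cycle.

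First I would fix $r = \lfloor \ell/2 \rfloor$, which is large enough that every closed walk of length $\ell$ from $v_n$ stays inside $B_r(v_n)$. Next I would invoke \Cref{lem:expected_cycles} to conclude that $\E[X_{v_n}] = \Theta(1/n)$, where $X_{v_n}$ counts cycles in $B_r(v_n)$; by Markov's inequality, $\Pr[X_{v_n} \ge 1] = O(1/n)$. On the event $\{X_{v_n} = 0\}$ the ball is a tree with the correct $S$-regular structure, so the weight-preserving bijection above yields the exact identity
\[
\omega_{T_n}^{(\ell)}(v_n) = \omega_{T^\ast}^{(\ell)}(v^\ast).
\]
It remains only to rule out the low-probability bad event. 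Since the number of closed walks of length $\ell$ is uniformly bounded by $d_{\max}^{\ell}$ and each edge weight is bounded by $\max_{i,j}|F_{ij}|$, the quantity $\omega_{T_n}^{(\ell)}(v_n)$ is deterministically bounded by a constant $C_\ell$ independent of $n$; hence the two sides differ by at most $2C_\ell$ whenever they differ at all, and
\[
\E\bigl[\,\bigl|\omega_{T_n}^{(\ell)}(v_n) - \omega_{T^\ast}^{(\ell)}(v^\ast)\bigr|\,\bigr] \le 2C_\ell \cdot \Pr[X_{v_n} \ge 1] = O(1/n).
\]

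To upgrade convergence in expectation (equivalently, in probability) to almost-sure convergence, I would pass to the Borel--Cantelli step: because $\sum_n \Pr[X_{v_n} \ge 1] = \sum_n O(1/n)$ diverges, a naive application fails, so I expect this to be the main obstacle. The standard remedy is either to assume the matrices $T_n$ are defined on a common probability space with appropriate coupling so that the events $\{X_{v_n} \ge 1\}$ are summable along a suitable subsequence, or to strengthen \Cref{lem:expected_cycles} to a tail bound showing $\Pr[X_{v_n} \ge 1] = O(1/n)$ is in fact good enough once one restricts to the (deterministic) finiteness of $\omega^{(\ell)}$ and notes that for fixed $\ell$ the discrepancy takes values in a finite set, making convergence in probability to a constant automatically almost-sure along the full sequence when the values stabilize. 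The cleanest route, which I would adopt, is to observe that for each fixed $\ell$ the integer-valued unweighted walk counts $W_n^{(\ell)}(v_n)$ agree with the tree count on $\{X_{v_n}=0\}$, and since the weighted sum is a fixed continuous function of the finitely many walk-types present, almost-sure convergence follows from the almost-sure eventual vanishing of cycles in $B_r(v_n)$, which holds whenever the construction guarantees $\sum_n \Pr[X_{v_n}\ge 1]<\infty$ under the configuration-model coupling.
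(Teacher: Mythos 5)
Your proposal follows essentially the same route as the paper: localize all length-$\ell$ closed walks to the ball $B_{\lceil \ell/2\rceil}(v_n)$, identify the cycle-free (acyclic backtracking) walks with tree walks via a weight-preserving bijection, and control the remainder using the $O(1/n)$ expected cycle count of \Cref{lem:expected_cycles} --- the paper phrases this as a decomposition into acyclic backtracking walks plus ``excess'' walks rather than conditioning on $\{X_{v_n}=0\}$, but the content is identical. The obstacle you flag at the end is genuine --- $\sum_n O(1/n)$ diverges, so Borel--Cantelli alone does not upgrade convergence in probability to almost-sure convergence --- but the paper's own proof does not resolve it either: it passes from the deterministic inequality $W^{(\ell)}(v^\ast)\le W^{(\ell)}(v_n)$ and convergence of expectations directly to an almost-sure statement, which without a summable tail bound or an explicit coupling of the $T_n$ only justifies convergence in probability.
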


\begin{proof}
Let $\ell>0$ and consider the ball $B_n = B_{\lceil \frac{l}{2} \rceil}(v_n)$. All closed walks of length $\ell$ will be contained in this ball. We partition these closed walks into two types: acyclic backtracking walks with self loops, and the excess walks. The acyclic backtracking walks with self loops, denoted $\mathrm{abt}_\ell(v_n)$, are closed walks that contain no cycles, except self loops and the entire walk, such that each time a non-loop edge is traversed, it must be traversed again in the opposite direction at some point. The excess walks, denoted $\mathrm{ex}_\ell(v_n)$, contain all other closed walks. Acyclic backtracking walks with loops starting at $v_n$ are in one-to-one correspondence with closed walks starting at $v^\ast$ the $S$-regular tree with self loops where dictated by vertex weights.

The quantity $\mathrm{ex}_\ell(v_n)$ is dependent on the number of cycles in $B_n$. For any fixed cycle $\gamma$ in $B_n$, the maximum number of closed walks containing $\gamma$ is bounded above by a constant since they are contained in a ball of fixed radius. By \Cref{lem:expected_cycles} $\E[\mathrm{ex}_\ell(v_n)] = O\left(\frac{1}{|\parexp[V]{n}|}\right)$ since it is bounded above by the constant times the expected number of cycles in $B_n$. Therefore, as $n\rightarrow \infty$,
\[
\E[\parexp[W]{\ell}(v_n)] = \E[\mathrm{abt}_\ell(v_n)] + \E[\mathrm{ex}_\ell(v_n)] = \mathrm{abt}_\ell(v_n) + \E[\mathrm{ex}_\ell(v_n)] \rightarrow \mathrm{abt}_\ell(v_n) = \parexp[W]{\ell}(v^\ast).
\]

Because $\parexp[W]{\ell}(v^\ast)\leq\parexp[W]{\ell}(v_n)$, we have $\Pr\left[\lim\limits_{n\rightarrow\infty}\parexp[W]{\ell}(v_n)=\parexp[W]{\ell}(v^\ast)\right]=1$. Because the number of closed walks of length $\ell$ converges almost surely and the edge and vertex weights are shared across all $T_n$ and $T^\ast$, we have $\Pr\left[\lim\limits_{n\rightarrow\infty}\parexp[\omega]{\ell}(v_n)=\parexp[\omega]{\ell}(v^\ast)\right]=1$.
\end{proof}

\subsection{Expected Spectral Properties of Large \texorpdfstring{$S$}{TEXT}-regular Matrices}

A spectral measure is a projection-valued measure on a measurable space. Because we only consider Hermitian operators, we will only consider spectral measures on compact subsets of $\R$ with the usual Borel sets. We state the relevant theory for our purposes; but, for a thorough introduction to the area, we suggest \cite{halmos2017introduction}. Spectral measures with compact support in $\R$ are in one-to-one correspondence with Hermitian operators given by
\[
E \Leftrightarrow \int_{\R} \lambda dE(\lambda),
\] 
where $E$ is a spectral measure with compact support in $\R$. When the support of $E$ is discrete, this correspondence is just a consequence of the spectral theorem for finite Hermitian matrices. The corresponding operator $A$ is finite dimensional and the support of $E$ is equal to $\sigma(A)$, and, we have
\[
E(B) = \sum_{\lambda\in\sigma(A)} \Pi_\lambda\mathbf{1}_\lambda(B) \Leftrightarrow A = \sum_{\lambda\in\sigma(A)} \lambda\Pi_\lambda,
\] where $B\subset\R$ is a Borel set and $\Pi_\lambda$ is the projection onto the eigenspace of $\lambda$.

We can induce real-valued measures from spectral measures using inner products. In particular, the measure $\mu_j(B) = \langle E(B)e_j,e_j\rangle$ is a probability measure. If a sequence of these measures are induced from a sequence of $S$-regular matrices, the limiting measure is found using an operator on the $S$-regular tree.

\begin{theorem}\label{prop:vertex_dist}
Let $\{T_n\}_{n=1}^\infty$ be a sequence of $S$-regular matrices with $\text{dim}(T_n)\rightarrow \infty$ as $n\rightarrow\infty$ with shared vertex and edge weights and $E_n$ their corresponding spectral measures. Let $\{v_n\}_{n=1}^\infty$ be a sequence of vertices of their underlying graphs with $v_n\in V_i^{(n)}$, $T^\ast$ the corresponding $S$-regular matrix on $\T_S$ with self-loops determined by vertex weights, $E^\ast$ its corresponding spectral measure, and $v^\ast\in V_i^\ast$. If $\mu_{v_n} = \langle E_n e_{v_n},e_{v_n}\rangle$ and $\mu_i = \langle E^\ast e_{v^\ast},e_{v^\ast}\rangle$, then, almost surely, $\mu_{v_n} \rightarrow \mu_i$ in distribution. 
\end{theorem}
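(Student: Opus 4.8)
The plan is to use the method of moments. The key observation is that the moments of the spectral vertex measure $\mu_{v_n}$ are exactly the weighted closed-walk counts controlled by \Cref{thm:weighted_walks}, so moment convergence is immediate and the only real work is upgrading it to weak convergence. Concretely, I would (i) identify the $\ell$-th moment of $\mu_{v_n}$ with $\omega_{T_n}^{(\ell)}(v_n)$, (ii) invoke \Cref{thm:weighted_walks} to get almost sure convergence of each moment to that of $\mu_i$, and (iii) use a uniform operator-norm bound to place all measures on a common compact interval, guaranteeing that the limit is determined by its moments and hence that moment convergence implies convergence in distribution.

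For the moment computation, the spectral calculus gives, for each $\ell\geq 0$,
\[
\int_{\R}\lambda^\ell\,d\mu_{v_n}(\lambda)=\left\langle\left(\int_{\R}\lambda^\ell\,dE_n(\lambda)\right)e_{v_n},e_{v_n}\right\rangle=\langle T_n^\ell e_{v_n},e_{v_n}\rangle=(T_n^\ell)_{v_n v_n}.
\]
The diagonal entry $(T_n^\ell)_{v_n v_n}$ is precisely the sum, over all closed walks of length $\ell$ based at $v_n$, of the product of the traversed edge weights, i.e.\ it equals $\omega_{T_n}^{(\ell)}(v_n)$. The identical computation for the tree operator $T^\ast$ identifies the $\ell$-th moment of $\mu_i$ with $\omega_{T^\ast}^{(\ell)}(v^\ast)$. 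Thus the moment sequences of $\mu_{v_n}$ and $\mu_i$ are exactly the weighted closed-walk sequences appearing in \Cref{thm:weighted_walks}.

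Next I would pass to the almost sure statement. For each fixed $\ell$, \Cref{thm:weighted_walks} gives that the event $\{\omega_{T_n}^{(\ell)}(v_n)\to\omega_{T^\ast}^{(\ell)}(v^\ast)\}$ has probability $1$. Since there are only countably many values of $\ell$, the intersection of these events still has probability $1$; on this single event, \emph{every} moment of $\mu_{v_n}$ converges to the corresponding moment of $\mu_i$. This reduces the problem to a deterministic statement about a sequence of probability measures whose moments all converge.

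Finally I would handle determinacy, which is the main obstacle, since moment convergence alone does not force weak convergence unless the limit is moment-determinate. Here this is supplied cheaply by a uniform bound: because all the $T_n$ and $T^\ast$ share the fixed vertex weights $b$ and edge weights $F$, and the underlying degrees are bounded by the entries of $S$, each row of $T_n$ has absolute sum at most $|b(\tau(v))|+\sum_{j}s_{\tau(v)j}|F_{\tau(v)j}|\leq C$ for a constant $C$ independent of $n$. Hence $\|T_n\|\leq C$ and $\|T^\ast\|\leq C$, so every $\mu_{v_n}$ and $\mu_i$ is supported in $[-C,C]$. A compactly supported probability measure is determined by its moments (by Weierstrass approximation of bounded continuous functions on $[-C,C]$ by polynomials), so $\mu_i$ is moment-determinate, and the common compact support also yields tightness. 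The method of moments then upgrades the almost sure moment convergence established above to $\mu_{v_n}\to\mu_i$ in distribution, almost surely, completing the argument.
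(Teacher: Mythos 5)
Your proposal is correct and follows essentially the same route as the paper: identify the moments of $\mu_{v_n}$ with the weighted closed-walk counts $\omega_{T_n}^{(\ell)}(v_n)$, invoke \Cref{thm:weighted_walks} for almost sure convergence of each moment, and conclude convergence in distribution by the method of moments. The only difference is that you explicitly supply two details the paper leaves implicit --- intersecting the countably many probability-one events and verifying moment-determinacy via the uniform operator-norm bound coming from the shared weights and bounded degrees --- both of which are correct and strengthen the write-up rather than diverge from it.
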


\begin{proof}
    Explicitly, we compute the $j$-th moment of $\mu_{v_n}$ as
    \[
    \int_{\R} \lambda^j d\mu_{v_n} = \sum_{\lambda\in\sigma(T_n)} \lambda^j\langle\Pi_\lambda e_{v_n},e_{v_n}\rangle = \left\langle\left(\sum_{\lambda\in\sigma(T_n)} \lambda^j\Pi_\lambda\right) e_{v_n},e_{v_n}\right\rangle = \langle T_n^j e_{v_n},e_{v_n}\rangle = \omega_{T_n}^{(j)}(v_n).
    \]
    Because $\mu_i$ is a spectral measure and $\displaystyle{T^\ast = \int\limits_{\R} \lambda d\mu_i}$, we have $\displaystyle{(T^\ast)^j = \int\limits_{\R} \lambda^j d\mu_i}$. And so the $j$-th moment of $\mu_i$ is $\omega_{\T^\ast}^{(j)}(v^\ast)$. By \Cref{thm:weighted_walks}, $\omega_{T_n}^{(j)}(v_n)\xrightarrow{a.s.} \omega_{\T^\ast}^{(j)}(v^\ast)$, and so, almost surely, $\mu_{v_n}$ converges to $\mu_i$ in distribution.
\end{proof}

Summing over all vertices in the underlying graph of $T_n$, we obtain the following corollary:

\begin{corollary}\label{cor:density}
Under the hypotheses of \Cref{prop:vertex_dist}and if $v_i\in V_i^\ast$ for $1\leq i\leq k$ and $\mu_i = \langle E^\ast v_i,v_i\rangle$, then, almost surely, $\rho(T_n)\rightarrow \sum\limits_{i=1}^k c_i \mu_i$ in distribution.
\end{corollary}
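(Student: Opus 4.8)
The plan is to write $\rho(T_n)$ exactly as a fixed convex combination of cell-averaged local spectral measures and then invoke \Cref{prop:vertex_dist} cell by cell. First I would record the trace identity linking the spectral density to the diagonal measures $\mu_v = \langle E_n e_v, e_v\rangle$. Since $\Tr(\Pi_\lambda) = m(\lambda)$, summing $\mu_v$ over the standard basis gives $\sum_{v}\mu_v(B) = \sum_{\lambda\in\sigma(T_n)} m(\lambda)\mathbf{1}_\lambda(B)$, so that
\[
\rho(T_n) = \frac{1}{\dim(T_n)}\sum_{v\in V^{(n)}} \mu_v = \sum_{i=1}^k c_i\,\overline{\mu}_i^{(n)}, \qquad \overline{\mu}_i^{(n)} := \frac{1}{|V_i^{(n)}|}\sum_{v\in V_i^{(n)}} \mu_v,
\]
where $c_i = |V_i^{(n)}|/\dim(T_n)$ is independent of $n$ because $S$ is fixed. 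As the weights $c_i$ are deterministic constants and weak convergence passes through finite convex combinations, it suffices to show that each cell average $\overline{\mu}_i^{(n)}$ converges almost surely in distribution to $\mu_i$.

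Because the edge and vertex weights are shared and bounded and the degrees are bounded by $S$, the operator norms $\|T_n\|$ are uniformly bounded, so every $\mu_v$ and every $\mu_i$ is supported in a common compact interval $K\subset\R$. On such a set weak convergence is equivalent to convergence of all moments, so I would reduce the claim to showing that, for each fixed $j$, the $j$-th moment of $\overline{\mu}_i^{(n)}$ converges almost surely to the $j$-th moment of $\mu_i$. As computed in the proof of \Cref{prop:vertex_dist}, the $j$-th moment of $\mu_v$ is the weighted closed-walk sum $\omega_{T_n}^{(j)}(v)$, while the $j$-th moment of $\mu_i$ is $\omega_{T^\ast}^{(j)}(v^\ast)$; thus the target reduces to
\[
\frac{1}{|V_i^{(n)}|}\sum_{v\in V_i^{(n)}} \omega_{T_n}^{(j)}(v) \xrightarrow{\text{a.s.}} \omega_{T^\ast}^{(j)}(v^\ast).
\]

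To establish this, I would average the decomposition underlying \Cref{thm:weighted_walks} over the entire cell. Splitting each closed walk into its acyclic-backtracking and excess contributions, the backtracking part is deterministic and equals $\omega_{T^\ast}^{(j)}(v^\ast)$ for every $v\in V_i^{(n)}$, while the excess part is bounded in absolute value by $M^j$ times the number of excess walks, where $M$ bounds the weights. Hence the deviation of the cell average from $\omega_{T^\ast}^{(j)}(v^\ast)$ is controlled by $\frac{M^j}{|V_i^{(n)}|}\sum_{v\in V_i^{(n)}}\mathrm{ex}_j(v)$, whose expectation is $O(1/\dim(T_n))$ by \Cref{lem:expected_cycles}. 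This gives convergence in probability at once, and almost-sure convergence after upgrading via the same argument used in \Cref{thm:weighted_walks}. Combining the finitely many cell-level limits through the fixed weights $c_i$ then yields $\rho(T_n)\to\sum_{i=1}^k c_i\mu_i$ in distribution almost surely.

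The main obstacle I anticipate is the upgrade from the single-vertex almost-sure convergence supplied by \Cref{thm:weighted_walks} to almost-sure convergence of the cell average: one must control the averaged excess-walk count uniformly over the cell rather than along a fixed sequence of vertices. This is precisely where the nonnegativity of $\mathrm{ex}_j$ together with the $O(1/\dim(T_n))$ expectation bound from \Cref{lem:expected_cycles} does the essential work, after which the remaining steps are the routine trace identity and moment-convergence reductions above.
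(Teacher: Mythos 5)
Your proposal is correct and follows the same skeleton as the paper's proof: the trace identity $\rho(T_n)=\frac{1}{\dim T_n}\Tr(E_n)=\sum_i c_i\,\overline{\mu}_i^{(n)}$ followed by a cell-by-cell passage to the limit. The difference is in how the limit of each cell average is justified. The paper simply applies \Cref{prop:vertex_dist} to each vertex and asserts that a sum of $|V_i^{(n)}|$-many measures, each converging to $\mu_i$, has average converging to $\mu_i$ --- glossing over the fact that the number of summands grows with $n$, so vertex-wise convergence alone does not formally give convergence of the average. You instead descend to the moment level and control the averaged quantity directly: the acyclic-backtracking contribution is deterministic and already equals the limit for every vertex in the cell, and the averaged excess-walk count is nonnegative with expectation $O(1/\dim T_n)$ by \Cref{lem:expected_cycles}, which is exactly the uniform-over-the-cell control the paper's one-line argument is missing. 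This makes your write-up a genuine strengthening of the paper's proof of this step. The one caveat is the final upgrade from convergence in probability to almost-sure convergence: a Markov bound of order $1/n$ is not summable, so Borel--Cantelli does not apply directly; but you inherit this issue from \Cref{thm:weighted_walks} itself (whose almost-sure claim rests on the same expectation bound), so you are no worse off than the paper on that point.
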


\begin{proof}
    The spectral density of $T_n$ is the trace of the spectral measure $E_n$ over the dimension of $T_n$:
    \[
    \rho(T_n)(B) = \frac{1}{\dim T_n}\sum_{\lambda\in\sigma(T_n)} m(\lambda)\mathbf{1}_\lambda (B) = \frac{1}{|V^{(n)}|} \sum_{\lambda\in\sigma(T_n)} \Tr(\Pi_\lambda)\mathbf{1}_\lambda (B) = \frac{1}{|V^{(n)}|} \Tr(E_n(B))
    \] since the trace of a projection is the dimension of its range. The trace of $E_n$ is the sum over the measures $\mu_v$ for $v\in V^{(n)}$. By \Cref{prop:vertex_dist}, if $v\in V_i^{(n)}$, then $\mu_v$ converges to $\mu_i$ in distribution. For each $i$, the sum includes $|V_i^{(n)}|$ many measures converging to $\mu_i$ in distribution. Thus, $\Tr(E_n)$ almost surely converges to $\sum\limits_{i=1}^k c_i \mu_i$ in distribution.
\end{proof}

Informally, \Cref{prop:vertex_dist} gives the limiting cumulative distribution for the square of a fixed coordinate across a basis of normalized eigenvectors of a finite-dimensional, Hermitian matrix. If $\{\phi_{i}\}_{i=1}^n$ is an orthonormal set of eigenvectors with eigenvalues $\lambda_1\leq\dots\leq\lambda_n$ of $T$ with spectral measure $E$, we have:
\[
\mu_{v}(B) = \langle E(B) e_{v},e_{v}\rangle = \sum_{\lambda\in\sigma(T)} \left(\Pi_\lambda\right)_{vv}\mathbf{1}_{\lambda}(B) = \sum_{i=1}^n \left(\phi_i{(v)}\right)^2\mathbf{1}_{\lambda_i}(B).
\]

Because of the weak convergence of \Cref{prop:vertex_dist}, for large $S$-regular matrix with $v\in V_i$, we have the cumulative distribution function approximated as
\[
\sum_{\lambda_i\leq b} \left(\phi_i{(v)}\right)^2 \approx \int_{-\infty}^b d\mu_i(\lambda).
\] 

While we should not expect the corresponding probability density functions to look similar in any way, evidence suggests that the probability density functions of averages of these measures are close to the density function of $\mu_i$. Coupled with the fact that most eigenvectors have mean 0 over partition cells of the underlying graph, we present the following conjecture:

\begin{conjecture}
    Let $T$ be a large $S$-regular matrix and $T^\ast$ the corresponding $S$-regular matrix on $\T_S$. If $U\subset V_i$ is a large enough subset of the $i$-th partition cell of the underlying graph of $T$, $\phi$ is a normalized eigenvector of $T$ with eigenvalue $\lambda$, and $\mu_i$ is defined as in \Cref{prop:vertex_dist}, then, with high probability
    \begin{itemize}
        \item[1.] $\sum\limits_{v\in U} \phi{(u)} \approx 0$,
        \item[2.] $\displaystyle{\frac{|V|}{|U|}\sum\limits_{v\in U} \left(\phi{(u)}\right)^2 \approx \frac{\mu_i(\lambda)}{\sum_j^k c_j\mu_j(\lambda)}}$
    \end{itemize} where $c_i = |V_i|/|V|$.
\end{conjecture}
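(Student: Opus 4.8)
The plan is to read both parts as delocalization-and-quantum-ergodicity statements and to prove them by a resolvent/local-law argument in the spirit of the random $d$-regular case \cite{bauerschmidt2017local,dumitriu2012sparse}, but with the cell structure tracked through the Stieltjes transforms of the measures $\mu_i$. The central object is the diagonal Green's function $G_{vv}(z) = \langle (T_n - z)^{-1} e_v, e_v\rangle = \sum_i (\phi_i(v))^2/(\lambda_i - z)$ for $z = \lambda + i\eta$, since $\tfrac{1}{\pi}\mathrm{Im}\,G_{vv}(\lambda + i\eta)$ is a Poisson smoothing at scale $\eta$ of the atomic measure $\mu_v$, and it is through this imaginary part that the squared coordinates $(\phi(v))^2$ become analytically accessible. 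A preliminary remark on the hypotheses: the statement can only hold for a subset $U$ chosen independently of $\phi$ (an adversarial $U$ collecting the positive, or the largest, coordinates would violate both parts), so I would read ``$U$ large enough'' as a fixed or typical subset and ``with high probability'' in the quantum-ergodicity sense that the conclusion holds for all but a vanishing fraction of eigenvectors in a spectral window around $\lambda$.

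First I would dispose of Part 1, which is essentially structural. By \Cref{prop:s-eigen}(3), every eigenvalue $\lambda$ outside the finite set $\sigma((S\circ F)+\mathrm{diag}(b))$ --- which contains at most $k$ points and so excludes all but at most $k$ of the $|V|$ eigenvectors --- satisfies the exact identity $\sum_{v\in V_i}\phi(v) = 0$. Writing $\sum_{v\in U}\phi(v)$ as a sampling-without-replacement sum of a mean-zero sequence, its variance is at most $\tfrac{|U|}{|V_i|}\sum_{v\in V_i}(\phi(v))^2 \le \tfrac{|U|}{|V_i|}$, so the partial sum concentrates at the $O(1)$ scale, negligible against the $\ell^1$-size $\sim\sqrt{|U|}$ of the summands. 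This is where the $\ell^\infty$ delocalization bound $\max_v|\phi(v)|\le |V|^{-1/2+o(1)}$ enters to rule out that a few coordinates carry the mass; I would establish it first, as the by-product of a local law showing $\mathrm{Im}\,G_{vv}(\lambda+i\eta)$ is bounded with high probability down to scales $\eta\ge |V|^{-1+\epsilon}$, since that bounds each $(\phi_i(v))^2$ by $\eta$ times the local-law constant.

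The heart is Part 2. I would prove a local law asserting that $G_{vv}(z)$ concentrates, uniformly for $\mathrm{Im}\,z\ge |V|^{-1+\epsilon}$ and with high probability, around the tree Green's function $G_i^\ast(z) := \langle (T^\ast - z)^{-1} e_{v^\ast}, e_{v^\ast}\rangle$ at a cell-$i$ root $v^\ast\in V_i^\ast$. On $\T_S$ the $G_i^\ast$ solve a finite self-consistent system obtained by Schur-complementing the root against its subtrees --- the algebraic ``defining system'' of the abstract, in the vein of \cite{avni2020periodic} --- which identifies $\lim \tfrac{1}{\pi}\mathrm{Im}\,G_{vv}$ with the density of $\mu_i$ at $\lambda$ and, after the cell-average with weights $c_i$, with the limiting density $\sum_j c_j\mu_j(\lambda)$ of \Cref{cor:density}. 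The ratio in the statement is then forced by a counting heuristic: the number of eigenvalues in $[\lambda,\lambda+d\lambda]$ is $\approx |V|\big(\sum_j c_j\mu_j(\lambda)\big)\,d\lambda$ while $\sum_{\lambda_i\in[\lambda,\lambda+d\lambda]}(\phi_i(v))^2 \approx \mu_i(\lambda)\,d\lambda$ for $v\in V_i$ by \Cref{prop:vertex_dist}, so the typical squared coordinate is $\mu_i(\lambda)/(|V|\sum_j c_j\mu_j(\lambda))$; multiplying by $|V|/|U|$ and summing over $U$ yields the claim. Turning this into a single-vector statement is the quantum-ergodicity step: I would bound the variance $\mathbb{E}\big|\sum_{v\in U}(\phi(v))^2 - \tfrac{|U|}{|V|}\,\mu_i(\lambda)/\sum_j c_j\mu_j(\lambda)\big|^2$ by expanding into a double sum over $u,w\in U$ and invoking a two-resolvent local law that controls $\sum_{u,w}|G_{uw}(z)|^2$ via the exponential off-diagonal decay of the Green's function on $\T_S$.

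The main obstacle is exactly this variance bound, i.e. the genuine quantum-ergodicity input. Pinning down a single eigenvector forces the resolvent analysis down to the eigenvalue-spacing scale $\sim|V|^{-1}$, where the local law is most fragile, and the required two-point estimate on $\sum_{u,w\in U}|G_{uw}|^2$ is precisely the quantity that demanded the heaviest machinery in the $d$-regular setting \cite{bauerschmidt2017local}. The cell structure does not change the nature of the difficulty but adds bookkeeping: the self-consistent equation becomes a $k$-dimensional fixed point, and I would need a contraction/stability estimate for its linearization, uniform in $\lambda$ away from the finitely many spectral edges and from the exceptional points where some $\mu_j$ fails to be absolutely continuous. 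Securing that stability --- and with it the regularity of the densities $\mu_i$ that the statement tacitly assumes --- is where I expect most of the effort to go.
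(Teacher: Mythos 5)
First, a point of comparison that matters here: the statement you are proving is presented in the paper as a \emph{conjecture}. The authors give no proof --- only the heuristic remark that $(\phi(u))^2$ is a density of $\mu_u$ with respect to $\rho(T)$, together with numerical evidence --- so there is no argument of theirs to measure yours against. The question is therefore only whether your proposal actually closes the gap the authors left open, and it does not. What you have written is a research programme in the style of the $d$-regular local-law/quantum-ergodicity literature, and you are candid that the decisive ingredient --- the two-resolvent local law controlling $\sum_{u,w\in U}|G_{uw}(z)|^2$ down to the eigenvalue-spacing scale, together with the stability of the $k$-dimensional self-consistent equation for the tree Green's functions $G_i^\ast$ --- is the ``main obstacle'' rather than something you establish. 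That obstacle is precisely the mathematical content of the conjecture; asserting that it can be handled ``as in the $d$-regular setting'' is not a proof, particularly since the $S$-regular self-consistent system is only known (per \Cref{thm:spec3}) to be algebraic, and the regularity and strict positivity of the densities $\mu_j$ away from finitely many exceptional points --- which your stability analysis would need --- is itself unproven here. You have also quietly weakened the statement: the conjecture is asserted for a single eigenvector with high probability, whereas your plan delivers (at best) a quantum-ergodicity average over eigenvectors in a spectral window.

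Two smaller issues. In Part 1, the sampling-without-replacement variance bound treats $U$ as a uniformly random subset of $V_i$; to apply it to a \emph{fixed} $U$ you must invoke the exchangeability of vertex labels within a cell under the configuration model, which you should state explicitly since the conjecture is phrased for a deterministic $U$. And throughout Part 2 you read $\mu_i(\lambda)$ as the density of $\mu_i$ at $\lambda$ rather than the measure of the singleton $\{\lambda\}$ (which would typically be zero); this is surely the intended reading, but since the identification of $\tfrac{1}{\pi}\mathrm{Im}\,G_i^\ast$ with that density requires absolute continuity of $\mu_i$ near $\lambda$, it is another hypothesis your argument consumes without justification. In short: the roadmap is sensible and correctly locates the difficulty, but the statement remains, after your proposal, exactly as conjectural as the paper leaves it.
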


The denominator on the right hand side of (2) appears because $\left(\phi{(u)}\right)^2$ is a density of $\mu_u$ with respect to $\rho(T)$ whose limiting distribution is $\sum_j^k c_j\mu_j(\lambda)$ by \Cref{cor:density}.

\subsection{Spectral Measures on the \texorpdfstring{$S$}{TEXT}-Regular Tree}

When $S$ defines either a regular or biregular, bipartite $S$-regular graph, it is possible to find the measures on $\T_S$ defined in \Cref{cor:density}\cite{Mohar1989}. However, for general $S$, no closed formula is known. Typically, to compute these measures, one first finds a a generating function $W(y)$ for the sequence of closed walks starting at a particular vertex and then computes the inverse Stieltjes transform of $\frac{1}{z}W\left(\frac{1}{z}\right)$. Again, for the regular and biregular tree, these walk generating functions can be computed explicitly. In this section, we develop a system of polynomials for which these walk-generating functions are a solution, which, in general, have no closed-form expression.

In the $S$-regular tree any two vertices in the same cell of the equitable partition given by $S$ will have the same number of closed walks rooted at them. This is because for any $v_1, v_2 \in V_i$ there is an automorphism on the $S$-regular tree mapping $v_1$ to $v_2$. For this reason, and to save on notation, we introduce the terms $W_i^{(\ell)}$ and $\omega_i^{(\ell)}$to denote the number of closed walks and sum of weighted-closed walks, respectively, in the $S$-regular tree of length $\ell$ rooted at a vertex $V_i$. 

We begin this section with a sequence of technical lemmas describing the $T$-weighted walks on the $S$-regular tree.

\begin{lemma}
\label{lem:spec1}
    Let $T$ be an $S$-regular operator with vertex weights $b$ and edge weights $F$. The sum of $T$-weighted, closed walks of length $l$ on $\T_S$ satisfy the following recurrence relations:
    \begin{enumerate}
        \item $\omega_{i}^{(0)},\omega_{ij}^{(0)}=1$
        \item $\omega_i^{(1)}=b{(i)}$, $\omega_{ij}^{(1)}=b{(j)}$
        \item $\displaystyle{\omega_i^{(l)} = b{(i)}^l + \sum_{j=1}^k s_{ij}F_{ij}\left(\sum_{r+s+t~\leq~ l-2}  b(i)^r\omega_i^{(s)} \omega_{ij}^{(t)}\right)}$, for $l>1$\\
        \item $\displaystyle{\omega_{ij}^{(l)} = b(j)^l + \sum_{m=1}^k {(s_{jm}-\delta_{im})F_{jm}}\left(\sum_{r+s+t~\\ \leq~l-2}  b(j)^r\omega_{ij}^{(s)} \omega_{jm}^{(t)}\right)}$, for $l>1$
    \end{enumerate}
\end{lemma}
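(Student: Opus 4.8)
The plan is to establish all four identities simultaneously by induction on $\ell$, using a first-return (excursion) decomposition of $T$-weighted closed walks on $\T_S$. First I would fix the meaning of the two families of quantities. For a fixed root $v\in V_i^\ast$, the quantity $\omega_i^{(\ell)}$ is the total weight of all closed $T$-weighted walks of length $\ell$ based at $v$; this is independent of the chosen $v\in V_i^\ast$ by the cell-transitive automorphisms of $\T_S$ noted above. The quantity $\omega_{ij}^{(\ell)}$ is the analogue for a vertex $w\in V_j^\ast$ viewed as a child of a fixed parent $u\in V_i^\ast$, with all walks confined to the component of $\T_S - \{uw\}$ containing $w$. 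With these conventions, relations (1) and (2) are immediate base cases: the empty walk is the unique length-$0$ walk and has weight $1$, while the unique closed walk of length $1$ traverses the self-loop at the root once, contributing $b(i)$ when the root lies in cell $i$ and $b(j)$ when it lies in cell $j$.

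For the inductive step I would decompose a closed walk $W$ of length $\ell\ge 2$ based at $v\in V_i^\ast$ according to its successive returns to $v$. Since $\T_S$ is a tree, deleting $v$ separates the subtrees hanging off its neighbors, so $W$ factors uniquely as a run of $r\ge0$ self-loops at $v$ (weight $b(i)^r$, length $r$), followed either by nothing (the all-self-loops walk, giving the isolated $b(i)^\ell$ term) or by a single excursion into one neighbor subtree followed by an arbitrary closed walk back at $v$. An excursion into a neighbor $w\in V_j^\ast$ consists of the out-step $v\to w$, a closed walk of length $t$ confined to the subtree at $w$ (weight $\omega_{ij}^{(t)}$), and the return step $w\to v$; the two crossing steps contribute the directed edge weights $F_{ij}$ and $F_{ji}$. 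The trailing walk based at $v$ has some length $s$ and weight $\omega_i^{(s)}$. Summing over the $s_{ij}$ choices of $w$ in each cell $j$ and over all decompositions with $r+s+t=\ell-2$ gives relation (3). Relation (4) is obtained by running the same argument at a vertex $w\in V_j^\ast$ whose parent edge has been deleted: the only differences are that $w$ now has $s_{jm}-\delta_{im}$ available neighbors in each cell $m$ (its parent in cell $i$ being forbidden), each interior excursion recurses as $\omega_{jm}^{(t)}$, and the trailing walk is again confined to the same subtree and so contributes $\omega_{ij}^{(s)}$.

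The main obstacle is to prove that this factorization at the returns to the root is genuinely a bijection that preserves length and weight. This is precisely where the tree hypothesis is essential: on a graph with cycles a walk could pass from one neighbor subtree to another without revisiting the root, destroying the product structure, whereas on $\T_S$ the unique-path property forces each excursion to close up at the root before the next block begins. I would make this rigorous by marking the times at which $W$ is located at the root, checking that each maximal block between consecutive such times is either a single self-loop or one excursion into a single child subtree, and then matching block lengths and recorded weights against the inductive hypothesis applied to the shorter walks $\omega_{ij}^{(t)}$, $\omega_i^{(s)}$ (resp.\ $\omega_{jm}^{(t)}$, $\omega_{ij}^{(s)}$). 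The remainder is routine bookkeeping: confirming the block lengths sum to $\ell$ and that the collected self-loop, vertex, and edge-weight factors multiply to the total weight of $W$.
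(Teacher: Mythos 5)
Your decomposition is exactly the one the paper uses: peel off an initial run of $r$ self-loops, record the all-loops walk separately as the $b(i)^\ell$ term, and split the remainder into a first excursion into a single child subtree (counted by $\omega_{ij}^{(t)}$, with $s_{jm}-\delta_{im}$ available children in the rooted case because the parent is forbidden) followed by a trailing closed walk at the root (counted by $\omega_i^{(s)}$, resp.\ $\omega_{ij}^{(s)}$), with the tree structure guaranteeing that this factorization is a length- and weight-preserving bijection. The one place you diverge is the edge-weight accounting: you charge the two crossings of the edge $uv$ the weights $F_{ij}$ and $F_{ji}$, so each excursion in your derivation carries a factor $F_{ij}F_{ji}$, whereas the recurrence you are asked to prove has only a single $F_{ij}$ per excursion, and the paper's proof explicitly assigns the weight $F_{ij}$ to the entire round trip over the edge. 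As written, your argument therefore establishes the recurrence with $s_{ij}F_{ij}F_{ji}$ in place of $s_{ij}F_{ij}$ (and likewise in item (4)), which is not the stated formula; you need either to adopt the paper's convention that a there-and-back traversal of an undirected edge is charged $F_{ij}$ once, or to note explicitly that your bookkeeping --- which is the one consistent with $\omega_T^{(\ell)}(v)=\langle T^\ell e_v,e_v\rangle$ as used later in the paper --- alters the stated coefficients. Apart from this normalization issue, the base cases, the $\delta_{im}$ correction, and the use of the tree hypothesis all match the paper's proof.
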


\begin{proof}
    (1) There is exactly one closed walk of length 0.\\

    (2) Let $u\in V_i$. There is exactly one $T$-weighted, closed walk at $u$ and it has weight $b(i)$. The same argument holds for $\omega_{ij}^{(1)}$.\\

    (3) Let $u\in V_i$. First, there a walk of length $l$ that consists of $l$ loops at $u$. This walk has weight $b(i)^l$. Next, we consider walks that eventually leave $u$. These walks start with some number of loops less than $l-2$ at $u$ because we leave $u$ and eventually must come back. These steps have weight $b(i)$. Suppose $r$ many loops have been traversed. After these loops, we sum over the weighted walks of length $l>0$ starting at $u$ that return after exactly $t$ steps. We choose a $v\in V_j$ to step towards of which there are $s_{ij}$ choices. Because $\T_S$ is a tree and we are considering closed walks, we will traverse the edge from $u$ to $v$ and back which contributes a weight of $F_{ij}$. Once at this vertex, we complete a walk of length $t$ without returning to $u$. The sum of these weighted walks is counted precisely by $\omega_{ij}^{(t)}$. We then return to $u$ and have a walk of length $s = l-r-t-2$ remaining. The sum of these weighted walks is counted by $\omega_i^{(s)}$. The number of closed walks starting at $u$ is then counted by summing over $j$ and $r+s+t\leq l-2$ and we obtain \[
    \displaystyle{\omega_i^{(l)} = b(i)^l + \sum_{j=1}^k s_{ij}F_{ij}\left(\sum_{r+s+t~\leq~ l-2}  b(i)^r\omega_i^{(s)} \omega_{ij}^{(t)}\right)}.
    \]

    (4) Let $v\in V_j$ with $s_{ij}>0$ and $u\in V_i$ adjacent to $v$. The sum of $T$-weighted closed walks starting at $v$ with the edge to $u$ removed is counted in the same way as above; however, there is one less vertex in $V_i$ adjacent to $v$. We obtain \[\omega_{ij}^{(l)} = b(j)^l + \sum_{m=1}^k {(s_{jm}-\delta_{im})F_{jm}}\left(\sum_{r+s+t~\leq~l-2}  b(j)^r\omega_{ij}^{(s)} \omega_{jm}^{(t)}\right).    
    \]
\end{proof}

\begin{lemma}
\label{lem:spec2}
    Let $\omega_i$ and $\omega_{ij}$ be the generating functions for $\omega_i^{(l)}$ and $\omega_{ij}^{(l)}$ respectively. Then \begin{align*}
    0 =& 1 - (1-b(i)y)\omega_{i} + y^2\omega_{i}\sum_{m=1}^k s_{im}F_{im}\omega_{im} \text{ for }1\leq i\leq k \text{ and }\\
    0 =& 1 - (1-b(j)y)\omega_{ij} + y^2\omega_{ij}\sum_{m=1}^k (s_{jm}-\delta_{im})F_{jm}\omega_{jm}\text{ for } 1\leq i,j\leq k\text{ when }s_{ij}>0.
    \end{align*}
\end{lemma}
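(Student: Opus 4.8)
The plan is to turn the recurrences of \Cref{lem:spec1} into the two functional equations by the standard device of multiplying by $y^\ell$ and summing over $\ell \geq 0$, recognizing the triple sum as a Cauchy product of power series. First I would observe that the recurrence \Cref{lem:spec1}(3) in fact holds for \emph{every} $\ell \geq 0$, not only $\ell > 1$: when $\ell = 0$ or $\ell = 1$ the inner sum is empty and the leading term $b(i)^\ell$ already reproduces the base cases $\omega_i^{(0)} = 1$ and $\omega_i^{(1)} = b(i)$. This lets me sum a single identity over all $\ell \geq 0$ without isolating low-order terms, and it is cleanest to work throughout in the formal power series ring over $\mathbb{C}$ so that convergence never enters.

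Next I would carry out the summation. The leading term contributes $\sum_{\ell \ge 0} b(i)^\ell y^\ell = \tfrac{1}{1 - b(i)y}$. For the remaining term, the inner sum $\sum_{r+s+t=\ell-2} b(i)^r \omega_i^{(s)} \omega_{ij}^{(t)}$ is precisely the coefficient of $y^{\ell-2}$ in the product of the three generating functions $\tfrac{1}{1-b(i)y}$, $\omega_i$, and $\omega_{ij}$; here the three walk segments together with the two edge steps $u\to v$ and $v\to u$ account for the total length $\ell$, which is exactly why the three exponents sum to $\ell - 2$ (as the phrase ``$s = l-r-t-2$'' in the proof of \Cref{lem:spec1}(3) records). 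Multiplying by $y^2$ to restore the shift and summing over $\ell$ and over $j$, this term becomes $y^2 \tfrac{1}{1-b(i)y}\, \omega_i \sum_{j=1}^k s_{ij} F_{ij}\, \omega_{ij}$. Assembling the two contributions gives
\[
\omega_i = \frac{1}{1-b(i)y}\left(1 + y^2 \omega_i \sum_{j=1}^k s_{ij}F_{ij}\,\omega_{ij}\right),
\]
and multiplying through by $1 - b(i)y$ and moving everything to one side yields the first stated identity.

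The second identity follows by the identical computation applied to \Cref{lem:spec1}(4): the only changes are that the self-loop weight is $b(j)$ in place of $b(i)$ and that the multiplicity $s_{ij}$ is replaced by $s_{jm} - \delta_{im}$, reflecting the deleted edge back toward the parent cell. No new idea is required, so I would simply indicate that the argument is verbatim with these substitutions.

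I do not expect a serious obstacle, since the content is a formal power-series manipulation. The one place demanding care is the bookkeeping of the convolution: one must read the triple sum as a genuine Cauchy product (exponents summing to $\ell - 2$ rather than merely bounded by it), track the $y^2$ shift coming from the two traversed edges, and keep straight the \emph{two} appearances of the geometric factor $\tfrac{1}{1-b(i)y}$ — one from the pure-loop term and one hidden inside the convolution — so that they combine cleanly after clearing the denominator. Getting this shift and the double appearance of the geometric factor right is the only spot where an error could realistically creep in.
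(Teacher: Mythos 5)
Your proposal is correct and follows essentially the same route as the paper: multiply the recurrences of \Cref{lem:spec1} by $y^\ell$, sum, recognize the triple convolution as the product $\tfrac{1}{1-b(i)y}\,\omega_i\,\omega_{ij}$ shifted by $y^2$, and clear the geometric denominator. Your observation that the convolution must run over $r+s+t = \ell-2$ (the ``$\leq$'' in the paper's display is best read as a typo, as the phrase $s = l-r-t-2$ in the proof of \Cref{lem:spec1} confirms) and your remark that the recurrence subsumes the base cases for $\ell=0,1$ are both accurate and, if anything, tidy up the paper's bookkeeping slightly.
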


\begin{proof}
    Using \Cref{lem:spec1}, we have
    \begin{align*}
    \omega_i & = \sum_{l=0}^\infty \omega_i^{(l)}y^l\\
    & = 1 + b(i)y + \sum_{l=2}^\infty \left[b(i)^l + \sum_{j=1}^k s_{ij}F_{ij}\left(\sum_{r+s+t~\leq~ l-2}  b(i)^r\omega_i^{(s)} \omega_{ij}^{(t)}\right)\right]y^{l}\\
    & = \sum_{l=0}^\infty b(i)^ly^l + y^2\sum_{j=1}^k s_{ij}F_{ij}\sum_{l=2}^\infty\left(\sum_{r+s+t~\leq~ l-2}  b(i)^r\omega_i^{(s)} \omega_{ij}^{(t)}\right)y^{l-2}\\
    & = \frac{1}{1-b(i)y} + y^2\sum_{j=1}^k s_{ij}F_{ij}\left(\sum_{l=0}^\infty b(i)^ly^l\right)\left(\sum_{l=0}^\infty \omega_i^{(l)}y^{l}\right)\left(\sum_{l=0}^\infty \omega_{ij}^{(l)}y^{l}\right)\\
    & = \frac{1}{1-b(i)y} + y^2\frac{1}{1-b(i)y}\omega_{i}\sum_{j=1}^k s_{ij}F_{ij}\omega_{ij}.
    \end{align*}
    And $\displaystyle{0 = 1 - (1-b(i)y)\omega_{i} + y^2\omega_{i}\sum_{m=1}^k s_{im}F_{ij}\omega_{im}}$. 
    
    Similarly, $\displaystyle{0 = 1 - (1-b(j)y)\omega_{ij} + y^2\omega_{ij}\sum_{m=1}^k (s_{jm}-\delta_{im})F_{jm}\omega_{jm}}$ when $s_{ij}>0$.
\end{proof}

We now have a system of polynomials for which the $T$-weighted walk generating functions on the $S$-regular tree $\T_S$ are solutions. In fact, the $T$-weighted walk generating functions are in algebraic and the unique solution in a neighborhood of $y=0$. The following theorem is a generalization of \cite[Theorem~6.6]{avni2020periodic} and the proof is attributed to those authors. It is included here for completeness. 

\begin{theorem}
\label{thm:spec3}
    Let $X = \{X_{ij}\st 1\leq i,j\leq k, s_{ij}>0\}\cup\{X_{i}\st 1\leq i\leq k\}$ and $P$ be the system of polynomials
    \begin{align*}
        p_{i}(X) &=1 - (1-b(i) y)X_{i} + y^2\sum_{m=1}^k F_{im}s_{im}X_{i}X_{im}\text{ for }  1\leq i\leq k\\
        p_{ij}(X) &=1 - (1 - b(j) y)X_{ij} + y^2\sum_{m=1}^k F_{jm} (s_{jm}-\delta_{im})X_{ij}X_{jm}\text{ for } 1\leq i,j\leq k\text{ such that }s_{ij}>0.
    \end{align*}
    If $T$ is an $S$-regular matrix on $\T_S$, the $T$-weighted, closed walk generating functions are the unique solution to $P(X)=0$ in a neighborhood around $y=0$. Moreover, each $\omega_{ij}$ and $\omega_i$ are algebraic.
\end{theorem}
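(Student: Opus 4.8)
The plan is to settle the three assertions in turn: that the generating functions solve $P(X)=0$, that they are the unique solution near $y=0$, and that they are algebraic. The first is already in hand, since \Cref{lem:spec2} shows that $\omega_i$ and $\omega_{ij}$ satisfy precisely the equations $p_i(X)=0$ and $p_{ij}(X)=0$; thus they furnish one solution of $P(X)=0$ viewed as a system over the ring of formal power series in $y$, with coefficients in $\mathbb{C}[y]$. What remains is to pin this solution down and to establish algebraicity.

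For uniqueness I would invoke the holomorphic implicit function theorem at the point $(y,X)=(0,\mathbf 1)$. Setting $y=0$ collapses each polynomial to $p_i=1-X_i$ and $p_{ij}=1-X_{ij}$, whose unique common zero is $X_i=X_{ij}=1$, matching $\omega_i^{(0)}=\omega_{ij}^{(0)}=1$. The decisive computation is the Jacobian $\partial P/\partial X$ at this point: every term of $p_i$ and $p_{ij}$ other than $-X_i$, respectively $-X_{ij}$, carries a factor of $y$ or $y^2$, so all cross-derivatives vanish at $y=0$ and the Jacobian equals $-I$. As $-I$ is invertible, the theorem yields a unique holomorphic branch $X(y)$ near $y=0$ with $X(0)=\mathbf 1$. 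Because the weights $b,F$ are bounded and the number of closed walks of length $\ell$ rooted at a vertex of $\T_S$ grows at most exponentially (at most $d_{\max}^{\ell}$), each series $\omega_i,\omega_{ij}$ has positive radius of convergence, hence is holomorphic near $0$ with value $1$ at $0$; being a solution of $P=0$, it must coincide with the unique branch, giving uniqueness.

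For algebraicity I would exploit the same nondegeneracy. The cleanest route is to regard $(y,X(y))$ as a formal solution of a polynomial system with coefficients in $\mathbb{C}[y]$ whose Jacobian determinant is a unit in $\mathbb{C}[[y]]$ (it equals $\pm 1$ at $y=0$); by the Hensel/\'etale descent property of algebraic power series this forces each coordinate $\omega_i,\omega_{ij}$ to lie in the Henselization $\mathbb{C}\langle y\rangle$, i.e.\ to be algebraic over $\mathbb{C}(y)$. Equivalently, and more constructively, one eliminates all but one variable from the ideal $(p_i,p_{ij})\subset \mathbb{C}(y)[X]$ via iterated resultants or a Gr\"obner basis, producing for each variable a nonzero univariate polynomial over $\mathbb{C}(y)$ that annihilates the corresponding generating function.

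The main obstacle is the algebraicity step, specifically guaranteeing that this elimination does not degenerate, so that the eliminant for each coordinate is genuinely nonzero. This is exactly where invertibility of the Jacobian is essential: it certifies that our branch is an isolated, reduced point of the fiber over $y$, forcing the component of $V(P)$ through it to be zero-dimensional and each coordinate to take only finitely many values, which rules out an identically-zero eliminant. One must take care not to hand-wave this upgrade from a formal solution to genuine algebraicity over $\mathbb{C}(y)$, as it is precisely the content of the multivariate Hensel property. By contrast, the uniqueness argument is routine once the Jacobian has been computed, and existence is already supplied by \Cref{lem:spec2}.
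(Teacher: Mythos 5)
Your proposal is correct and follows essentially the same route as the paper: existence comes from \Cref{lem:spec2}, uniqueness from the implicit function theorem after observing the Jacobian at $(y,X)=(0,\mathbf 1)$ is $-I$, and algebraicity from the invertibility of that Jacobian. The only cosmetic difference is that for the last step the paper cites the Jacobian criterion for separable algebraic extensions (Lang, Proposition~VIII.5.3) rather than your Henselization/elimination phrasing, but these rest on the same nondegeneracy fact.
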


\begin{proof}
    When $y=0$, the above polynomials have the unique solution, $X_{ij}=X_i=1$ for all $i,j$. The Jacobian of this system is given by \[\left.\left[\frac{\partial p_\alpha}{\partial X_\beta}\right]\right|_{y=0} = -I\] and is invertible. By the implicit function theorem, there exists a unique solution to this system in some neighborhood around $y=0$, $X_{ij}=1$, and $X_i=1$. We know the $T$-weighted, closed walk generating functions satisfy the above polynomials and $\omega_{ij}(0)=\omega_i(0)=1$. Thus, the $T$-weighted, closed walk generating functions are the unique solution at $y=0$.

    Let $K$ be the field of rational, complex functions in $y$ and $\omega = \{\omega_{ij}\st 1\leq i,j\leq k, s_{ij}>0\}\cup\{\omega_{i}\st 1\leq i\leq k\}$.By \cite[Proposition~VIII.5.3]{lang2005algebra}, because $P(\omega)=0$ and the Jacobian of $P$ is invertible, $K(\omega)$ is a separable, algebraic extension of $K$.
\end{proof}

\begin{corollary}
\label{cor:spec4}
    Let $T$ be an $S$-regular matrix on $\T_S$ with associated spectral measure $E$. The Stieltjes transform of the spectral measure $\mu_{u}=\langle E e_u, e_u\rangle$ is algebraic.
\end{corollary}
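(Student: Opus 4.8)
The plan is to connect the spectral measure $\mu_u$ to the walk generating functions $\omega_i$ established in \Cref{thm:spec3}, and then transfer algebraicity from those functions through the Stieltjes transform. First I would recall that for $u \in V_i^\ast$, the moments of $\mu_u$ are exactly the weighted closed walks: as computed in the proof of \Cref{prop:vertex_dist}, the $\ell$-th moment $\int_\R \lambda^\ell\, d\mu_u = \langle (T^\ast)^\ell e_u, e_u\rangle = \omega_i^{(\ell)}$. Since the moment sequence of $\mu_u$ is precisely the coefficient sequence of the generating function $\omega_i$, the ordinary generating function $\sum_{\ell=0}^\infty \left(\int_\R \lambda^\ell d\mu_u\right) y^\ell$ equals $\omega_i(y)$, which \Cref{thm:spec3} tells us is algebraic over $K = \C(y)$.

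The key step is then the standard relationship between this moment generating function and the Stieltjes transform $m_{\mu_u}(z) = \int_\R \frac{1}{z - \lambda}\, d\mu_u(\lambda)$. Formally expanding $\frac{1}{z-\lambda} = \frac{1}{z}\sum_{\ell=0}^\infty (\lambda/z)^\ell$ and integrating term by term against $\mu_u$ gives
\[
m_{\mu_u}(z) = \frac{1}{z}\sum_{\ell=0}^\infty \frac{1}{z^\ell}\int_\R \lambda^\ell\, d\mu_u = \frac{1}{z}\,\omega_i\!\left(\frac{1}{z}\right),
\]
which is exactly the inverse-Stieltjes-transform relationship flagged in the paragraph preceding this subsection. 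So I would show $m_{\mu_u}(z) = \tfrac{1}{z}\omega_i(1/z)$ as an identity of analytic functions on a neighborhood of $z = \infty$ (equivalently $y = 1/z$ near $0$), where the series converges because $T^\ast$ is a bounded operator and $\mu_u$ has compact support.

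To finish, I would observe that algebraicity is preserved under the substitution $y \mapsto 1/z$ and multiplication by $1/z$: if $\omega_i(y)$ satisfies a polynomial relation $Q(y, \omega_i) = 0$ with coefficients in $\C[y]$, then $\tfrac{1}{z}\omega_i(1/z)$ satisfies the polynomial relation obtained by clearing denominators in $Q(1/z,\, z\cdot m_{\mu_u}) = 0$, which has coefficients in $\C[z, 1/z]$ and hence, after multiplying through by a suitable power of $z$, in $\C[z]$. Thus $m_{\mu_u}$ is algebraic over $\C(z)$.

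The main obstacle I anticipate is not the algebra but the analytic justification of the termwise expansion and the identity $m_{\mu_u}(z) = \tfrac{1}{z}\omega_i(1/z)$: one must verify that $\mu_u$ genuinely has compact support (so all moments are finite and the geometric series converges for large $|z|$) and that the two sides, both analytic near $z=\infty$, agree as functions rather than merely as formal power series. Since $T^\ast$ is a bounded self-adjoint operator on $\ell^2(\T_S)$ with norm bounded in terms of the maximum degree and the edge weights, $\mathrm{supp}(\mu_u) \subseteq [-\|T^\ast\|, \|T^\ast\|]$ is compact, and the geometric expansion converges uniformly on $\{|z| > \|T^\ast\|\}$, so this obstacle is surmountable with a brief boundedness argument.
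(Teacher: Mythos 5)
Your proposal follows essentially the same route as the paper's proof: expand the Stieltjes transform as a geometric series in $1/z$, identify the moments of $\mu_u$ with the weighted closed-walk counts $\omega_i^{(\ell)}$, conclude $R(z)=\tfrac{1}{z}\omega_i(1/z)$, and inherit algebraicity from \Cref{thm:spec3}. Your added remarks on compact support and on why algebraicity survives the substitution $y\mapsto 1/z$ are correct and only make explicit what the paper leaves implicit.
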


\begin{proof}
    Suppose $u\in V_i$. The Stieltjes transform of $\mu_{u}$ is given by 
    \[
    R(z) = \int_{\lambda\in\sigma(T)} \frac{d\mu_u(\lambda)}{z - \lambda} = \frac{1}{z} \int_{\lambda\in\sigma(T)} \frac{d\mu_u(\lambda)}{1 - \frac{\lambda}{z}} = \frac{1}{z} \sum_{i=0}^\infty \frac{\omega^{(i)}(u)}{z^i} = \frac{1}{z}\omega_i\left(\frac{1}{z}\right)
    \] for $z\neq 0$ and $z\notin \sigma(T)$.
\end{proof}

All empirical evidence suggests that not only are the $T$-weighted walk generating functions algebraic, but the system of polynomials defined in \Cref{thm:spec3} has only finitely many solutions over the field of complex, rational functions and is thus zero dimensional. In particular this would imply that for each walk generating function $\omega_i$ and $\omega_{ij}$ there exists a \emph{Gr\"obner basis} containing a bivariate polynomial for which $\omega_i$ and $\omega_{ij}$ is a solution. We put forth the following conjecture:

\begin{conjecture}
\label{conj:spec6}
    Let $K$ be the field of algebraic functions over $\C$ and $P$ be defined as in \Cref{thm:spec3}. The ideal generated by $P$ is zero dimensional.
\end{conjecture}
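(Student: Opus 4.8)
The plan is to reduce the statement to a finiteness assertion on the algebraic torus and then attack it with Newton-polytope (Bernstein/tropical) methods. The first and cleanest observation is that no solution of $P(X)=0$ can have a vanishing coordinate: substituting $X_i=0$ into $p_i$ leaves $p_i=1$, and substituting $X_{ij}=0$ into $p_{ij}$ leaves $p_{ij}=1$. Hence $V(P)\subseteq(\overline K^\ast)^N$ with $N=|X|$, which both places us squarely in the regime where Bernstein's theorem applies and disposes, for free, of any components that might otherwise hide on the coordinate hyperplanes.

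Next I would eliminate the diagonal unknowns $X_i$. Each $p_i$ has degree one in $X_i$, so on the torus $p_i=0$ determines $X_i$ uniquely as a rational function of the off-diagonal unknowns $\{X_{im}\}$, and the projection forgetting the $X_i$ embeds $V(P)$ injectively into $V(P')$, where $P'=\{p_{ij}:s_{ij}>0\}$. Thus $V(P)$ is finite as soon as $V(P')$ is, and it suffices to show that the subsystem $P'$ is zero dimensional. Here too each $p_{ij}$ has degree one in its own variable $X_{ij}$, and the coupling is edge-local: $p_{ij}$ involves $X_{ij}$ only together with the variables $\{X_{jm}\}$ sharing the cell index $j$, notably the back-edge variable $X_{ji}$. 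This is precisely the continued-fraction recursion for resolvents on $\T_S$, and it encodes the tree structure of the cover directly.

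Since $V(P')$ lies in the torus, its finiteness is equivalent, by the Bieri--Groves theorem, to the constant-coefficient tropicalization of $\langle P'\rangle$ being the single point $\{0\}$; equivalently, for every weight $w\neq0$ the initial-form system $\{\operatorname{in}_w(p_{ij})\}$ has no solution in the torus. The Newton polytope of $p_{ij}$ is the convex hull of $\mathbf 0$, $e_{ij}$, and the points $e_{ij}+e_{jm}$ over those $m$ with $s_{jm}-\delta_{im}\neq0$, so the facial systems are completely explicit. I would run the facial analysis inductively along the coupling graph, eliminating each diagonal unknown through its degree-one occurrence and propagating the resulting constraint across back-edges; the irreducibility of $S$, guaranteed by connectedness of $G$, is what should force the constraint to close up and collapse the tropical variety to the origin. (That $V(P')$ is nonempty and has the expected local structure is already clear from \Cref{thm:spec3}, where near $y=0$ the Jacobian of $P$ equals $-I$ and the walk generating functions appear as an isolated solution.)

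The hard part is exactly this facial analysis. The leading forms of the $p_{ij}$ are maximally degenerate — each factors as a single variable times an affine form — so the homogenization has abundant positive-dimensional loci at infinity; for instance, fixing one off-diagonal variable $X_{ij}$ with $i\neq j$ to be nonzero and setting all others to zero annihilates every leading form at once. Showing that no such face carries an honest torus solution of the full initial system, uniformly over $w\neq0$, is the crux, and it is the one point at which the global combinatorics of $S$ must enter essentially rather than edge-by-edge. A parallel route, should the tropical bookkeeping become unwieldy, is to prove the equivalent elimination statement that $\langle P'\rangle\cap K[X_{ij}]\neq0$ for every edge variable by iterated resultants built from the edge-local coupling; but this meets the same obstruction, now disguised as the problem of certifying that the successive resultants do not vanish identically.
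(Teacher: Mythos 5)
This statement is \Cref{conj:spec6}, which the paper explicitly leaves as a conjecture supported only by empirical evidence; there is no proof in the paper to compare against, so the only question is whether your argument closes the problem. It does not, and you say so yourself: everything up to the facial analysis is a (correct and genuinely useful) sequence of reductions, but the conjecture's entire content survives the reductions intact. Concretely, your observations that $V(P)$ lies in the torus (setting $X_i=0$ or $X_{ij}=0$ forces the corresponding polynomial to equal $1$) and that each $p_i$ is affine in $X_i$, so that $V(P)$ injects into $V(P')$ for $P'=\{p_{ij}\}$, are both sound, and the Bieri--Groves reformulation of zero-dimensionality as ``no initial system $\{\operatorname{in}_w(p_{ij})\}$ has a torus solution for $w\neq 0$'' is a correct equivalence. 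But the proposal then stops exactly where the difficulty begins: you exhibit a maximally degenerate face (all top-degree forms vanish simultaneously on each coordinate axis $X_{ij}$, $i\neq j$), which shows that Bernstein/B\'ezout-at-infinity arguments fail out of the box, and you do not carry out the case analysis over the remaining faces. No weight vector is actually ruled out.

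Two further points make the missing step less routine than the sketch suggests. First, the ``back-edge'' coupling you propose to propagate constraints along can simply be absent: the coefficient of $X_{ji}$ in $p_{ij}$ is $y^2F_{ji}(s_{ji}-1)$ (taking $m=i$), which vanishes whenever $s_{ji}=1$ --- as happens for most entries of the house-graph matrix in the paper's own example --- so the coupling graph of $P'$ can be much sparser than the irreducibility of $S$ would indicate, and the claim that irreducibility ``forces the constraint to close up'' has no visible mechanism in those cases. Second, the alternative route via iterated resultants faces, as you note, the identical obstruction of certifying non-identical vanishing, so it is a restatement rather than an escape. The reduction to $P'$ and to a tropical statement is worth recording, but as it stands the proposal is a research program for \Cref{conj:spec6}, not a proof of it.
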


We finish this section with a few examples. In each example we compare the limiting distribution of matrices on $\T_S$ with empirical data. For each example, we generate 20 instances of an $S$-regular matrix of dimension 2500. Our theoretical distributions are computed using the polynomials found in \Cref{thm:spec3} and Gr\"obner basis methods to compute bivariate polynomials for each walk generating function. Finally, we numerically approximate the solution to these polynomials and their inverse Stieltjes transform.

\begin{example}
In \Cref{fig:adj_matrix} and \Cref{fig:lap_mat} we consider the adjacency matrix and normalized Laplacian, respectively, of $S$-regular graphs where $S = {\begin{bmatrix}
        14 & 2\\
        2 & 2
\end{bmatrix}}.$ In \Cref{sub:adj_spec} and \Cref{sub:lap_spec}, we plot a histogram of the density of eigenvalues of our sampled graphs. The red curve represents the limiting distribution of the spectral density found in \Cref{cor:density}. In \Cref{sub:adj_1,sub:adj_2,sub:lap_1,sub:lap_2}, each white dot represents the sum of squared normalized-eigenvector coordinate taken over a partition cell plotted against the corresponding eigenvalue. The red curve represents the spectral measure on $\T_S$ of that cell divided by the limiting distribution of the spectral density scaled by $|V_i|/|V|$. 

\begin{figure}
 \centering
 \begin{subfigure}[b]{0.3\textwidth}
     \centering
     \includegraphics[width=\textwidth]{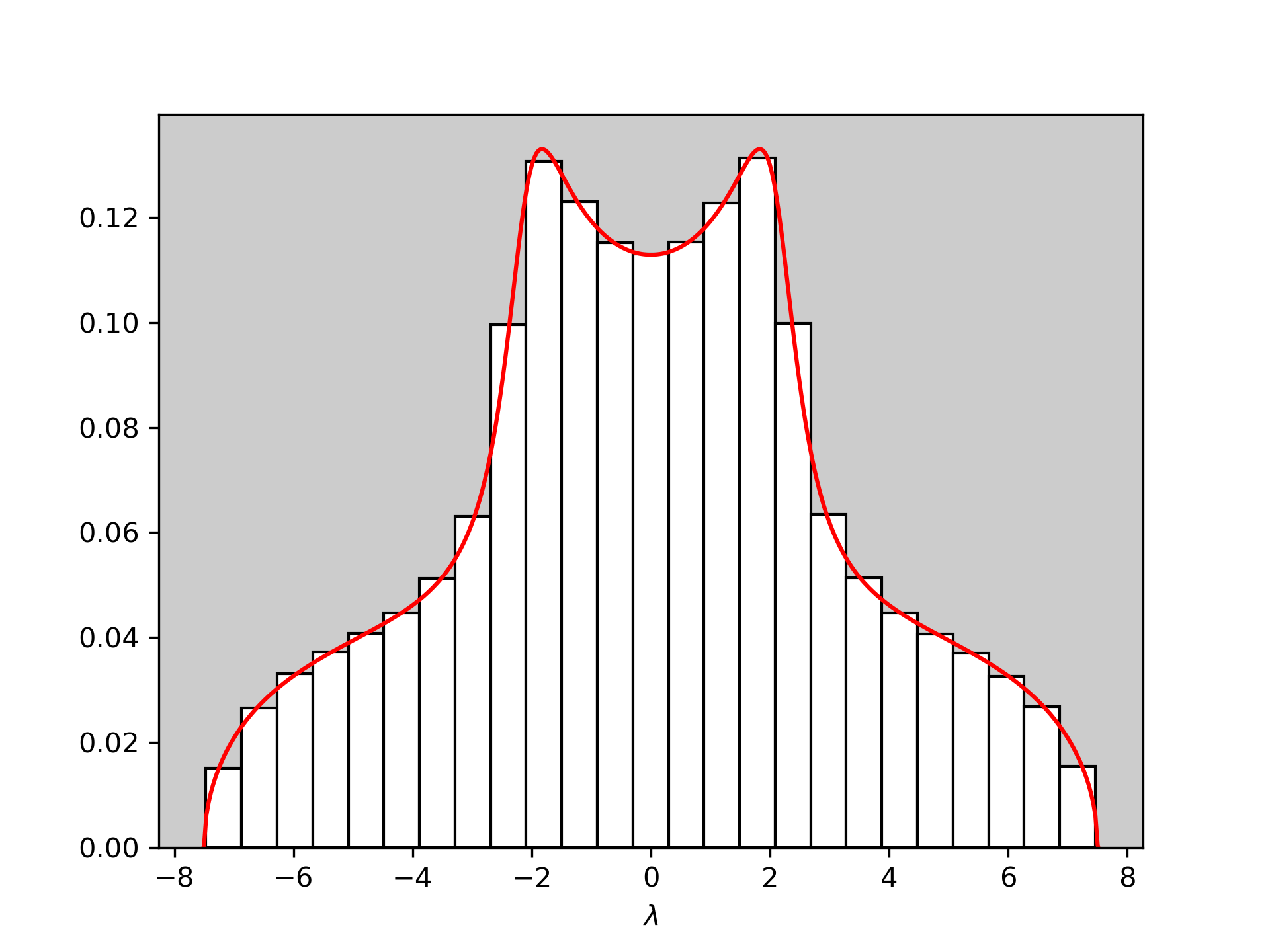}
     \caption{Spectral Density}
     \label{sub:adj_spec}
 \end{subfigure}
 \hfill
 \begin{subfigure}[b]{0.3\textwidth}
     \centering
     \includegraphics[width=\textwidth]{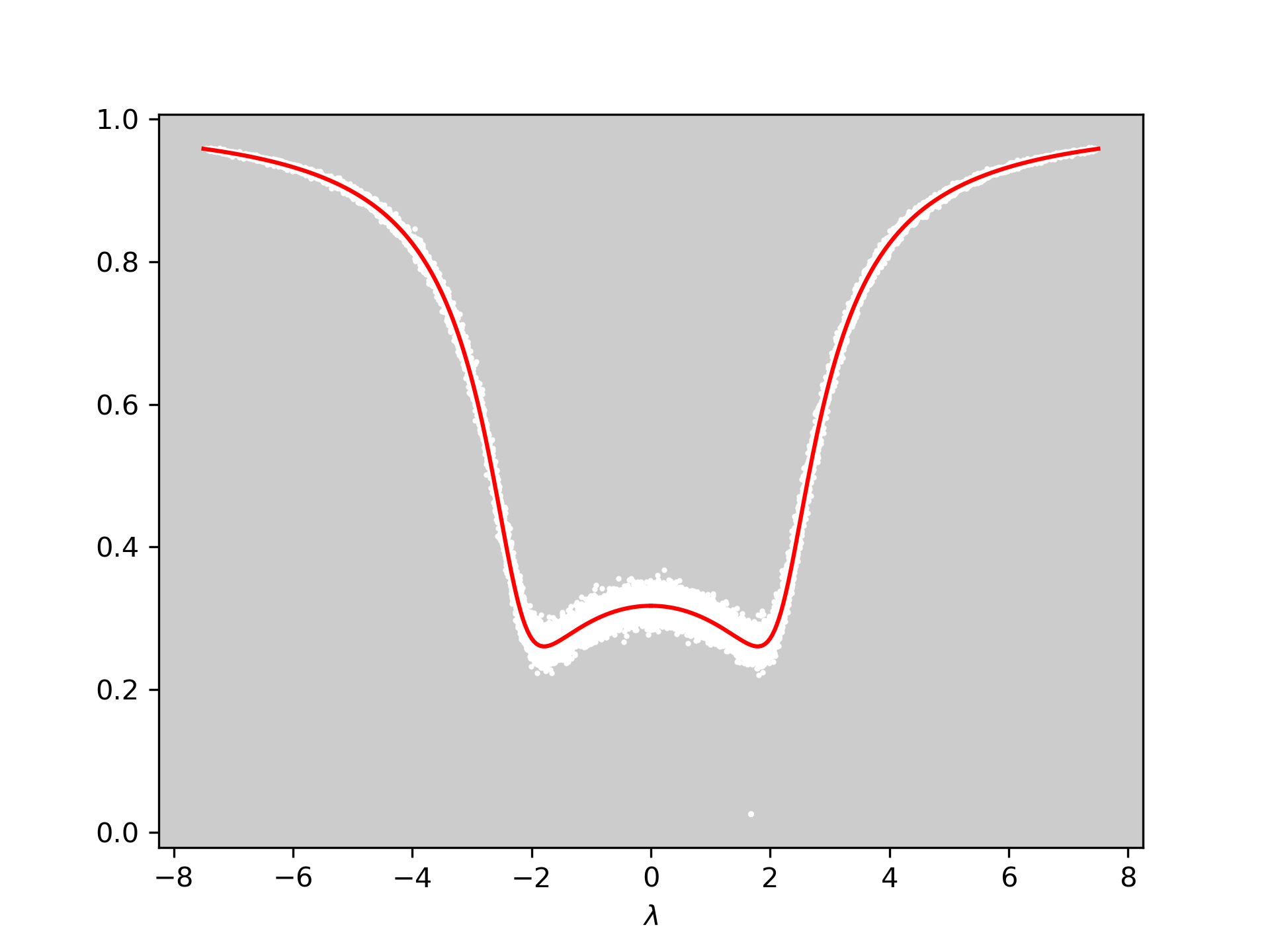}
     \caption{Sum of Square-Coord. $V_1$}
     \label{sub:adj_1}
 \end{subfigure}
 \hfill
 \begin{subfigure}[b]{0.3\textwidth}
     \centering
     \includegraphics[width=\textwidth]{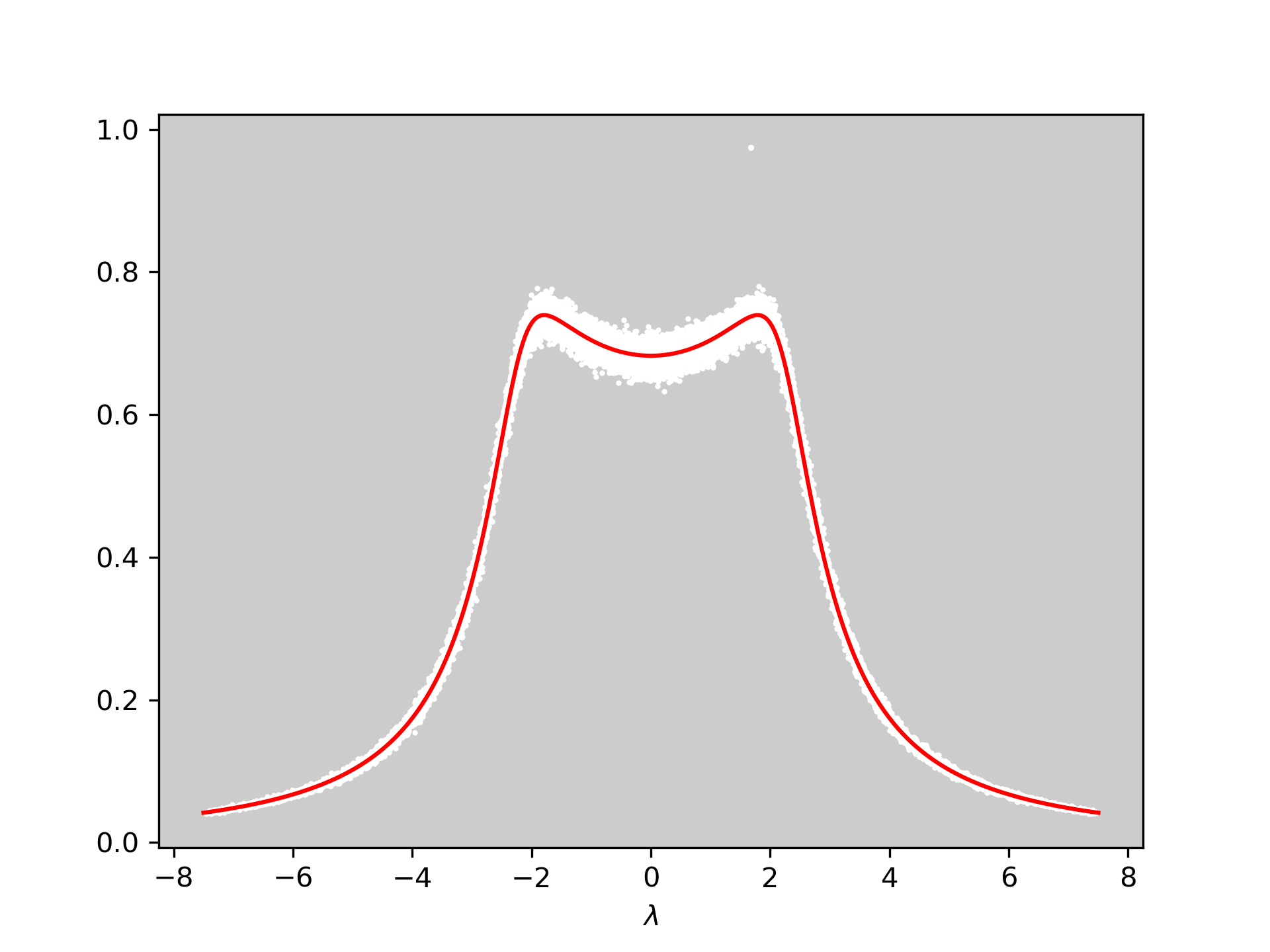}
     \caption{Sum of Square-Coord. $V_2$}
     \label{sub:adj_2}
 \end{subfigure}
    \caption{Adjacency Matrix}
    \label{fig:adj_matrix}
\end{figure}

\begin{figure}
 \centering
 \begin{subfigure}[b]{0.3\textwidth}
     \centering
     \includegraphics[width=\textwidth]{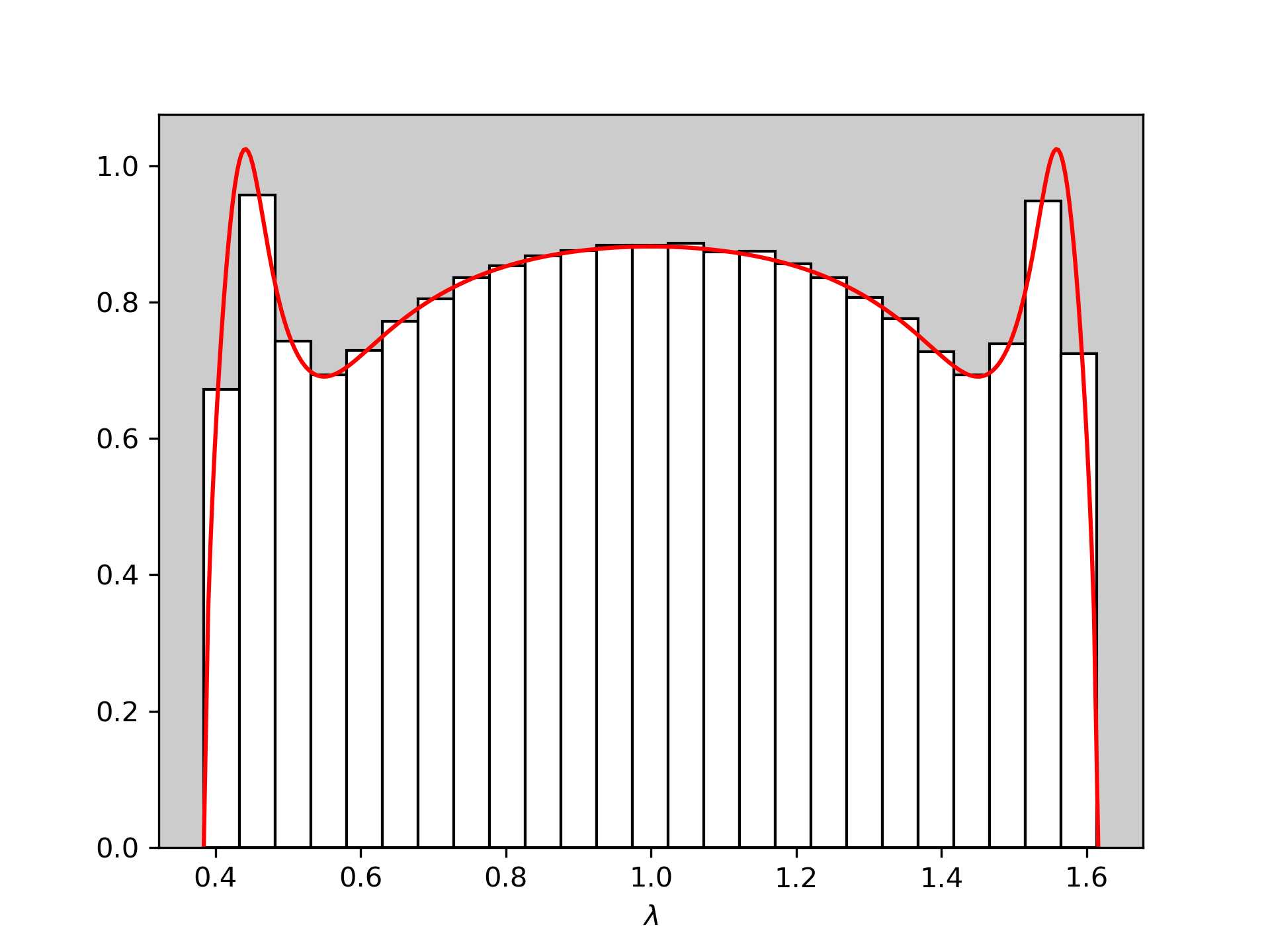}
     \caption{Spectral Density}
     \label{sub:lap_spec}
 \end{subfigure}
 \hfill
 \begin{subfigure}[b]{0.3\textwidth}
     \centering
     \includegraphics[width=\textwidth]{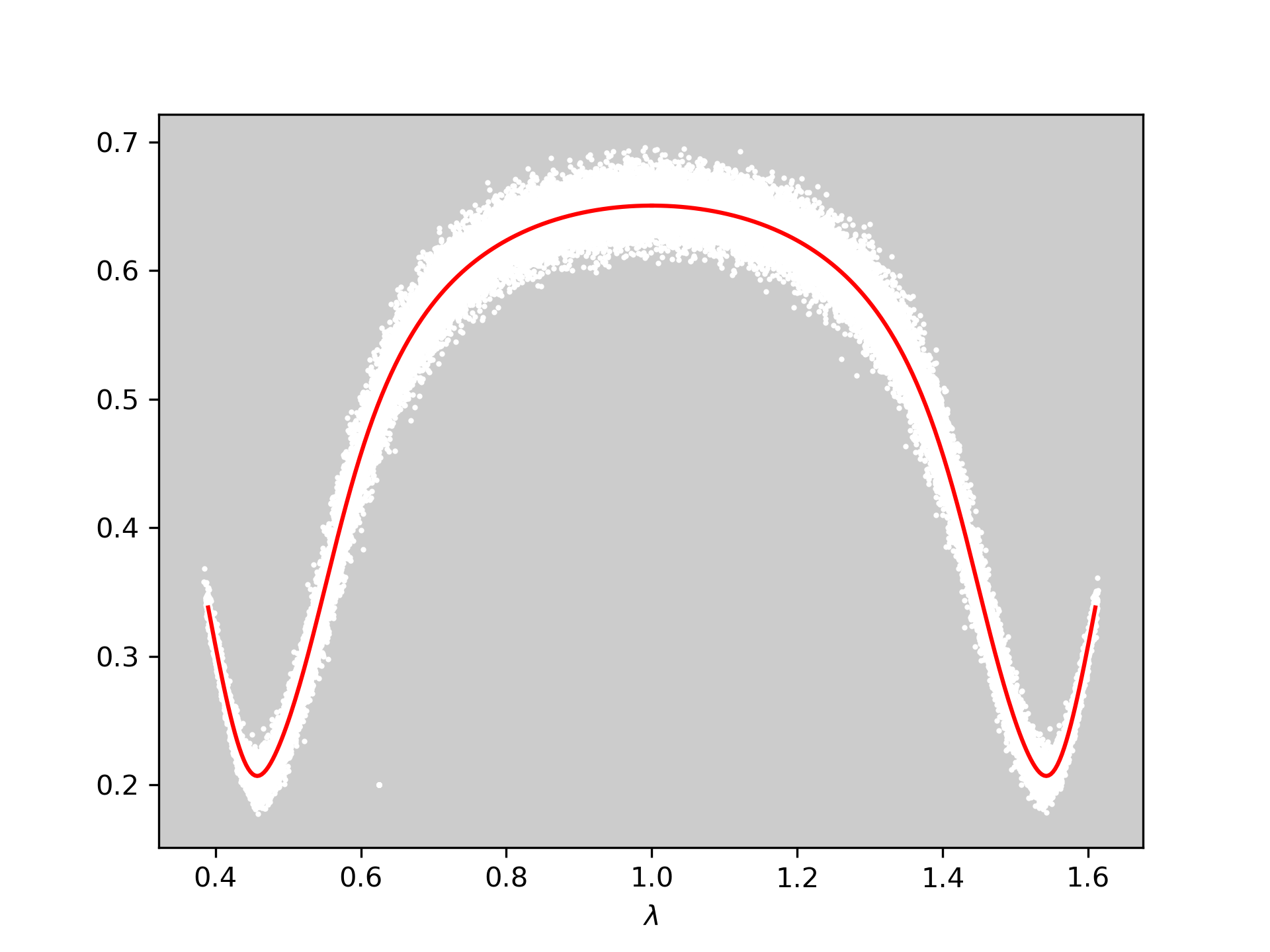}
     \caption{Sum of Square-Coord. $V_1$}
     \label{sub:lap_1}
 \end{subfigure}
 \hfill
 \begin{subfigure}[b]{0.3\textwidth}
     \centering
     \includegraphics[width=\textwidth]{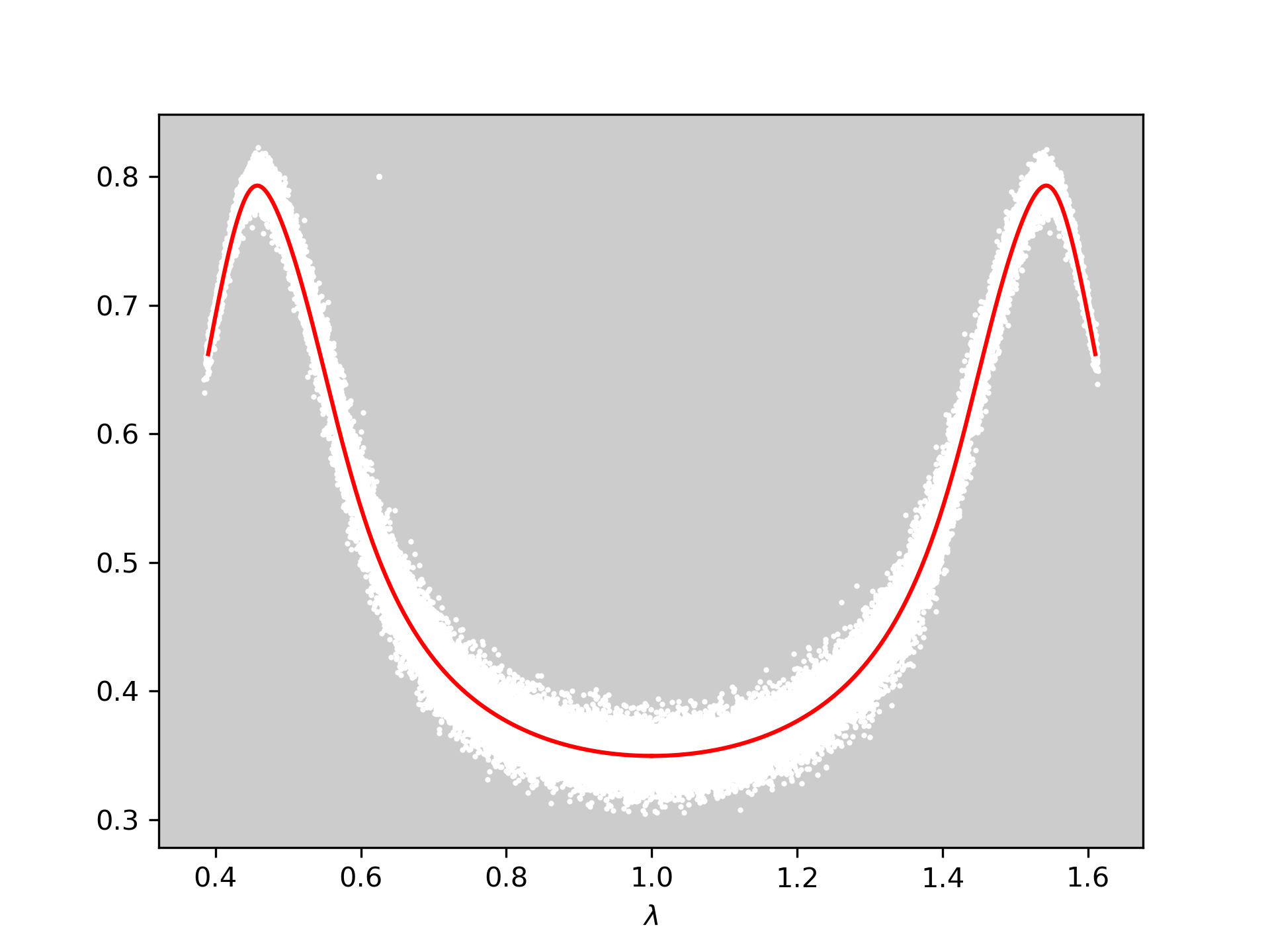}
     \caption{Sum of Square-Coord. $V_2$}
     \label{sub:lap_2}
 \end{subfigure}
    \caption{Normalized Laplacian}
    \label{fig:lap_mat}
\end{figure}

\Cref{fig:House} contains similar plots, this time for \[
S = \begin{bmatrix}
    0 & 1 & 1 & 0 & 0\\
    1 & 0 & 1 & 1 & 0\\
    1 & 1 & 0 & 0 & 1\\
    0 & 1 & 0 & 0 & 1\\
    0 & 0 & 1 & 1 & 0
\end{bmatrix},
\] whose graphical representation is the house graph. We consider the adjacency matrix of these graphs. The equitable partition given by $S$ contains a coarser equitable partition with matrix \[
T = \begin{bmatrix}
    0 & 2 & 0\\
    1 & 1 & 1\\
    0 & 1 & 1
\end{bmatrix},
\] and so there are only three unique spectral measures instead of five on $\T_S$. Interestingly, because of the structure of the house graph, there is a bijection between closed cycles starting in $V_2$ and closed cycles starting in $V_3$ (as well as a bijection between closed cycles starting in $V_4$ and those starting in $V_5$). Because of this, the sums over $V_2$ and $V_3$ are identical (as are the sums over $V_4$ and $V_5$).

\begin{figure}
    \centering
    \begin{subfigure}[b]{.3\textwidth}
        \centering
        \includegraphics[width=\textwidth]{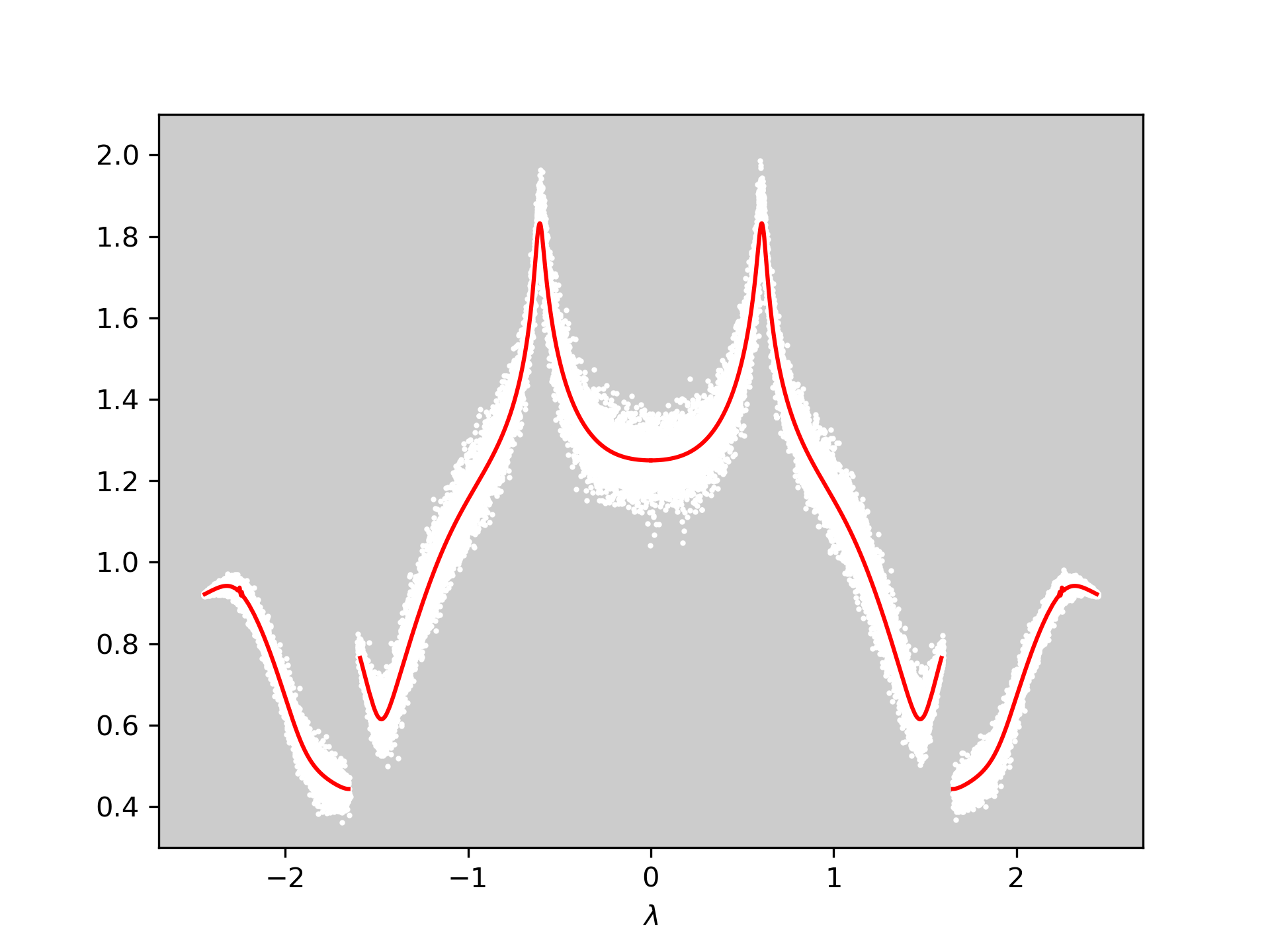}
        \caption{Sum of Coord. over $V_1$}
        \label{sub:house_1}
    \end{subfigure}
    \hfill
    \begin{subfigure}[b]{.3\textwidth}
        \centering
        \includegraphics[width=\textwidth]{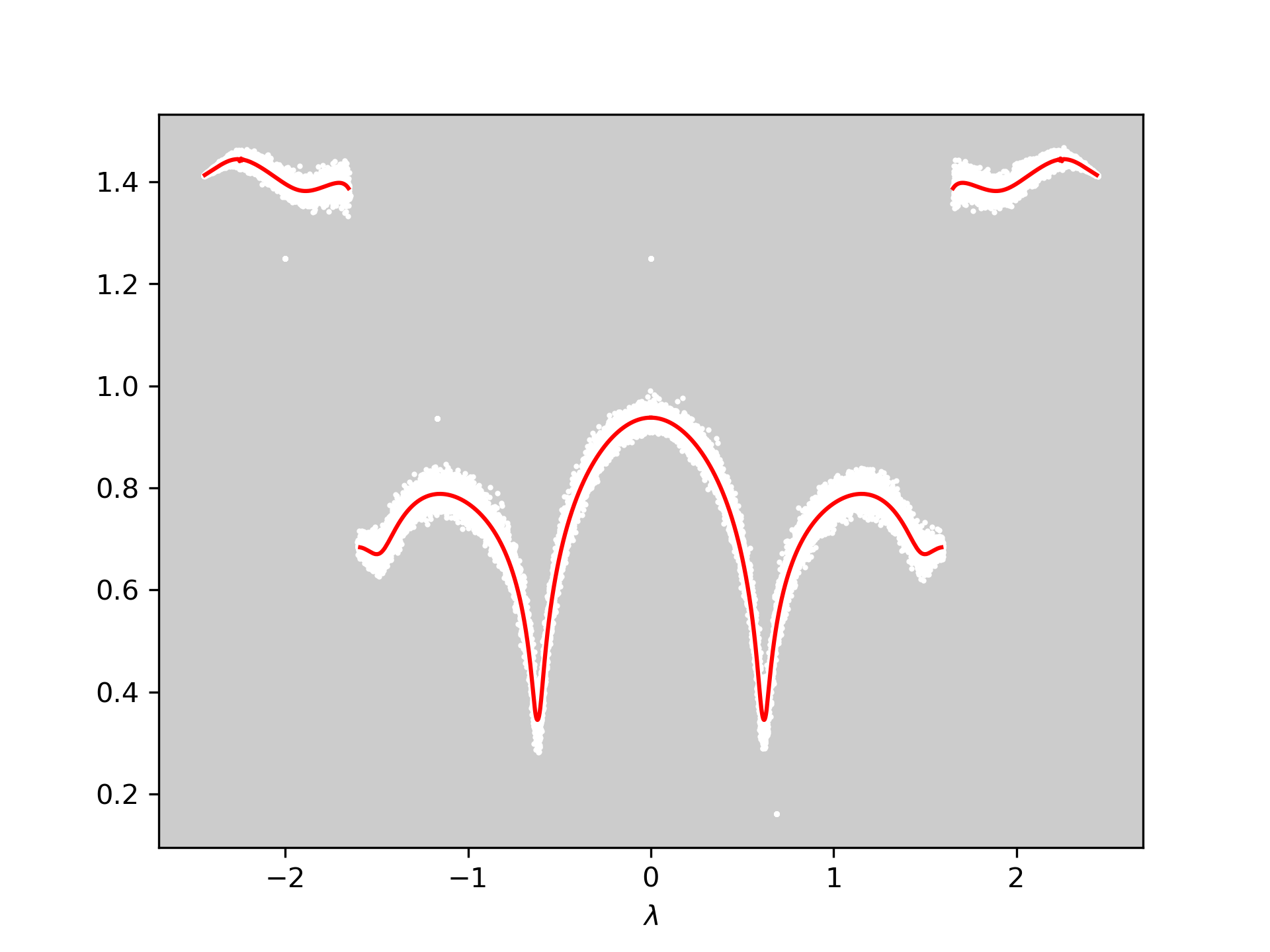}
        \caption{Sum of Coord. $V_2$ and $V_3$}
        \label{sub:house_2}
    \end{subfigure}
    \hfill
    \begin{subfigure}[b]{.3\textwidth}
        \centering
        \includegraphics[width=\textwidth]{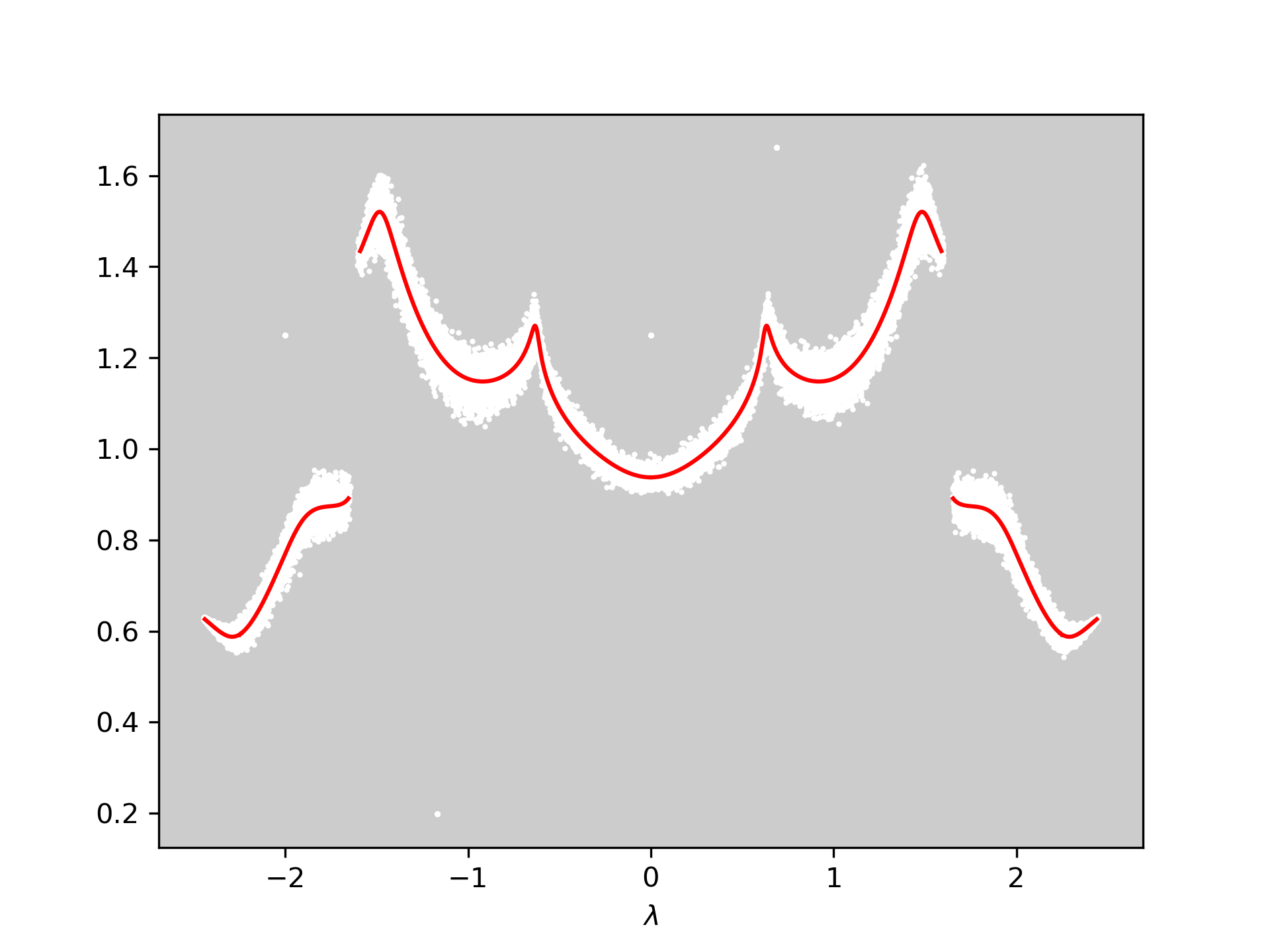}
        \caption{Sum of Coord. $V_4$ and $V_5$}
        \label{sub:house_3}
    \end{subfigure}
    \hfill
    \begin{subfigure}[b]{.45\textwidth}
        \centering
        \includegraphics[width=\textwidth]{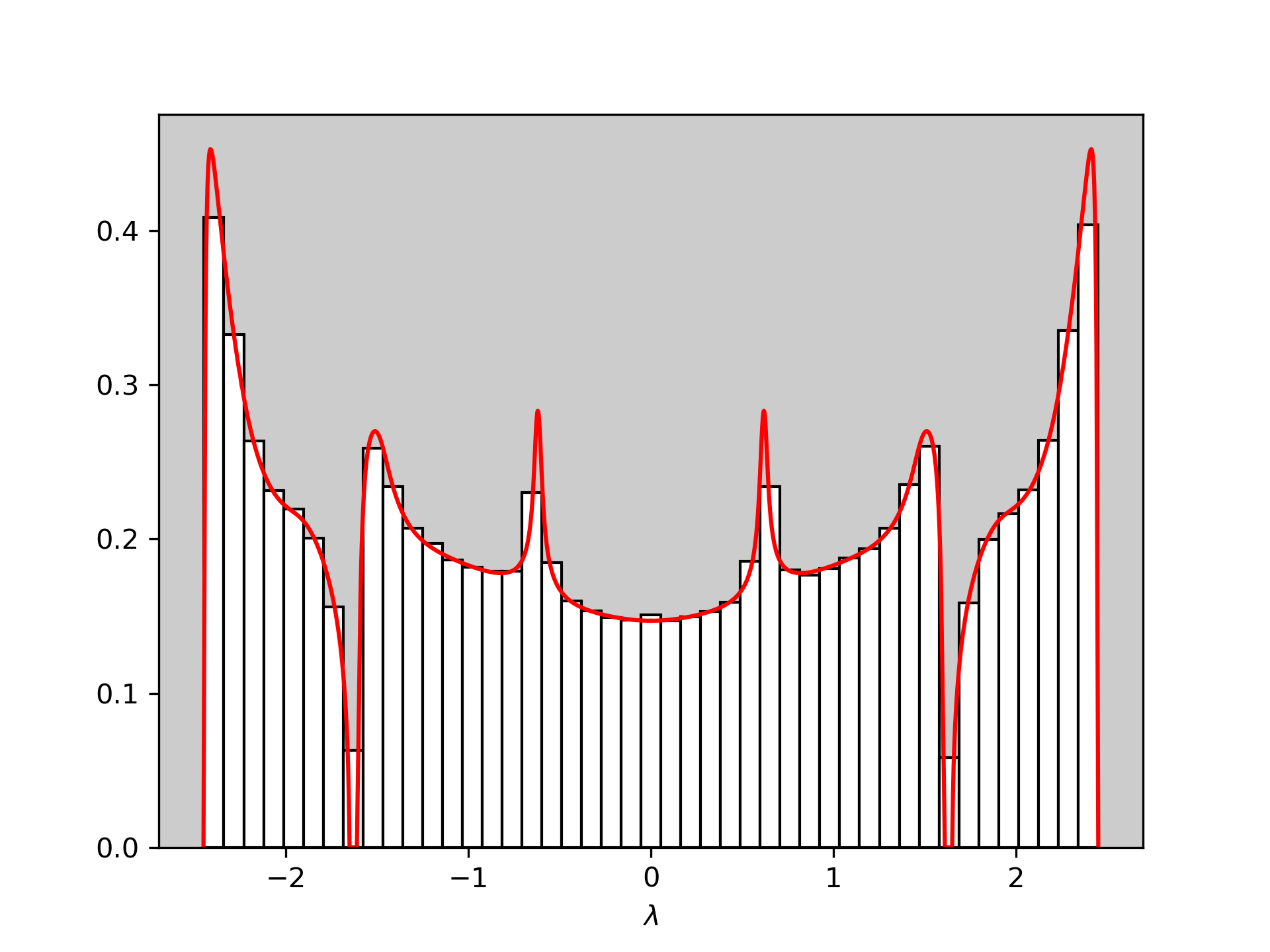}
        \caption{Spectral Density}
        \label{sub:house_spec}
    \end{subfigure}
    \caption{Average Eigenvector Coordinate over Partition Cells and Spectral Density}
    \label{fig:House}
\end{figure}
\end{example}

\section{Eigenvalues of the Adjacency Matrix}
\label{sec:eig}

The eigenvalues of an $S$-regular graph behave very similarly to the eigenvalues of a $d$-regular graph. For a $d$-regular graph the all-ones vector $\mathbf{1}$ is an eigenvector with eigenvalue $d$. From the perspective of $S$-regularity, this is due to the fact that the one-dimensional vector 1 is an eigenvector of the $1 \times 1$ matrix $d$, whose eigenvalue is $d$. In the following proposition we generalize this intuition showing that the eigenvalues of $S$ appear as eigenvalues of the adjacency matrix of an $S$-regular graph.

For $d$-regular graphs, $d$ is the largest eigenvalue of the adjacency matrix. Again, from the perspective of $S$-regularity this is because $d$ is the largest eigenvalue of the $1 \times 1$ matrix $d$. We generalize this fact by showing that if $\lambda_S$ is the largest eigenvalue of $S$, then $\lambda_S$ is also the largest eigenvalue of the adjacency matrix of an $S$-regular graph.

\begin{proposition}
Let $\lambda_S$ be the largest eigenvalue of $S$ and let $G$ be an $S$-regular graph. The largest eigenvalue of the adjacency matrix $A$ is $\lambda_S$.
\end{proposition}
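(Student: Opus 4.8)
The plan is to prove the two inequalities $\lambda_S \le \lambda_{\max}$ and $\lambda_{\max} \le \lambda_S$, where I write $\lambda_{\max}$ for the largest eigenvalue of $A$. The first inequality is immediate from \Cref{cor:s-eig}: since $\lambda_S \in \sigma(S)$, part (2) lifts it to an eigenvalue of $A$ via the cell-constant eigenvector $\overline{\psi}$, so $\lambda_S \le \lambda_{\max}$. Here I implicitly use that $S$ has only real eigenvalues, so that ``the largest eigenvalue of $S$'' is well defined and $\lambda_S$ is a genuine real eigenvalue; this holds because, by \Cref{cor:s-eig}(1), $S$ is similar to the symmetric matrix $N^{1/2} S N^{-1/2}$.

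For the reverse inequality I would invoke the Perron--Frobenius theorem. Because $G$ is connected, $A$ is a nonnegative irreducible matrix, so by Perron--Frobenius its largest eigenvalue---which for a symmetric nonnegative matrix coincides with its spectral radius---admits a strictly positive eigenvector $\phi$. The point worth stating carefully is that it is the largest \emph{signed} eigenvalue $\lambda_{\max}$, and not merely the eigenvalue of largest modulus, that carries this positive eigenvector: for an adjacency matrix of a connected graph one always has $\lambda_{\max}$ equal to the spectral radius (whether or not $G$ is bipartite), so $\phi > 0$ is indeed an eigenvector for $\lambda_{\max}$.

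The key step is then to show that $\lambda_{\max}$ must itself lie in $\sigma(S)$. Suppose, for contradiction, that it does not. Then \Cref{cor:s-eig}(3) applies to the eigenvector $\phi$ and forces $\sum_{v \in V_i}\phi(v) = 0$ for every cell $V_i$. But $\phi$ has strictly positive entries, so each cell-sum is strictly positive---a contradiction. Hence $\lambda_{\max} \in \sigma(S)$, and since $\lambda_S$ is by definition the largest element of $\sigma(S)$, we obtain $\lambda_{\max} \le \lambda_S$. Combining this with the first inequality gives $\lambda_{\max} = \lambda_S$.

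I do not expect a serious obstacle here: the argument is essentially a one-line contradiction between the positivity of the Perron eigenvector and the mean-zero-over-cells property of \Cref{cor:s-eig}(3). The only delicate bookkeeping is the Perron--Frobenius bookkeeping described above, namely verifying that the positive eigenvector attaches to $\lambda_{\max}$ rather than to an eigenvalue of large absolute value, so that the contradiction is applied to the correct eigenvalue. Everything else is a direct appeal to \Cref{cor:s-eig}.
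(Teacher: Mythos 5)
Your argument is correct, and it reaches the same conclusion by a slightly different route than the paper. Both proofs get the lower bound $\lambda_S \le \lambda_{\max}$ by lifting an eigenvector of $S$ via \Cref{cor:s-eig}(2), and both hinge on a Perron--Frobenius positivity-versus-orthogonality clash for the upper bound; the difference is where the orthogonality comes from. The paper applies Perron--Frobenius to $S$ to produce a strictly positive lifted eigenvector $\phi$ of $A$ with eigenvalue $\lambda_S$, then pairs it against a positive \emph{left} Perron eigenvector $\psi$ of $A$ for a hypothetical eigenvalue $\lambda > \lambda_S$, and derives the contradiction from $\lambda\,\psi^T\phi = \psi^T A \phi = \lambda_S\,\psi^T\phi$ with $\psi^T\phi > 0$. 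You instead apply Perron--Frobenius to $A$ itself and route the orthogonality through \Cref{cor:s-eig}(3): if $\lambda_{\max} \notin \sigma(S)$ then the strictly positive Perron eigenvector would have vanishing cell sums, which is impossible since each $V_i$ is nonempty. Your version buys a marginally sharper intermediate conclusion --- that $\lambda_{\max}$ literally lies in $\sigma(S)$, not merely $\lambda_{\max} \le \lambda_S$ --- and avoids any appeal to left eigenvectors, reusing machinery the paper has already established; the paper's version is self-contained in that it does not need part (3) of the corollary. Your bookkeeping on the delicate points (reality of $\sigma(S)$ via similarity to $N^{1/2}SN^{-1/2}$, and the fact that the positive eigenvector attaches to $\lambda_{\max}$ rather than to $-\lambda_{\min}$ because the spectral radius of a symmetric nonnegative irreducible matrix is its largest eigenvalue) is accurate and, if anything, more careful than the paper's, which asserts a positive left eigenvector for an arbitrary $\lambda > \lambda_S$ when this is only justified for the Perron root of $A$.
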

\begin{proof}
By the Perron-Frobenius Theorem $\lambda_S$ is a positive real number with positive eigenvector $\phi_S$. By \Cref{prop:s-eigen} $\phi_S$ extends to a positive eigenvector $\phi$ of $A$ with eigenvalue $\lambda_S$ such that $\phi(v) = \phi_S(\tau(v))$.
Assume $\lambda > \lambda_S$ is an eigenvalue of $A$. Again, by Perron-Frobenius there exists a positive left eigenvector $\psi$ with value $\lambda$.
Now, $\lambda \psi^T \phi = \psi^T A \phi = \lambda_S \psi^T \phi$. This is a contradiction since $\psi^T \phi \neq 0$ by positivity, and $\lambda > \lambda_S$.
\end{proof}

\Cref{prop:s-eigen} characterized $k$ of the $n$ eigenvectors and eigenvalues of the adjacency matrix of an $S$-regular graph. We call the $k$ eigenvectors $\phi_1,\dots,\phi_k$ the \emph{$S$-eigenvectors} and their corresponding eigenvalues $\lambda_{S_1},\dots,\lambda_{S_k}$ the \emph{$S$-eigenvalues}.
We call the remaining $n-k$ eigenvectors $\phi_{k+1}, \dots, \phi_n$ the \emph{bulk eigenvectors} and their corresponding eigenvalues the \emph{bulk eigenvalues}. Note that our ordering $\phi_1,\dots,\phi_k,\phi_{k+1},\dots,\phi_n$ does not respect the magnitude of the eigenvalues. A bulk eigenvalue may be larger than an $S$-eigenvalue. By $\lambda_S$ we denote the largest $S$-eigenvalue, and by $\lambda_B$ we denote the largest magnitude of a bulk eigenvalue. 
In a $d$-regular graph the bulk eigenvectors are orthogonal to $\mathbf{1}$ since $\mathbf{1}$ is the eigenvector corresponding to $d$. \Cref{prop:s-eigen} shows the bulk eigenvectors are orthogonal to $\mathbf{1}$ as well as the indicator vectors $\one_{V_i}$ for each cell in the equitable partition. Note that for $d$-regular graphs $\one$ is the indicator vector for the single cell in the equitable partition given by $d$.

We put our observations on $S$-regular graphs to work by proving an upper bound on the largest eigenvalue of the adjacency matrix of a graph obtained by removing a subset of vertices from an $S$-regular graph. This theorem generalizes an analogous upper bound for $d$-regular graphs found in~\cite{10.5555/3002498}.

{\begin{theorem}\label{thm:eigen_remove}
Let $G$ be an $S$-regular graph and $C\subset V$. Further let $A_{\bar{C}}$ be the adjacency matrix of the subgraph induced by the complement of $C$. The maximum eigenvalue of $A_{\bar{C}}$ is bounded above by \[\lambda_S \left(1-\min_{1\leq i\leq k}\{c_i\}\right) + \lambda_B \left(\min_{1\leq i\leq k}\{c_i\}\right).\]
\end{theorem}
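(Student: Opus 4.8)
The plan is to run a variational argument on the top eigenvalue of $A_{\bar C}$, pushing the quadratic form through the eigenspace decomposition supplied by \Cref{prop:s-eigen}. First I would let $x$ be a unit-norm top eigenvector of $A_{\bar C}$ and extend it to $\tilde x\in\R^{|V|}$ by setting $\tilde x(v)=0$ for $v\in C$. Since $\tilde x$ vanishes on $C$, every edge incident to $C$ contributes nothing to the quadratic form, so $\mu := \lambda_{\max}(A_{\bar C}) = \tilde x^{T} A \tilde x$ with $\|\tilde x\|=1$. Writing $c_i = |C\cap V_i|/n_i$ for the fraction of cell $V_i$ deleted, the goal is to bound this Rayleigh quotient by $\lambda_S(1-\min_i c_i)+\lambda_B\min_i c_i$.

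Next I would split $\tilde x = x_{\mathrm{cell}} + x_{\mathrm{bulk}}$, where $x_{\mathrm{cell}}$ is the orthogonal projection onto the space $U=\operatorname{span}\{\one_{V_1},\dots,\one_{V_k}\}$ of vectors constant on each cell, so that $x_{\mathrm{cell}}$ equals the cell-average $\tfrac1{n_i}\sum_{v\in V_i}\tilde x(v)$ on $V_i$, while $x_{\mathrm{bulk}}\in U^\perp$ is the mean-zero-per-cell part. By \Cref{prop:s-eigen}, $U$ is spanned by the $S$-eigenvectors and $U^\perp$ by the bulk eigenvectors; both are $A$-invariant, so the cross terms vanish and $\tilde x^T A\tilde x = x_{\mathrm{cell}}^T A x_{\mathrm{cell}} + x_{\mathrm{bulk}}^T A x_{\mathrm{bulk}}$. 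On $U$ the operator $A$ has eigenvalues among the $S$-eigenvalues, whose maximum is $\lambda_S$, while on $U^\perp$ every eigenvalue has magnitude at most $\lambda_B$; hence $x_{\mathrm{cell}}^T A x_{\mathrm{cell}}\le \lambda_S\|x_{\mathrm{cell}}\|^2$ and $x_{\mathrm{bulk}}^T A x_{\mathrm{bulk}}\le \lambda_B\|x_{\mathrm{bulk}}\|^2$. Setting $q=\|x_{\mathrm{cell}}\|^2$ and using $\|x_{\mathrm{bulk}}\|^2 = 1-q$ gives $\mu\le \lambda_S q + \lambda_B(1-q)$. Since $\lambda_S$ is the largest eigenvalue of the nonnegative irreducible matrix $A$, Perron--Frobenius makes $\lambda_S$ its spectral radius, so $\lambda_S\ge \lambda_B$ and the right-hand side is nondecreasing in $q$; it therefore suffices to produce a good upper bound on $q$.

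The crux is to show $q\le 1-\min_i c_i$. Because $\tilde x$ is supported on $\bar C$, the cell sum $\sum_{v\in V_i}\tilde x(v)$ ranges over only $n_i(1-c_i)$ nonzero coordinates, so Cauchy--Schwarz yields $\big(\sum_{v\in V_i}\tilde x(v)\big)^2 \le n_i(1-c_i)\sum_{v\in V_i}\tilde x(v)^2$. Dividing by $n_i$ and summing over $i$ gives $q=\sum_i \tfrac1{n_i}\big(\sum_{v\in V_i}\tilde x(v)\big)^2 \le \sum_i (1-c_i)\sum_{v\in V_i}\tilde x(v)^2 \le (1-\min_i c_i)\|\tilde x\|^2 = 1-\min_i c_i$. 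Substituting into $\mu\le \lambda_S q+\lambda_B(1-q)$ and invoking monotonicity in $q$ produces the claimed bound. I expect the main obstacle to be essentially bookkeeping: confirming that the cell-constant and bulk subspaces are exactly the $A$-invariant pieces of \Cref{prop:s-eigen} so that the quadratic form splits cleanly, and tracking the per-cell Cauchy--Schwarz so that the coarsest cell -- the one minimizing $c_i$ -- controls the bound. The one genuinely load-bearing inequality is the sign condition $\lambda_S\ge\lambda_B$, which converts the upper bound on $q$ into an upper bound on $\mu$ and is the single place where nonnegativity of $A$ is essential.
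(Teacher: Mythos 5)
Your proposal is correct and follows essentially the same route as the paper: extend the top eigenvector of $A_{\bar C}$ by zero, decompose it into its cell-constant part (the span of the $\one_{V_i}$, equivalently the $S$-eigenvectors) and its bulk part, bound the quadratic form on each piece by $\lambda_S$ and $\lambda_B$ respectively, and use per-cell Cauchy--Schwarz on the at most $n_i(1-c_i)$ nonzero coordinates to get $\|x_{\mathrm{cell}}\|^2\le 1-\min_i c_i$, finishing with $\lambda_S\ge\lambda_B$. The only cosmetic difference is that you phrase the split as an orthogonal projection onto invariant subspaces rather than expanding in the explicit basis $\one_{V_1},\dots,\one_{V_k},\phi_{k+1},\dots,\phi_n$, which if anything states the step $x_{\mathrm{cell}}^T A x_{\mathrm{cell}}\le\lambda_S\|x_{\mathrm{cell}}\|^2$ a bit more cleanly.
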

\begin{proof}
By \Cref{prop:s-eigen} there is a basis $\one_{V_i},\dots,\one_{V_k},\phi_{k+1},\dots,\phi_n$ where $\one_{V_i}$ is the indicator vector for the $i$th cell in the equitable partition, scaled by $1/\sqrt{n_i}$, and $\phi_i$ is a bulk eigenvector. Note that this is indeed a basis since the $S$-eigenvectors can be written as linear combinations of the indicator vectors. Moreover, by appropriately scaling the bulk eigenvectors it is an orthonormal basis.
Let $x$ be a unit vector such that $x(v) = 0$ for $v \in C$, { $x_i = \sum_{v\in V_i}x(v)^2$}, and write it as a linear combination $x = \sum_{i=1}^k \alpha_i \one_{V_i} + \sum_{i=k+1}^n \alpha_i \phi_i$.
By taking the inner product of $x$ and $\one_{V_i}$ for $1\leq i\leq k$ we compute
\[
\alpha_i = \sum_{v \in (V \setminus C)_i} \frac{x(v)}{\sqrt{n_i}} 
\leq \left(\sum_{v \in (V \setminus C)_i} x(v)^2 \right)^{1/2} \left( \frac{(1 - c_i)n_i}{n_i} \right)^{1/2} 
= \sqrt{x_i}\sqrt{1 - c_i}.
\]
{and so,
\[
\sum_{i=1}^k \alpha_i^2 \leq \sum_{i=1}^k x_i(1-c_i) \leq \left(1-\min_{1\leq i \leq k}\{c_i\}\right) \sum_{i=1}^k x_i = 1 - \min_{1\leq i \leq k}\{c_i\}.
\]}
Finally, we bound the largest eigenvalue of $A_{\bar{C}}$ by
\begin{align*}
x^T A x &= \sum_{i=1}^k \alpha_i^2\one_{V_i}^T A \one_{V_i} + \sum_{i=k+1}^n \lambda_i \alpha_i^2 \\
&\leq \lambda_S \sum_{i=1}^k \alpha_i^2 + \lambda_B \sum_{i=k+1}^n \alpha_i^2 \\
&\leq \lambda_S \sum_{i=1}^k \alpha_i^2 + \lambda_B \left(1 - \sum_{i=1}^k \alpha_i^2\right) \\
&\leq \lambda_S \left(1-\min_{1\leq i\leq k}\{c_i\}\right) + \lambda_B \left(\min_{1\leq i\leq k}\{c_i\}\right),
\end{align*}
since $\lambda_S \geq \lambda_B$.
\end{proof}}

Note that for a $d$-regular graph the bound reduces to $(1-c)d + c\lambda_B$, for $|C| = cn$, which matches the bound obtained in~\cite{10.5555/3002498}.
An immediate corollary of \Cref{thm:eigen_remove} provides an upper bound on the number of walks in $G$ that avoid the vertices in $C$.

\begin{corollary}
Let $G$ be an $S$-regular graph and $C\subset V$ and $m=|V|-|C|$. The total number of walks of length $\ell$ that avoid $C$ is bounded above by $$m\left(\lambda_S \left(1-\min_{1\leq i\leq k}\{c_i\}\right) + \lambda_B \left(\min_{1\leq i\leq k}\{c_i\}\right)\right)^\ell .$$
\end{corollary}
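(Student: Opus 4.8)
The plan is to reduce the count to a quadratic form in the adjacency matrix of the induced subgraph and then control it through the spectral radius bound already established in \Cref{thm:eigen_remove}. A walk of length $\ell$ in $G$ avoids $C$ precisely when all of its vertices lie in $\bar{C}$, i.e. exactly when it is a walk of length $\ell$ in the subgraph induced on $\bar{C}$. Writing $A_{\bar C}$ for the adjacency matrix of that subgraph (an $m \times m$ nonnegative symmetric matrix, with $m = |V| - |C|$), the total number of such walks is the sum of all entries of $A_{\bar{C}}^\ell$, which I would record as $\mathbf{1}^T A_{\bar{C}}^\ell \mathbf{1}$, where $\mathbf{1}$ is the all-ones vector in $\R^m$. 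This turns the combinatorial claim into the analytic task of bounding a single quadratic form.

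Next I would diagonalize. Since $A_{\bar C}$ is real symmetric it admits an orthonormal eigenbasis $u_1,\dots,u_m$ with real eigenvalues $\mu_1 \geq \dots \geq \mu_m$. Expanding $\mathbf{1} = \sum_{j=1}^m a_j u_j$ gives $\sum_j a_j^2 = \|\mathbf{1}\|^2 = m$ and
\[
\mathbf{1}^T A_{\bar{C}}^\ell \mathbf{1} = \sum_{j=1}^m \mu_j^\ell\, a_j^2.
\]
Setting $\Lambda = \lambda_S\left(1-\min_{1\leq i\leq k}\{c_i\}\right) + \lambda_B\left(\min_{1\leq i\leq k}\{c_i\}\right)$, it suffices to show $\mu_j^\ell \leq \Lambda^\ell$ for every $j$, since then the sum is at most $\Lambda^\ell \sum_j a_j^2 = m\Lambda^\ell$, which is the asserted bound.

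The step I expect to be the crux is establishing $\mu_j^\ell \leq \Lambda^\ell$ for all $j$ simultaneously, rather than only for the top eigenvalue $\mu_1$ that \Cref{thm:eigen_remove} directly controls. For odd $\ell$ the negative eigenvalues contribute negatively and are harmless, but for even $\ell$ I need $|\mu_j| \leq \Lambda$, which requires a bound on the most negative eigenvalue as well. I would resolve this by observing that $A_{\bar C}$ is a nonnegative symmetric matrix, so by the Perron--Frobenius theorem its spectral radius $\max_j |\mu_j|$ is attained at its largest eigenvalue, namely $\mu_1$. Since \Cref{thm:eigen_remove} gives $\mu_1 \leq \Lambda$, we obtain $|\mu_j| \leq \mu_1 \leq \Lambda$ for every $j$, and hence $\mu_j^\ell \leq |\mu_j|^\ell \leq \Lambda^\ell$ regardless of the parity of $\ell$ (noting $\Lambda \geq 0$ as $\lambda_S \geq \lambda_B \geq 0$). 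This closes the argument.
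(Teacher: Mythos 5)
Your proof is correct and follows essentially the same route as the paper: identify the count with $\mathbf{1}^T A_{\bar{C}}^\ell \mathbf{1}$, expand $\mathbf{1}$ in an orthonormal eigenbasis of $A_{\bar{C}}$, and bound the resulting sum by $m\Lambda^\ell$ via \Cref{thm:eigen_remove}. You are in fact slightly more careful than the paper, which silently uses $\gamma_j^\ell \le \gamma_1^\ell$ for all $j$; your Perron--Frobenius observation that the nonnegative symmetric matrix $A_{\bar{C}}$ satisfies $|\mu_j| \le \mu_1$ is precisely the justification that step needs when $\ell$ is even.
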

\begin{proof}
Let $\psi_1,\dots,\psi_m$ be an orthonormal basis of eigenvectors for $A_{\bar{C}}$ with eigenvalues $\gamma_1 \geq\dots \geq \gamma_m$.
The total number of walks of length $\ell$ in $G$ avoiding $C$ is given by $\one^T A_{\bar{C}}^\ell \mathbf{1}$. By writing $\mathbf{1} = \sum_{i=1}^m \beta_i \gamma_i$ as a linear combination of eigenvectors we obtain 
\begin{align*}
\one^T A_{\bar{C}}^\ell \mathbf{1} &= \sum_{i=1}^m \beta_i^2 \gamma_i^\ell\\ 
&\leq \gamma_1^\ell \sum_{i = 1}^m \beta_i^2\\
&= \gamma_1^\ell m\\
&\leq m\left(\lambda_S \left(1-\min_{1\leq i\leq k}\{c_i\}\right) + \lambda_B \left(\min_{1\leq i\leq k}\{c_i\}\right)\right)^\ell .
\end{align*}
\end{proof}

\subsection{Expander Mixing Lemma}
The expander mixing lemma is a statement about the second largest eigenvalue of a $d$-regular graph. Intuitively, the expander mixing lemma says that for any two subsets $B, C \subseteq V$ the difference between the actual number of edges between $B$ and $C$ and the expected number of edges between $B$ and $C$ depends on the second largest eigenvalue of the adjacency matrix. In a sense, it tells us that the larger the spectral gap the more a $d$-regular graph looks like a random $d$-regular graph. For more details on the expander mixing lemma in $d$-regular graphs see~\cite{Hoory2006}. Variants of the expander mixing lemma have been proven for biregular graphs~\cite{DeWinter2012}, and matrix-weighted graphs~\cite{Hansen2021}.
In this section we show three bounds for the expander mixing lemma in $S$-regular graphs, each depending on different parameters. The proof of each bound showcases a different generalization of a proof technique from $d$-regular graphs to $S$-regular graphs. In the $S$-regular expander mixing theorem the largest bulk eigenvalue $\lambda_B$ appears in the bound. This is expected since the second largest eigenvalue of a $d$-regular graph is the largest bulk eigenvalue when viewing the $d$-regular graph as an $S$-regular graph.

Before we state the expander mixing lemma for $S$-regular graphs we need the expected number of edges two sets of vertices in a random $S$-regular graph. The following lemma tells us this expected value.

\begin{lemma}
Let $G$ be a random $S$-regular graph and let $B,C \subseteq V$ be two sets of vertices. The expected number of edges between $B$ and $C$ is \[\mathbb{E}[|E(B,C)|] = \sum_{i=1}^k \sum_{j=1}^k \sqrt{\frac{s_{ij} s_{ji}}{n_i n_j}}|B_i||C_j|. \]
\end{lemma}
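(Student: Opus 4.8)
The plan is to reduce everything to linearity of expectation and a single per-pair edge probability in the configuration model, then sum over cells. First I would write the edge count as a bilinear form, $|E(B,C)| = \mathbf{1}_B^T A \,\mathbf{1}_C = \sum_{u\in B}\sum_{w\in C} A_{uw}$, so that $\mathbb{E}[|E(B,C)|] = \sum_{u\in B,\,w\in C}\mathbb{E}[A_{uw}]$. Since every vertex lies in exactly one cell, I would group the double sum according to the cells of $u$ and $w$, giving $\mathbb{E}[|E(B,C)|] = \sum_{i,j}\sum_{u\in B_i,\,w\in C_j}\mathbb{E}[A_{uw}]$. By the symmetry of the configuration model within a cell, $\mathbb{E}[A_{uw}]$ depends only on the cells $i$ and $j$ (not on the particular vertices), so it suffices to evaluate it once per cell-pair and multiply by $|B_i||C_j|$.

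The heart of the argument is this per-pair computation. For $i\neq j$, each vertex of $V_i$ carries $s_{ij}$ half-edges directed toward $V_j$, and the $s_{ij}n_i$ such half-edges are matched to the $s_{ji}n_j = s_{ij}n_i$ half-edges of $V_j$ by the uniform random permutation $\pi_{ij}$. Fixing $u\in V_i$ and $w\in V_j$, each of $u$'s $s_{ij}$ half-edges has image uniform over all $s_{ij}n_i$ half-edges on the $V_j$ side, so it lands on one of $w$'s $s_{ji}$ half-edges with probability $s_{ji}/(s_{ij}n_i) = 1/n_j$; summing over $u$'s half-edges gives $\mathbb{E}[A_{uw}] = s_{ij}/n_j$. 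I would then symmetrize using the balance equation $s_{ij}n_i = s_{ji}n_j$, which gives $s_{ji}/n_i = s_{ij}/n_j$ and hence $s_{ij}/n_j = \sqrt{(s_{ij}/n_j)(s_{ji}/n_i)} = \sqrt{s_{ij}s_{ji}/(n_in_j)}$, exactly the coefficient in the statement. Substituting back, $\mathbb{E}[|E(B,C)|] = \sum_{i,j}\sqrt{s_{ij}s_{ji}/(n_in_j)}\,|B_i||C_j|$, as claimed.

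The main obstacle is the diagonal term $i=j$, where the intra-cell graph is a random $s_{ii}$-regular graph built from a uniform random pairing of $s_{ii}n_i$ half-edges rather than a bijection between two disjoint half-edge sets. Here a fixed half-edge of $u$ matches a fixed half-edge of $w$ with probability $1/(s_{ii}n_i-1)$, yielding $\mathbb{E}[A_{uw}] = s_{ii}^2/(s_{ii}n_i-1)$, which equals the desired $s_{ii}/n_i$ only to leading order in $n_i$; I would handle this either by taking the configuration-model expectation before conditioning on simplicity (so that the off-diagonal value $s_{ij}/n_j$ is exact) or by noting the $O(1/n_i)$ discrepancy is negligible in the regime of interest. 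Two minor points deserve a sentence each: the ordered-pair convention implicit in the bilinear form $\mathbf{1}_B^T A\,\mathbf{1}_C$ (so edges inside $B\cap C$ are counted with multiplicity two), and the fact that conditioning on a simple sample only perturbs these edge statistics by lower-order terms, which is justified since the probability of sampling a simple graph stays bounded away from zero.
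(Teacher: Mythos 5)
Your proof takes essentially the same route as the paper's: linearity of expectation over vertex pairs grouped by cells, the per-pair edge probability $s_{ij}/n_j$, and the balance equation $s_{ij}n_i = s_{ji}n_j$ to rewrite it as $\sqrt{s_{ij}s_{ji}/(n_in_j)}$. The paper simply asserts $\Pr[(u,v)\in E(B_i,C_j)] = s_{ij}/n_j$ without the half-edge computation, so your more careful treatment of the configuration model---in particular the observation that the diagonal case $i=j$ and the conditioning on simplicity only yield this value up to lower-order corrections---is a refinement of a step the paper glosses over rather than a different approach.
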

\begin{proof}
Consider the two subsets $B_i = B \cap V_i$ and $C_j = C \cap V_j$ and the bipartite graph obtained by the vertices $B_i \cup C_j \subseteq V$ and the edges $E(B_i, C_j) \subseteq E$.
Now for $u \in B_i$ and $v \in C_j$ the probability of an edge between them is
\[ Pr[(u, v) \in E(B_i, C_j)] = \frac{s_{ij}}{n_j} = \frac{s_{ji}}{n_i} = \sqrt{\frac{s_{ij}s_{ji}}{n_i n_j}} \] where the equality follows from the balance equation. By linearity of expectation we have \[\mathbb{E}[|E(B_i, C_i)|] = \sqrt{\frac{s_{ij}s_{ji}}{n_i n_j}}|B_i||C_j|.\]
Again by linearity of expectation, summing over all cells in the equitable partition we obtain \[\mathbb{E}[|E(B,C)|] = \sum_{i=1}^k \sum_{j=1}^k \sqrt{\frac{s_{ij} s_{ji}}{n_i n_j}}|B_i||C_j|. \]
\end{proof}

Note that when $G$ is a $d$-regular graph our expression reduces to $\mathbb{E}[|E(B,C)|] = \frac{d}{n}|B||C|$ which is the expected number of edges between two sets of vertices in a random $d$-regular graph.
Our next theorem is related to the expander mixing lemma. For a subset $B$ with $|B_i| = b_in_i$ it provides a bound on a quantity related to $\left| |N_{B_i}(v)| - b_i s_{\tau(v)i} \right|$ which is the difference between the actual number of neighbors $v$ has in $B_i$ and the expected number of neighbors $v$ has in $B_i$. As a corollary we obtain a variant of the expander mixing lemma. An analogous result for $d$-regular graphs can be found in~\cite{10.5555/3002498}.

\begin{theorem}\label{thm:eigen}
For an $S$-regular graph $G$ and a set of vertices $B \subseteq V$ we have the inequality \[ \sum_{v \in V} \sum_{i=1}^k \left( \left| N_{B_i}(v) \right| - b_i s_{\tau(v)i} \right)^2 \leq \lambda_B^2 \sum_{i=1}^k b_i(1 - b_i)n_i.\]
\end{theorem}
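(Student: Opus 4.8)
The plan is to rewrite the left-hand side as a sum of squared norms $\|Aw_i\|^2$ for suitable vectors $w_i$, and then to show that each $w_i$ lives entirely in the bulk eigenspace, so that the factor $\lambda_B^2$ emerges from the usual eigenvalue estimate.

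First I would rephrase the two quantities inside the square in terms of $A$. For a fixed cell $i$, the number of neighbors of $v$ in $B_i$ is $|N_{B_i}(v)| = (A\one_{B_i})(v)$, while the expected count is $b_i s_{\tau(v)i} = b_i (A\one_{V_i})(v)$, using that every vertex in $V_{\tau(v)}$ has exactly $s_{\tau(v)i}$ neighbors in $V_i$, so that $s_{\tau(v)i} = (A\one_{V_i})(v)$. Setting $w_i := \one_{B_i} - b_i\one_{V_i}$, the inner difference becomes $(Aw_i)(v)$, and summing the square over all $v \in V$ collapses the left-hand side to $\sum_{i=1}^k \|Aw_i\|^2$.

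The key step is to verify that each $w_i$ is orthogonal to the span of the cell indicator vectors $\{\one_{V_j}\}_{j=1}^k$. For $j\neq i$ this is immediate, since $B_i\subseteq V_i$ forces $\langle\one_{B_i},\one_{V_j}\rangle = \langle\one_{V_i},\one_{V_j}\rangle = 0$; for $j=i$ one computes $\langle \one_{B_i},\one_{V_i}\rangle = |B_i| = b_i n_i = b_i\langle\one_{V_i},\one_{V_i}\rangle$, so the two contributions cancel. By \Cref{cor:s-eig}, the $S$-eigenvectors are constant on cells and $k$ in number, so their span is exactly $\mathrm{span}\{\one_{V_j}\}_{j=1}^k$; hence $w_i$ is orthogonal to every $S$-eigenvector and lies in the span of the bulk eigenvectors. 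Expanding $w_i = \sum_{m=k+1}^n c_m\phi_m$ in an orthonormal basis of bulk eigenvectors with eigenvalues $\lambda_m$ and applying $A$ gives $\|Aw_i\|^2 = \sum_m c_m^2\lambda_m^2 \leq \lambda_B^2\sum_m c_m^2 = \lambda_B^2\|w_i\|^2$, since $|\lambda_m|\leq\lambda_B$ for every bulk eigenvalue.

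Finally I would compute $\|w_i\|^2$ directly: the $(1-b_i)n_i$ vertices of $V_i\setminus B_i$ each contribute $b_i^2$ and the $b_i n_i$ vertices of $B_i$ each contribute $(1-b_i)^2$, so $\|w_i\|^2 = (1-b_i)n_i\,b_i^2 + b_i n_i(1-b_i)^2 = b_i(1-b_i)n_i$. Summing over $i$ then yields the stated bound. The only substantive point is the orthogonality claim, which is where the equitable-partition structure genuinely enters via \Cref{cor:s-eig}; once $w_i$ is placed in the bulk, everything else is routine bookkeeping.
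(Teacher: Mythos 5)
Your proof is correct, and it takes a subtly but genuinely different route from the paper's. The paper works with the single aggregated vector $x = \one_B - \sum_{i=1}^k b_i\one_{V_i}$, notes it is orthogonal to the $S$-eigenvectors, and applies $\langle Ax,Ax\rangle \leq \lambda_B^2\langle x,x\rangle$; you instead split into the $k$ vectors $w_i = \one_{B_i} - b_i\one_{V_i}$ (so $x=\sum_i w_i$), verify each $w_i$ individually is orthogonal to the cell indicators, and bound $\sum_i\|Aw_i\|^2$. The two right-hand sides agree because the $w_i$ have disjoint supports, so $\|x\|^2=\sum_i\|w_i\|^2$; but the left-hand sides do not: for the single vector one has $\langle Ax,Ax\rangle = \sum_{v}\bigl(\sum_{i}(|N_{B_i}(v)|-b_i s_{\tau(v)i})\bigr)^2$ with the cell sum \emph{inside} the square, whereas the theorem (and your $\sum_i\|Aw_i\|^2$) places the square inside the cell sum. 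The paper's displayed identity for $\langle Ax,Ax\rangle$ silently interchanges these, which is not an equality in general; your per-cell decomposition is the version of the argument that actually delivers the stated inequality, at the mild cost of checking orthogonality $k$ times instead of once. The remaining ingredients --- identifying the span of the $S$-eigenvectors with $\mathrm{span}\{\one_{V_j}\}$ via \Cref{cor:s-eig}, the bulk-eigenvalue estimate, and the computation $\|w_i\|^2=b_i(1-b_i)n_i$ --- coincide with the paper's.
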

\begin{proof}
Define the vector $x$ such that $x(v) = 1 - b_i$ if $v \in B_i$ and $x(v) = -b_i$ if $v \in (V \setminus B)_i$. Since $x$ is orthogonal to the $S$-eigenvectors we have $\langle Ax, Ax \rangle \leq \lambda^2_B \langle x, x \rangle$.
The right hand side is equal to \[\lambda^2_B \langle x, x \rangle = \lambda^2_B \sum_{i=1}^k b_i n_i (1 - b_i)^2 + (1 - b_i)n_ib_i^2 = \lambda_B^2 \sum_{i=1}^k b_i(1-b_i)n_i,\] and the left hand side is equal to \[\langle Ax, Ax \rangle = \sum_{v\in V} \sum_{i=1}^k \left( \left(1 - b_i \right) \left| N_{B_i}(v) \right| - b_i \left( s_{c(v),i} - N_{B_i}(v) \right)\right)^2 = \sum_{v\in V} \sum_{i=1}^k \left( \left| N_{B_i}(v) \right| - b_i s_{c(v), i} \right)^2. \]
\end{proof}

\begin{corollary}
For an $S$-regular graph with sets of vertices $B, C \subseteq V$ we have the inequality \[ \left| \left|E(B, C)\right| - \sum_{i=1}^k \sum_{j=1}^k \sqrt{\frac{s_{ij} s_{ji}}{n_i n_j}}|B_i||C_j| \right| \leq \lambda_B n \left(b \sum_{i=1}^k c_i(1-c_i) \right)^{1/2}.\]
\end{corollary}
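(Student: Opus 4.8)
The plan is to realize the edge count as a quadratic form and split it along the orthogonal decomposition of $\R^n$ into the cell-constant subspace and the bulk subspace. First I would write $|E(B,C)| = \mathbf{1}_B^T A\,\mathbf{1}_C$ (reading $E(B,C)$ as ordered incident pairs, so overlaps are handled consistently with the preceding lemma). By \Cref{cor:s-eig} the span of the cell indicators $\{\mathbf{1}_{V_i}\}_{i=1}^k$ coincides with the span of the $S$-eigenvectors and is $A$-invariant; since $A$ is symmetric, its orthogonal complement, the bulk subspace spanned by $\phi_{k+1},\dots,\phi_n$, is $A$-invariant as well. Decompose $\mathbf{1}_B = p_B + q_B$ and $\mathbf{1}_C = p_C + q_C$, where $p_B$ is the orthogonal projection onto the cell-constant subspace and $q_B$ lies in the bulk (and likewise for $C$). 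A direct computation gives $p_B = \sum_i b_i \mathbf{1}_{V_i}$ and $p_C = \sum_j c_j \mathbf{1}_{V_j}$, while $q_B$ is precisely the vector $x$ used in the proof of \Cref{thm:eigen} (namely $q_B(v)=1-b_i$ on $B_i$ and $-b_i$ on $V_i\setminus B$).

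Because each summand lies in an $A$-invariant subspace orthogonal to the other, the two cross terms vanish, so $\mathbf{1}_B^T A\,\mathbf{1}_C = p_B^T A p_C + q_B^T A q_C$. The next step is to identify the first term with the expected edge count. Using $\mathbf{1}_{V_i}^T A\,\mathbf{1}_{V_j} = n_i s_{ij}$ together with $|B_i| = b_i n_i$ and $|C_j| = c_j n_j$, the balance equation $n_i s_{ij} = n_j s_{ji}$ turns $\sum_{i,j} b_i c_j\, n_i s_{ij}$ into exactly $\sum_{i,j}\sqrt{s_{ij}s_{ji}/(n_i n_j)}\,|B_i||C_j|$; that is, $p_B^T A p_C = \E[|E(B,C)|]$ from the preceding lemma. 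Hence the left-hand side of the claim equals $|q_B^T A q_C|$.

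It then remains to bound the bulk term. Since $q_C$ lies in the bulk, \Cref{thm:eigen} (equivalently, $\|A q_C\|\le \lambda_B\|q_C\|$) together with Cauchy--Schwarz gives $|q_B^T A q_C| \le \|q_B\|\,\|A q_C\| \le \lambda_B\|q_B\|\,\|q_C\|$. A short calculation yields $\|q_B\|^2 = \sum_i b_i(1-b_i)n_i$ and $\|q_C\|^2 = \sum_i c_i(1-c_i)n_i$. To reach the stated (asymmetric) form I would apply two crude estimates: on the $B$-side, $1-b_i\le 1$ gives $\|q_B\|^2 \le \sum_i b_i n_i = |B| = bn$ (so here $b=|B|/n$), and on the $C$-side, $n_i\le n$ gives $\|q_C\|^2 \le n\sum_i c_i(1-c_i)$. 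Multiplying produces $\lambda_B n\big(b\sum_i c_i(1-c_i)\big)^{1/2}$, as claimed.

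I expect the main obstacle to be the bookkeeping in the second step: verifying that the projection term $p_B^T A p_C$ reproduces the square-root expression $\sqrt{s_{ij}s_{ji}/(n_i n_j)}$ exactly, which is where the balance equations are essential and where the asymmetry of the final bound originates. The orthogonal splitting and the final Cauchy--Schwarz step are routine once the $A$-invariance of the cell-constant subspace (from \Cref{prop:s-eigen} and \Cref{cor:s-eig}) and \Cref{thm:eigen} are in hand; the only care needed there is noting that the two slack inequalities $1-b_i\le 1$ and $n_i\le n$ are precisely what collapse the symmetric estimate $\lambda_B\|q_B\|\|q_C\|$ into the stated asymmetric form.
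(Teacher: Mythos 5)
Your proof is correct, but it takes a genuinely different route from the paper's. The paper derives this corollary directly from \Cref{thm:eigen}: it writes $|E(B,C)|-\sum_{i,j} n_i b_i s_{ij} c_j$ as a sum over $v\in B$ of the neighbor-count deviations $|N_{C_i}(v)|-c_i s_{\tau(v),i}$, applies Cauchy--Schwarz over the $|B|=bn$ terms to pull out the factor $\sqrt{bn}$, and then invokes the inequality of \Cref{thm:eigen} (applied to $C$) to bound the resulting sum of squares by $\lambda_B^2\sum_i c_i(1-c_i)n_i$ --- a vertex-level, combinatorial argument. You instead use the orthogonal decomposition $\mathbf{1}_B = p_B+q_B$, $\mathbf{1}_C=p_C+q_C$ along the $A$-invariant cell-constant and bulk subspaces, kill the cross terms, identify $p_B^TAp_C$ with the expected edge count via the balance equations, and bound the bulk term by $\lambda_B\|q_B\|\,\|q_C\| = \lambda_B\bigl(\sum_i b_i(1-b_i)n_i\bigr)^{1/2}\bigl(\sum_i c_i(1-c_i)n_i\bigr)^{1/2}$. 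This is in fact the strategy the paper deploys later for its sharpest expander-mixing variant, and your intermediate bound coincides with that theorem's bound $\lambda_B\bigl(\sum_{i,j}|B_i||C_j|(1-b_i)(1-c_j)\bigr)^{1/2}$, which is stronger than the corollary being proved; your two slack estimates $1-b_i\le 1$ and $n_i\le n$ then correctly collapse it to the stated asymmetric form. So your route proves more and specializes, at the cost of bypassing the illustration of how \Cref{thm:eigen} itself yields an expander-mixing statement, which is the pedagogical point of the paper's version.
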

\begin{proof}
By \Cref{thm:eigen} we have \[\sum_{v \in B} \sum_{i=1}^k \left( \left| N_{B_i}(v) \right| - b_i s_{c(v),i} \right)^2 \leq \sum_{v \in V} \sum_{i=1}^k \left( \left| N_{B_i}(v) \right| - b_i s_{c(v),i} \right)^2 \leq \lambda^2 \sum_{i=1}^k b_i(1 - b_i)n_i\] for any $B \subset V$.
By Cauchy-Schwarz we have
\begin{align*}
\left| |E(B, C)| - \sum_{i=1}^k \sum_{j=1}^k \sqrt{\frac{s_{ij} s_{ji}}{n_i n_j}}|B_i||C_j| \right| & =
\left| |E(B, C)| - \sum_{i=1}^k \sum_{j=1}^k n_i b_i s_{ij} c_j \right| \\
&\leq \sum_{v \in B} \sum_{i = 1}^k \left| |N_{C_i}(v)| - c_j s_{c(v),i} \right| \\
&\leq \sqrt{bn} \left( \sum_{v \in B}\left( \sum_{i = 1}^k |N_{C_i}(v)| - c_j s_{c(v),i} \right)^2 \right)^{1/2}\\
&\leq \sqrt{bn} \left( \lambda^2 \sum_{i=1}^k c_i(1-c_i) n_i \right)^{1/2}\\
&\leq \lambda n \left( b \sum_{i=1}^k c_i(1-c_i) \right)^{1/2}.
\end{align*}
\end{proof}

Our next proof of the expander mixing lemma is a generalization of the most well-known version of the lemma for $d$-regular graphs.

\begin{theorem}\label{thm:eml-original}
For an $S$-regular graph $G$ and sets of vertices $B, C \subseteq V$ we have the inequality \[ \left| |E(B, C)| - \sum_{i=1}^k \sum_{j=1}^k \sqrt{\frac{s_{ij} s_{ji}}{n_i n_j}}|B_i| |C_j| \right| \leq \lambda_B \sqrt{|B||C|}. \]
\end{theorem}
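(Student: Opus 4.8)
The plan is to mimic the classical spectral proof of the expander mixing lemma, replacing the single distinguished eigenvector $\mathbf{1}$ of the $d$-regular case by the entire span of the cell-indicator vectors. First I would write the (ordered) edge count as the bilinear form $|E(B,C)| = \mathbf{1}_B^T A\, \mathbf{1}_C$, and recall from \Cref{cor:s-eig} that $A$ admits an orthonormal eigenbasis splitting into the $S$-eigenvectors, which are constant on each cell and therefore span $U := \operatorname{span}\{\mathbf{1}_{V_1},\dots,\mathbf{1}_{V_k}\}$, and the bulk eigenvectors, which are orthogonal to every $\mathbf{1}_{V_i}$ and hence span $U^\perp$. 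Let $P$ denote orthogonal projection onto $U$.

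Since $U$ is spanned by eigenvectors of the symmetric matrix $A$, both $U$ and $U^\perp$ are $A$-invariant, so $A$ commutes with $P$. Decomposing $\mathbf{1}_B = P\mathbf{1}_B + (I-P)\mathbf{1}_B$ and likewise for $\mathbf{1}_C$, the two cross terms vanish because $PA(I-P) = AP(I-P) = 0$, leaving
\[
\mathbf{1}_B^T A\, \mathbf{1}_C = (P\mathbf{1}_B)^T A\, (P\mathbf{1}_C) + \big((I-P)\mathbf{1}_B\big)^T A\, \big((I-P)\mathbf{1}_C\big).
\]
The bound then reduces to identifying the first summand with the stated expected value and controlling the second by $\lambda_B$.

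For the main term, I would use that $\{n_i^{-1/2}\mathbf{1}_{V_i}\}$ is an orthonormal basis of $U$, giving $P\mathbf{1}_B = \sum_i \frac{|B_i|}{n_i}\mathbf{1}_{V_i}$ and similarly $P\mathbf{1}_C = \sum_j \frac{|C_j|}{n_j}\mathbf{1}_{V_j}$. Combining this with $\mathbf{1}_{V_i}^T A\, \mathbf{1}_{V_j} = n_i s_{ij}$ (the ordered edge count between cells) and the balance equation $n_i s_{ij} = n_j s_{ji}$, which rewrites $\frac{s_{ij}}{n_j} = \sqrt{\frac{s_{ij}s_{ji}}{n_in_j}}$, yields exactly $\sum_{i,j}\sqrt{\frac{s_{ij}s_{ji}}{n_in_j}}|B_i||C_j|$. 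For the bulk term, both $(I-P)\mathbf{1}_B$ and $(I-P)\mathbf{1}_C$ lie in $U^\perp$, where $A$ has spectral radius at most $\lambda_B$; so by Cauchy--Schwarz together with $\|A(I-P)\mathbf{1}_C\| \le \lambda_B\|(I-P)\mathbf{1}_C\|$, this term is at most $\lambda_B \|(I-P)\mathbf{1}_B\|\,\|(I-P)\mathbf{1}_C\| \le \lambda_B\|\mathbf{1}_B\|\,\|\mathbf{1}_C\| = \lambda_B\sqrt{|B||C|}$, since orthogonal projection does not increase norm.

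The only conceptual point, rather than a genuine obstacle, is recognizing that the correct generalization of ``orthogonal to $\mathbf{1}$'' is ``orthogonal to the whole cell-indicator subspace $U$,'' so that the expected-edge expression emerges precisely as the restriction of the quadratic form to $U$; everything else is the routine commuting-projection computation. The main-term identity hinges entirely on the balance equation, which is exactly what produces the symmetric factor $\sqrt{s_{ij}s_{ji}/n_in_j}$ in place of the $d/n$ of the regular case.
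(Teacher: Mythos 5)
Your proof is correct and is essentially the paper's argument in different clothing: the matrix $J$ that the paper constructs is exactly your $PAP = AP$, so the splitting $\mathbf{1}_B^T A \mathbf{1}_C = \mathbf{1}_B^T J \mathbf{1}_C + \mathbf{1}_B^T(A-J)\mathbf{1}_C$ coincides with your projection decomposition, and both proofs finish with the same spectral bound on the bulk subspace plus Cauchy--Schwarz. One small remark: because you project \emph{both} indicator vectors before applying Cauchy--Schwarz, your argument actually delivers the sharper intermediate bound $\lambda_B\|(I-P)\mathbf{1}_B\|\,\|(I-P)\mathbf{1}_C\| = \lambda_B\bigl(\sum_{i}|B_i|(1-b_i)\bigr)^{1/2}\bigl(\sum_{j}|C_j|(1-c_j)\bigr)^{1/2}$, which is precisely the refinement the paper proves as a separate theorem immediately afterward.
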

\begin{proof}
Let $J$ be the $n \times n$ block matrix such that the $(i,j)$-block is an $n_i \times n_j$ matrix with every entry set to $\frac{s_{ij}}{n_j}$. Since $s_{ij} n_i = s_{ji} n_j$ we have that $J$ is symmetric.
Moreover, we have the equality \[\left| |E(B, C)| - \sum_{i=1}^k \sum_{j=1}^k \sqrt{\frac{s_{ij} s_{ji}}{n_i n_j}} |B_i| |C_j| \right| = \left| \mathbf{1}_B^T \left( A - J \right) \mathbf{1}_C \right| \] by combining \[\mathbf{1}_B^T A \mathbf{1}_C = \left| E(B, C) \right|\] and \[ \mathbf{1}^T_B J \mathbf{1}_C = \sum_{i=1}^k \sum_{j=1}^k \sqrt{\frac{s_{ij} s_{ji}}{n_i n_j}} |B_i| |C_j|. \]

By our construction of $J$ each $S$-eigenvector $\phi_i$ is an eigenvector of $J$ with eigenvalue $\lambda_{S_i}$ since $J\phi_i(v) = S\phi_{S_i}(\tau(v)) = \lambda_{S_i} \phi_{S_i}(\tau(v)) = \lambda_{S_i} \phi_i(v).$
Moreover, each bulk eigenvector $\phi_i$ is in the kernel of $J$. To see this, note that $J$ has rank $k$ since it has $k$ unique columns and the $S$-eigenvectors $\phi_1,\dots,\phi_k$ are linearly independent. Hence, the column space of $J$ is spanned by the $S$-eigenvectors, but the bulk eigenvectors are orthogonal to the $S$-eigenvectors, so they must be in the kernel of $J$ since $J$ is symmetric.
It follows that $A$ and $J$ have the same eigenvectors which are the eigenvectors of $A - J$. The $S$-eigenvectors are eigenvectors of $A - J$ with eigenvalue zero, and each bulk eigenvector $\phi_i$ is an eigenvector of $A- J$ with eigenvalue $\lambda_i$.

To conclude the proof we upper bound $|\mathbf{1}_B^T (A - J) \mathbf{1}_C|$. By Cauchy-Schwarz we have \[\left| \mathbf{1}_B^T (A - J) \mathbf{1}_C \right| = \left| \langle \mathbf{1}_B, \left( A-J \right)\mathbf{1}_C \rangle \right| \leq \|\mathbf{1}_B\| \|(A-J)\mathbf{1}_C \|. \]
Next, by writing $\mathbf{1}_C$ as a linear combination of eigenvectors and using the fact that $(A-J)$ is symmetric we obtain the inequality
\begin{align*}
\| (A-J) \mathbf{1}_C \|^2 &= \langle (A-J) \mathbf{1}_C, (A-J) \mathbf{1}_C \rangle \\
&= \langle \mathbf{1}_C, (A - J)^2 \mathbf{1}_C \rangle \\
&= \left\langle \mathbf{1}_C, \sum_{i=k+1}^{n} (A-J)^2 \left\langle \mathbf{1}_C, \phi_i \right\rangle \phi_i \right\rangle\\
&= \sum_{i=k+1}^{n} \lambda_i^2 \left\langle \mathbf{1}_C, \phi_i \right\rangle^2\\
&\leq \lambda^2_B \|\mathbf{1}_C\|^2.
\end{align*}
Putting this all together we get $\|(A-J) \mathbf{1}_C\| \leq \lambda \|\mathbf{1}_C\|$ implying that \[ \left| |E(B,C)| - \sum_{i=1}^k \sum_{j=1}^k \frac{\sqrt{s_{ij} s_{ji}}}{\sqrt{n_i n_j}} |B_i| |C_j|\right| \leq \lambda_B \|\mathbf{1}_B\| \|\mathbf{1}_C\| = \lambda_B \sqrt{|B||C|}. \]
\end{proof}

By generalizing another well-known proof of the expander mixing lemma we obtain a slightly better bound. The bound is tighter since it includes fractional terms in the form $1 - b_i = 1 - |B_i|/n_i$ as weights on the sizes of the sets.

\begin{theorem}
For an $S$-regular graph $G$ and sets of vertices $B, C \subseteq V$ we have the inequality \[ \left| |E(B, C)| - \sum_{i=1}^k \sum_{j=1}^k \sqrt{\frac{s_{ij} s_{ji}}{n_i n_j}}|B_i| |C_j| \right| \leq \lambda_B \sqrt{\sum_{i=1}^k \sum_{j=1}^k|B_i||C_j| \left(1 - b_i \right) \left( 1 - c_j \right)}. \]
\end{theorem}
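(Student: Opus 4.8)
The plan is to reuse the identity
\[
|E(B,C)| - \sum_{i=1}^k\sum_{j=1}^k\sqrt{\frac{s_{ij}s_{ji}}{n_in_j}}|B_i||C_j| = \mathbf{1}_B^T(A-J)\mathbf{1}_C
\]
together with the spectral structure of $A-J$ established in the proof of \Cref{thm:eml-original}: the $S$-eigenvectors span the kernel of the symmetric matrix $A-J$, while each bulk eigenvector $\phi_i$ is an eigenvector of $A-J$ with eigenvalue $\lambda_i$ satisfying $|\lambda_i|\leq\lambda_B$. The entire improvement over \Cref{thm:eml-original} comes from applying Cauchy--Schwarz not to $\mathbf{1}_B$ and $\mathbf{1}_C$ directly, but to their projections onto the bulk subspace, which have strictly smaller norm.

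First I would introduce the centered vectors $\widehat{\mathbf{1}}_B = \mathbf{1}_B - \sum_{i=1}^k b_i\mathbf{1}_{V_i}$ and $\widehat{\mathbf{1}}_C = \mathbf{1}_C - \sum_{j=1}^k c_j\mathbf{1}_{V_j}$. A one-line computation gives $\langle\widehat{\mathbf{1}}_B,\mathbf{1}_{V_j}\rangle = |B_j| - b_jn_j = 0$, so $\widehat{\mathbf{1}}_B$ is orthogonal to every cell indicator and hence to the span of the $S$-eigenvectors, and likewise for $\widehat{\mathbf{1}}_C$. Because the subtracted part $\sum_i b_i\mathbf{1}_{V_i}$ lies in the $S$-eigenvector space, which is the kernel of $A-J$, and $A-J$ is symmetric, I would conclude that $\mathbf{1}_B^T(A-J)\mathbf{1}_C = \widehat{\mathbf{1}}_B^T(A-J)\widehat{\mathbf{1}}_C$.

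Next, since $\widehat{\mathbf{1}}_C$ lies entirely in the bulk subspace, the estimate $\|(A-J)\widehat{\mathbf{1}}_C\|\leq\lambda_B\|\widehat{\mathbf{1}}_C\|$ follows exactly as in \Cref{thm:eml-original}, and Cauchy--Schwarz gives $|\widehat{\mathbf{1}}_B^T(A-J)\widehat{\mathbf{1}}_C|\leq\lambda_B\|\widehat{\mathbf{1}}_B\|\,\|\widehat{\mathbf{1}}_C\|$. The last step is to evaluate the norms: since $\widehat{\mathbf{1}}_B$ takes the value $1-b_i$ on $B_i$ and $-b_i$ on $V_i\setminus B_i$, the cross terms collapse via $b_i(1-b_i)^2+(1-b_i)b_i^2 = b_i(1-b_i)$, so that
\[
\|\widehat{\mathbf{1}}_B\|^2 = \sum_{i=1}^k\left[|B_i|(1-b_i)^2+(n_i-|B_i|)b_i^2\right] = \sum_{i=1}^k|B_i|(1-b_i),
\]
and similarly $\|\widehat{\mathbf{1}}_C\|^2 = \sum_{j=1}^k|C_j|(1-c_j)$. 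Factoring the product of these two sums as a double sum yields $\|\widehat{\mathbf{1}}_B\|\,\|\widehat{\mathbf{1}}_C\| = \sqrt{\sum_{i,j}|B_i||C_j|(1-b_i)(1-c_j)}$, which is precisely the claimed bound.

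I expect no serious obstacle: the whole refinement is encoded in the single idea of centering the indicator vectors before invoking Cauchy--Schwarz. The only point that genuinely requires care is verifying that this centering leaves the bilinear form $\mathbf{1}_B^T(A-J)\mathbf{1}_C$ invariant, which rests entirely on the kernel characterization of $A-J$ carried over from the proof of \Cref{thm:eml-original}; the norm identity is then a routine collapse of the $b_i$-dependent cross terms.
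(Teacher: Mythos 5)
Your proposal is correct and is essentially the paper's own argument: both hinge on centering the indicator vectors as $\mathbf{1}_B - \sum_i b_i\mathbf{1}_{V_i}$, observing these are orthogonal to the $S$-eigenvectors, applying Cauchy--Schwarz together with the bulk bound $\lambda_B$, and collapsing the norm to $\sum_i |B_i|(1-b_i)$. The only cosmetic difference is that you route the reduction through the invariance of $\mathbf{1}_B^T(A-J)\mathbf{1}_C$ under centering (reusing $J$ from \Cref{thm:eml-original}), whereas the paper expands $\mathbf{1}_B^T A\,\mathbf{1}_C$ directly and identifies the cell-average term with the expected edge count.
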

\begin{proof}
Consider the vectors $\tilde{\mathbf{1}}_B = \mathbf{1}_B - \sum_{i=1}^k b_i \mathbf{1}_{V_i}$ and $\tilde{\mathbf{1}}_C = \mathbf{1}_C - \sum_{i=1}^k c_i \mathbf{1}_{V_i}$ which are both orthogonal to the $S$-eigenvectors since
\[ \mathbf{1}_B^T \phi_j = \sum_{i=1}^k |B_i|\phi_{S_j}(i) = \left( \sum_{i=1}^k b_i \mathbf{1}_{V_i} \right)^T \phi_j . \]

We have

\begin{align*}
\one_B^T A \one_C &= \left(\tilde{\mathbf{1}}_B + \sum_{i=1}^k b_i \mathbf{1}_{V_i}\right)^T A \left( \tilde{\mathbf{1}}_C + \sum_{i=1}^k c_i \mathbf{1}_{V_i}\right)\\
&= \left(\sum_{i=1}^k b_i \mathbf{1}_{V_i}\right)^T A \left(\sum_{i=1}^k c_i \mathbf{1}_{V_i} \right) + \left(\mathbf{1}_B - \sum_{i=1}^k b_i \mathbf{1}_{V_i} \right)^T A \left(\mathbf{1}_C - \sum_{i=1}^k c_i \mathbf{1}_{V_i}\right),
\end{align*}
since the terms $\tilde{\one}_B^T A \left( \sum_{i=1}^k c_i \one_{V_i} \right)$ and $\left( \sum_{i=1}^k b_i \one_{V_i} \right)^T A \tilde{\one}_C$ are zero. This is because $\tilde{\one}_B$ and $\tilde{\one}_C$ are orthogonal to the $S$-eigenvectors and the indicator vectors $\one_{V_i}$ are in the span of the $S$-eigenvectors.
Moreover, we have
\[\left( \sum_{i=1}^k b_i \mathbf{1}_{V_i} \right)^T A \left( \sum_{i=1}^k c_i \mathbf{1}_{V_i} \right) = \sum_{i=1}^k \sum_{j=1}^k \sqrt{\frac{s_{ij}s_{ji}}{n_i n_j}}|B_i||C_j|\] which is the expected number of edges between $B$ and $C$.
Putting these together we obtain
\begin{align*}
\left| |E(B,C)| - \sum_{i=1}^k \sum_{j=1}^k \sqrt{\frac{s_{ij}s_{ji}}{n_i n_j}}|B_i||C_j| \right| &\leq \left(\mathbf{1}_X - \sum_{i=1}^k b_i \mathbf{1}_{V_i} \right)^T A \left(\mathbf{1}_C - \sum_{i=1}^k c_i \mathbf{1}_{V_i}\right)\\
&\leq \left\|\mathbf{1}_B - \sum_{i=1}^k b_i \mathbf{1}_{V_i} \right\| \left\| A \left( \mathbf{1}_C - \sum_{i=1}^k c_i \mathbf{1}_{V_i} \right) \right\|.
\end{align*}
By the same argument as in the proof of \Cref{thm:eml-original} we obtain \[ \left\|A \left( \mathbf{1}_B - \sum_{i=1}^k b_i \mathbf{1}_{V_i} \right) \right\| \leq \lambda_B \left\|\mathbf{1}_C - \sum_{i=1}^k c_i \mathbf{1}_{V_i} \right\|. \]
Using the fact that \[\left\|\mathbf{1}_B - \sum_{i=1}^k b_i \mathbf{1}_{V_i} \right\| = \sqrt{\sum_{i=1}^k \left| B_i \right| \left( 1 - b_i \right)}\] we obtain the final result \[ \left| |E(B,C)| - \sum_{i=1}^k \sum_{j=1}^k \sqrt{\frac{s_{ij}s_{ji}}{n_i n_j}}|B_i||C_j| \right| \leq \lambda_B \sqrt{\sum_{i=1}^k \sum_{j=1}^k|B_i||C_j| \left(1 - b_i \right) \left( 1 - c_j \right)}. \]
\end{proof}

To wrap up the section, we show that the converse to the expander mixing lemma due to Bilu and Linial~\cite{Bilu2006} also implies a converse to the expander mixer lemma for $S$-regular graphs. Note the difference in assumptions. The converse assumes the two sets $B$ and $C$ are disjoint, but the statement of the expander mixing lemma allows them to have non-empty intersection.

\begin{theorem}
If $G$ is an $S$-regular graph such that for all $B, C \subset V$ with $B \cap C = \emptyset$ the inequality \[ \left| |E(B, C)| - \sum_{i=1}^k \sum_{j=1}^k \sqrt{\frac{s_{ij} s_{ji}}{n_i n_j}} |B_i| |C_j| \right| \leq \alpha \sqrt{|B||C|}\] holds, then the largest bulk eigenvalue is bounded above by $O \left ( \alpha + (\log(d_{\textrm{max}}/\alpha) + 1) \right)$ where $d_{\textrm{max}}$ is the maximum degree of $G$.
\end{theorem}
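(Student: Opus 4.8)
The plan is to recast the conclusion as a spectral-norm bound on a single symmetric matrix and then invoke the discretization machinery of Bilu and Linial \cite{Bilu2006}. Let $J$ be the block matrix from the proof of \Cref{thm:eml-original}, whose $(i,j)$-block is the constant $n_i \times n_j$ matrix with every entry equal to $s_{ij}/n_j$, and set $M := A - J$. From that proof we already have three facts: $J$ is symmetric; the bilinear identities $\mathbf{1}_B^T A \mathbf{1}_C = |E(B,C)|$ and $\mathbf{1}_B^T J \mathbf{1}_C = \sum_{i,j}\sqrt{s_{ij}s_{ji}/(n_i n_j)}\,|B_i||C_j|$ hold; and the $S$-eigenvectors are eigenvectors of $M$ with eigenvalue $0$, while each bulk eigenvector $\phi_i$ is an eigenvector of $M$ with eigenvalue $\lambda_i$. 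In particular $\|M\| = \lambda_B$, so it suffices to prove $\lambda_B = O\!\left(\alpha(1 + \log(d_{\max}/\alpha))\right)$. Combining the two identities, the hypothesis is exactly the discrepancy bound $|\mathbf{1}_B^T M \mathbf{1}_C| \le \alpha\sqrt{|B||C|}$ for all disjoint $B, C \subseteq V$.

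The core is the abstract form of the Bilu--Linial converse, which is really a statement about symmetric matrices: if $M$ is symmetric, every row of $M$ has $\ell_1$-norm at most $D$, and $|\mathbf{1}_B^T M \mathbf{1}_C| \le \alpha\sqrt{|B||C|}$ for all disjoint $B,C$, then $\|M\| = O(\alpha(1 + \log(D/\alpha)))$. Two of these hypotheses are already in hand, so the only verification left is the row bound with $D = O(d_{\max})$. For a vertex $u \in V_i$ the row of $A$ has $\ell_1$-norm equal to $\deg(u) = \sum_j s_{ij} \le d_{\max}$, and the corresponding row of $J$ has $\ell_1$-norm $\sum_{j} n_j \cdot (s_{ij}/n_j) = \sum_j s_{ij} \le d_{\max}$; hence each row of $M$ has $\ell_1$-norm at most $2 d_{\max}$, and the lemma applied with $D = 2 d_{\max}$ gives the claimed bound on $\lambda_B = \|M\|$.

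To keep the argument self-contained I would reproduce the discretization step. Starting from a unit eigenvector $v$ with $Mv = \lambda v$ and $|\lambda| = \|M\|$ (replacing $M$ by $-M$ if needed so that $\lambda > 0$), I would split the coordinates by sign and into dyadic level sets $U_s = \{u : 2^{-s-1} < |v(u)| \le 2^{-s}\}$ and expand $\lambda = v^T M v$ as a double sum over pairs of level sets. Pairs with distinct indices or opposite signs have disjoint supports, so each such term is bounded by $\alpha\, 2^{-s-t}\sqrt{|U_s||U_t|}$; a Cauchy--Schwarz over the $O(L)$ surviving scales contributes $O(\alpha L)$ with $L \approx \log_2(1/\theta)$ for a truncation threshold $\theta$, while coordinates below $\theta$ contribute at most $O(\theta D)$ through the row bound, and balancing $\theta \sim \alpha/D$ produces the factor $\log(D/\alpha)$. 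The main obstacle, and the only genuinely delicate point, is the self-interaction of a single same-sign level set, $\mathbf{1}_{U_s}^T M \mathbf{1}_{U_s}$, which the discrepancy hypothesis does not directly control; this is handled by randomly bipartitioning $U_s$ into $U_s', U_s''$, observing that $\E[\mathbf{1}_{U_s'}^T M \mathbf{1}_{U_s''}] = \tfrac14\big(\mathbf{1}_{U_s}^T M \mathbf{1}_{U_s} - \sum_{u\in U_s} M_{uu}\big)$, so that $|\mathbf{1}_{U_s}^T M \mathbf{1}_{U_s}| \le 2\alpha|U_s| + \sum_{u\in U_s}|M_{uu}|$, with the diagonal term $|M_{uu}| = s_{\tau(u)\tau(u)}/n_{\tau(u)}$ again absorbed into the $O(d_{\max})$ row bound. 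Everything outside this lemma, namely the reduction to $M$, its symmetry, the discrepancy identity, and the row-sum estimate, is routine given \Cref{thm:eml-original}, so the proof reduces to carrying the Bilu--Linial discretization through verbatim with $D = 2 d_{\max}$.
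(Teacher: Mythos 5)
Your proposal is correct and follows essentially the same route as the paper: both reduce the statement to a spectral-norm bound on the symmetric matrix $A-J$, verify the $2d_{\max}$ row $\ell_1$-bound and the discrepancy hypothesis, and invoke the Bilu--Linial converse lemma (their Lemma 3.3), which the paper cites as a black box while you additionally sketch its dyadic-level-set proof. Your stated conclusion $O(\alpha(1+\log(d_{\max}/\alpha)))$ is in fact the correct Bilu--Linial form, which the theorem statement appears to render with a typographical slip.
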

\begin{proof}
Our proof is nearly identical to the proof of Corollary 5.1 of \cite{Bilu2006}.
The largest eigenvalue of $A - P$ is the largest bulk eigenvalue of $A$. Moreover, $A - P$ is symmetric and the largest $\ell_1$-norm of a row is at most $2d_{\textrm{max}}$. By Lemma 3.3 of \cite{Bilu2006} it suffices to show that $\| \mathbf{1}_B^T(A - P)\mathbf{1}_C \| \leq \alpha \|\mathbf{1}_B\|\|\mathbf{1}_C\|$, but this follows directly from our assumptions on $B$ and $C$.
\end{proof}

\subsection{Alon-Boppana Bound}
The Alon-Boppana bound provides a lower bound on the second largest eigenvalue of a $d$-regular graph. In this section we prove an analogous lower bound on the largest bulk eigenvalue of an $S$-regular graph. One variant of the Alon-Boppana bound says that for a $d$-regular graph $\lambda_2 > 2 \sqrt{d - 1} - o(1)$ where the term $o(1)$ tends to zero as the size of the graph tends towards infinity, and this is the variant we generalize. We recommend \cite{Hoory2006} for a survey on the bound.
For $d$-regular graphs the bound is computed by counting the number of closed walks on the $d$-regular tree, which is the universal cover of a $d$-regular graph.
Similarly, we will begin by counting the number of closed walks on the $S$-regular tree, which is the universal cover of an $S$-regular graph. Note that the length of any closed walk on a tree must be even.
In the following lemma we provide a lower bound on $W_i^{(2\ell)}$ in terms of the eigenbasis of $S$.

\begin{lemma}\label{lem:treewalks}
The number of closed walks of length $2\ell$ on the $S$-regular tree rooted at a vertex in $V_i$ is bounded below by \[W_i^{(2\ell)} > \sum_{j=1}^k \lambda_{S_j}^\ell \left\langle \one, \phi_{S_j} \right\rangle \phi_{S_j}(i). \]
\end{lemma}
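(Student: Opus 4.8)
The plan is to bound $W_i^{(2\ell)}$ from below by the number of closed walks on $\T_S$ that simply \emph{retrace} their first half, and then to count those retracing walks using powers of $S$ via \Cref{cor:s-eig}.

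First I would fix a vertex $v \in V_i$ and observe that every walk $P$ of length $\ell$ starting at $v$ extends to a closed walk $P\bar{P}$ of length $2\ell$, where $\bar{P}$ is the step-by-step reversal of $P$; distinct walks $P$ yield distinct closed walks, so
\[
W_i^{(2\ell)} \geq \#\{\text{walks of length }\ell\text{ on }\T_S\text{ starting at }v\}.
\]
Next I would count the right-hand side. Since a vertex of $V_i$ in $\T_S$ has exactly $s_{im}$ neighbors in $V_m$, an induction on $\ell$ shows that $(S^\ell)_{im}$ equals the number of length-$\ell$ walks from $v$ ending in cell $V_m$ (base case $(S^0)_{im} = \delta_{im}$, and each such walk is a length-$(\ell-1)$ walk ending in some $V_p$ followed by a step to one of the $s_{pm}$ neighbors in $V_m$). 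Summing over $m$ gives $\#\{\text{length-}\ell\text{ walks from }v\} = \sum_{m=1}^k (S^\ell)_{im}$.

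It remains to identify $\sum_{m=1}^k (S^\ell)_{im}$ with the claimed spectral sum. Here I would invoke the matrix $J_\ell$ of \Cref{cor:s-eig}, whose $uv$-entry is $(S^\ell)_{\tau(u)\tau(v)}/n_{\tau(v)}$ and which satisfies $J_\ell = \sum_{j=1}^k \lambda_{S_j}^\ell\, \overline{\phi}_{S_j}\,\overline{\phi}_{S_j}^{\,T}$ for the (normalized) extended eigenvectors $\overline{\phi}_{S_j}(u) = \phi_{S_j}(\tau(u))$. On one hand, summing the spectral form over the first index with the second index fixed to $v \in V_i$ gives $\langle \one, J_\ell e_v\rangle = \sum_{j=1}^k \lambda_{S_j}^\ell \langle \one, \overline{\phi}_{S_j}\rangle\, \phi_{S_j}(i)$, which is exactly the right-hand side of the lemma (reading $\langle \one, \phi_{S_j}\rangle$ as the all-ones pairing against $\overline{\phi}_{S_j}$). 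On the other hand, summing the entry formula over the first index and applying the balance relation $n_m (S^\ell)_{mi} = n_i (S^\ell)_{im}$ gives $\langle \one, J_\ell e_v\rangle = \sum_{m=1}^k n_m (S^\ell)_{mi}/n_i = \sum_{m=1}^k (S^\ell)_{im}$. Combining the three displays yields $W_i^{(2\ell)} \geq \sum_{j=1}^k \lambda_{S_j}^\ell \langle \one, \phi_{S_j}\rangle \phi_{S_j}(i)$.

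Finally, for strictness: when $\ell \geq 2$ there are closed walks of length $2\ell$ at $v$ that are not of retracing form, for instance one that steps to a neighbor, returns, then steps to a \emph{different} neighbor and returns (such walks exist because every vertex of $\T_S$ has degree at least two). These are not among the walks $P\bar{P}$, so the bound is strict. I expect the main obstacle to be the bookkeeping in the middle step: translating the purely combinatorial walk count into the $S$-eigenbasis requires tracking the normalization of the extended eigenvectors and the gap between the left and right eigenvectors of the non-symmetric matrix $S$, which is precisely what the balance equation $n_m s_{mi} = n_i s_{im}$ (and its $\ell$-th power form above) resolves by symmetrizing $S$.
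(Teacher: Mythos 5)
Your proof is correct and follows the same core strategy as the paper's: lower-bound $W_i^{(2\ell)}$ by the walks of length $2\ell$ that retrace their first half, observe that these are counted by $\sum_{m}(S^\ell)_{im} = (S^\ell\one)(i)$, and then expand this quantity spectrally. The only real divergence is in the last step. The paper simply writes $\one\in\R^k$ as $\sum_j\langle\one,\phi_{S_j}\rangle\phi_{S_j}$ and reads off the coefficient of $\lambda_{S_j}^\ell$, which tacitly treats the eigenvectors of the (generally non-symmetric) matrix $S$ as orthonormal in $\R^k$; you instead route the identification through the matrix $J_\ell$ of \Cref{cor:s-eig} and the powered balance relation $n_m(S^\ell)_{mi}=n_i(S^\ell)_{im}$. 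Your detour is slightly longer but handles the asymmetry of $S$ honestly, since the extended eigenvectors $\overline{\phi}_{S_j}$ genuinely are orthogonal in $\R^n$; the price is that your pairing $\langle\one,\overline{\phi}_{S_j}\rangle=\sum_i n_i\phi_{S_j}(i)$ is the $\R^n$ pairing, so the statement you prove matches the lemma only under the reading of $\langle\one,\phi_{S_j}\rangle$ that you flag explicitly. You also supply a justification for strictness (non-retracing closed walks when $\ell\ge 2$ and the root has two distinct neighbors) that the paper merely asserts; note only that strictness genuinely fails in degenerate cases such as $\ell\le 1$, a caveat the paper ignores as well.
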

\begin{proof}
Consider the following recurrence relation:
\begin{align*}
\tilde{W}_i^{(\ell)} &= \sum_{j=1}^k s_{ij} \tilde{W}_j^{(\ell - 1)}\\
\tilde{W}_i^{(0)} &= 1.
\end{align*}
This recurrence counts, not necessarily closed, walks of length $\ell$. These become closed walks of length $2\ell$ by following with the same walk but in the opposite direction. However, $\tilde{W}_i^{(\ell)}$ does not count all possible closed walks of length $2\ell$, only walks of a special type.
The closed walks counted by $\tilde{W}_i^{(\ell)}$ are symmetric in the sense that the $j^{\textrm{th}}$ vertex in the walk of length $\ell$ appears as both the $j^{\textrm{th}}$ and $(2\ell - j)^\textrm{th}$ vertex in its corresponding closed walk of length $2\ell$. Hence, $W_i^{(2\ell)} > \tilde{W}_i^{(\ell)}$.

The recurrence relation can be written in matrix form as $\tilde{W}_i^{(\ell)} = (S^\ell \one)(i)$ and by writing $\one$ as a linear combination of the eigenvectors of $S$ we obtain \[W_i^{(2\ell)} > \tilde{W}_i^{(\ell)} = (S^\ell \one)(i) = \sum_{j=1}^k \lambda_{S_j}^\ell \left\langle \one, \phi_{S_j} \right\rangle \phi_{S_j}(i).\]
\end{proof}

We now present an Alon-Boppana-like bound for $S$-regular graphs.

\begin{theorem}
For an $S$-regular graph $G$ we have the inequality $\lambda_B > \sqrt{\lambda_S} - o(1)$.
\end{theorem}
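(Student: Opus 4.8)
The plan is to use the trace (moment) method, playing the tree-walk lower bound of \Cref{lem:treewalks} against the spectral decomposition of $\Tr(A^{2\ell})$. First I would record the two competing expressions for the number of closed walks of length $2\ell$ in $G$. On the spectral side, splitting the eigenvalues into the $k$ $S$-eigenvalues and the bulk,
\[
\Tr(A^{2\ell}) = \sum_{j=1}^k \lambda_{S_j}^{2\ell} + \sum_{i=k+1}^n \lambda_i^{2\ell} \le k\,\lambda_S^{2\ell} + n\,\lambda_B^{2\ell},
\]
using $|\lambda_{S_j}|\le\lambda_S$ for all $j$ and $|\lambda_i|\le\lambda_B$ for every bulk index $i$. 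The structural point that makes the argument work is that the first sum has only $k$ terms, so it is $O(\lambda_S^{2\ell})$ \emph{uniformly in $n$}, whereas the lower bound below will grow like $n\,\lambda_S^{\ell}$.

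On the combinatorial side, since $\T_S$ is the universal cover of $G$, path lifting makes every closed walk on $\T_S$ rooted at a lift of $v$ project to a distinct closed walk on $G$ rooted at $v$; hence $W^{(2\ell)}(v)\ge W_{\tau(v)}^{(2\ell)}$ for each $v\in V$. Summing over $v$ (which contributes $n_i$ copies of $W_i^{(2\ell)}$) and applying \Cref{lem:treewalks},
\[
\Tr(A^{2\ell}) = \sum_{v\in V} W^{(2\ell)}(v) \ge \sum_{i=1}^k n_i\,W_i^{(2\ell)} > \sum_{i=1}^k n_i \sum_{j=1}^k \lambda_{S_j}^{\ell}\langle \one,\phi_{S_j}\rangle \phi_{S_j}(i) = \mathbf{n}^{T} S^{\ell}\one,
\]
where $\mathbf n=(n_1,\dots,n_k)$ and the last equality is just the eigendecomposition $S^{\ell}\one=\sum_j\lambda_{S_j}^\ell\langle\one,\phi_{S_j}\rangle\phi_{S_j}$. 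Here I would invoke Perron--Frobenius: because $S$ is irreducible and nonnegative with spectral radius $\lambda_S$ and strictly positive left and right Perron eigenvectors, and because $\mathbf n$ and $\one$ are strictly positive, there is a constant $c=c(S)>0$ with $\mathbf n^{T} S^{\ell}\one \ge c\,n\,\lambda_S^{\ell}$ (using that $n_i/n$ is fixed across $\G(S)$). In the aperiodic case this holds for all large $\ell$; if $S$ is periodic one restricts $\ell$ to a residue class modulo the period on which the Perron projection contributes positively.

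Combining the two bounds gives $c\,n\,\lambda_S^{\ell}\le k\,\lambda_S^{2\ell}+n\,\lambda_B^{2\ell}$, hence
\[
\lambda_B^{2\ell} \ge c\,\lambda_S^{\ell} - \frac{k\,\lambda_S^{2\ell}}{n}.
\]
To finish I would let $n\to\infty$ along the family while choosing $\ell=\ell(n)\to\infty$ slowly enough that $\lambda_S^{2\ell}=o(n)$ — for instance $\ell=\lfloor \log n/(3\log\lambda_S)\rfloor$, so that $\lambda_S^{2\ell}\le n^{2/3}$ — which kills the error term and yields $\lambda_B^{2\ell}\ge c\,\lambda_S^{\ell}(1-o(1))$. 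Taking $2\ell$-th roots and using $c^{1/2\ell}\to1$ gives $\lambda_B\ge\sqrt{\lambda_S}\,(1-o(1))$, which is the claim. I expect the main obstacle to be the Perron--Frobenius step: making the bound $\mathbf n^{T}S^{\ell}\one\ge c\,n\,\lambda_S^{\ell}$ clean and uniform in $n$, in particular handling a possibly periodic $S$ (the bipartite-type case where $\lambda_S$ is not the unique eigenvalue of maximum modulus), together with carefully coordinating the two limits $\ell\to\infty$ and $n\to\infty$ so the final $o(1)$ genuinely vanishes.
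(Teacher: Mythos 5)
Your proof is correct and takes essentially the same route as the paper's: both lower-bound $\Tr(A^{2\ell})$ by lifting closed walks to the $S$-regular tree via \Cref{lem:treewalks}, upper-bound it spectrally by $\sum_{j}\lambda_{S_j}^{2\ell}+n\lambda_B^{2\ell}$, and send $n\to\infty$ with $\ell\to\infty$, $\ell=o(\log n)$, so that the $S$-eigenvalue term becomes negligible. The only difference is cosmetic: you extract the $c\,n\,\lambda_S^{\ell}$ lower bound via an explicit Perron--Frobenius constant, whereas the paper evaluates the same quantity by a squeeze-theorem limit; your handling of positivity and possible periodicity of $S$ is, if anything, the more careful of the two.
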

\begin{proof}
We begin by noting that the number of closed walks of length $2\ell$ in $G$ is given by 
\begin{align*}
\Tr A^{2\ell} &= \sum_{i=1}^k \lambda_{S_i}^{2\ell} + \sum_{i=k+1}^{n} \lambda_{i}^{2\ell}\\ 
&\leq \sum_{i=1}^k \lambda_{S_i}^{2\ell} + n \lambda_B^{2\ell}
\end{align*}
The number of closed walks of length $2\ell$ rooted at a vertex $v \in V_i$ is bounded below by the number of closed walks rooted at an element of the fiber of $v$ in the universal cover of $G$. The universal cover of $G$ is the $S$-regular tree, so by summing over the cells of the equitable partition and applying \Cref{lem:treewalks} we have \[\sum_{i=1}^k \sum_{j=1}^k n_i \lambda_{S_j}^\ell \langle \one, \phi_{S_j} \rangle \phi_{S_j}(i) < \Tr A^{2\ell} \leq \sum_{i=1}^k \lambda_{S_i}^{2\ell} + n \lambda_B^{2\ell}.\]
By rearranging the inequality we obtain \[\lambda_B > \left( \frac{1}{n}\sum_{i=1}^k n_i \left( \sum_{j=1}^k \lambda_{S_j}^{\ell} \langle \mathbf{1}, \phi_{S_j} \rangle \phi_{S_j}(i) \right) - \frac{1}{n} \sum_{i=1}^k \lambda_{S_i}^{2\ell} \right)^{\frac{1}{2\ell}}. \]
We consider the limit as $n \rightarrow \infty$ while setting $\ell = o(\log n)$.
Clearly, we have \[\frac{1}{n} \sum_{i=1}^k \lambda_{S_i}^{2\ell} = o(1).\]
Recall that $n_i/n$ is fixed for fixed $S$, so all we need is an asymptotic bound on \[\left( \sum_{i=1}^k \sum_{j=1}^k \lambda_{S_j}^{\ell} \langle \mathbf{1}, \phi_{S_j} \rangle \phi_{S_j}(i) \right)^{\frac{1}{2\ell}}.\]
Set $m = \max_{i,j}\{ \langle \mathbf{1}, \phi_{S_j} \rangle \phi_{S_j}(i)$ then for fixed $p$ and $q$ we have \[\left( \lambda_S^{\ell} \langle \mathbf{1}, \phi_{S_p} \rangle \phi_{S_q}(p) \right)^\frac{1}{2\ell} \leq \left( \sum_{i=1}^k \sum_{j=1}^k \lambda_{S_j}^{\ell} \langle \mathbf{1}, \phi_{S_j} \rangle \phi_{S_j}(i) \right)^{\frac{1}{2\ell}} \leq \left( k^2 \lambda_S^{\ell}m \right)^\frac{1}{2\ell},\] so by the squeeze theorem we have \[ \lim_{\ell \rightarrow \infty} \left( \sum_{i=1}^k \sum_{j=1}^k \lambda_{S_j}^{\ell} \langle \mathbf{1}, \phi_{S_j} \rangle \phi_{S_j}(i) \right)^{\frac{1}{2\ell}} = \sqrt{\lambda_S}, \]
which yields the desired inequality \[\lambda_B > \sqrt{\lambda_S} - o(1).\]
\end{proof}

For $d$-regular graphs our bound reduces to $\lambda_2 > \sqrt{d} - o(1)$ which isn't quite as good as the actual bound of $\lambda_2 > 2\sqrt{d-1} - o(1)$. This is because \Cref{lem:treewalks} only obtains a lower bound on the number of closed walks in the $S$-regular tree. However, it is straightforward to count the number of closed walks in the $d$-regular tree exactly. We conjecture that an exact count of the number of closed walks in the $S$-regular tree would yield an Alon-Boppana bound for $S$-regular graphs that would match the $d$-regular case.

In our last theorem we use $\lambda_S$ and $\lambda_B$ to upper bound the diameter of an $S$-regular graph.
Our result generalizes Chung's bound for $d$-regular graphs~\cite{Chung1989}. 
\begin{theorem}
The diameter of an $S$-regular graph is bounded above by $O \left( {\log(n)}\right)$.
\end{theorem}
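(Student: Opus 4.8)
The plan is to deduce the diameter bound from the positivity pattern of a single well-chosen matrix power, following the strategy Chung uses for $d$-regular graphs~\cite{Chung1989}. For any two vertices $u,v$ one has $d(u,v)\le T$ precisely when $((I+A)^T)_{uv}>0$; I would work with $I+A$ rather than $A$ because $I+A$ is a nonnegative primitive matrix as soon as $G$ is connected (the added self-loops kill any periodicity), which removes the parity obstruction that a bipartite $S$ would otherwise create. Thus it suffices to produce a value $T=O(\log n)$ for which $((I+A)^T)_{uv}>0$ simultaneously for every pair $u,v$. In Chung's argument this positivity is forced by the dominance of the Perron term; in the $S$-regular setting the analogous role is played by the entire $S$-eigenvalue contribution, which I can control exactly.

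Concretely, I would expand $((I+A)^T)_{uv}=\sum_{i=1}^n(1+\mu_i)^T\phi_i(u)\phi_i(v)$ and split it into the $S$-part ($1\le i\le k$) and the bulk part ($k< i\le n$). Since $I+A$ is itself an $S$-regular matrix with vertex weights $\one$ and edge weights $\one$, \Cref{prop:s-eigen} identifies its quotient as $I+S$, and the computation proving \Cref{cor:s-eig}(4) applies verbatim to show that the $S$-part of $((I+A)^T)_{uv}$ equals $((I+S)^T)_{\tau(u)\tau(v)}/n_{\tau(v)}$. Because $S$ is irreducible with nonnegative integer entries, $I+S$ is primitive, so Perron--Frobenius gives $(1+\lambda_S)^{-T}((I+S)^T)_{ab}\to(\psi_S)_a(\tilde\psi_S)_b/\langle\tilde\psi_S,\psi_S\rangle$, a strictly positive constant depending only on $S$. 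Hence there is a constant $c_0>0$, depending only on $S$, with $S\text{-part}\ge c_0(1+\lambda_S)^T/n_{\tau(v)}\ge c_0(1+\lambda_S)^T/n$ for all $T$ past an $S$-dependent threshold.

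For the bulk part, every bulk eigenvalue of $I+A$ has the form $1+\mu_i$ with $|\mu_i|\le\lambda_B$, so $|1+\mu_i|\le 1+\lambda_B$, and Cauchy--Schwarz against the orthonormal bulk eigenvectors yields $\bigl|\sum_{i=k+1}^n(1+\mu_i)^T\phi_i(u)\phi_i(v)\bigr|\le(1+\lambda_B)^T\|e_u\|\,\|e_v\|=(1+\lambda_B)^T$. Combining the two estimates gives $((I+A)^T)_{uv}\ge c_0(1+\lambda_S)^T/n-(1+\lambda_B)^T$, which is strictly positive as soon as $\bigl((1+\lambda_S)/(1+\lambda_B)\bigr)^T>n/c_0$, i.e. once $T>\log(n/c_0)\big/\log\bigl((1+\lambda_S)/(1+\lambda_B)\bigr)$. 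Taking $T$ to be the ceiling of this threshold bounds $d(u,v)$ for all $u,v$ at once, so the diameter is at most $\log(n/c_0)\big/\log\bigl((1+\lambda_S)/(1+\lambda_B)\bigr)+O(1)=O(\log n)$.

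The step requiring the most care is the same one that constrains Chung's bound: the estimate is only genuinely logarithmic when $\lambda_S>\lambda_B$, since otherwise $\log\bigl((1+\lambda_S)/(1+\lambda_B)\bigr)$ is not bounded away from $0$ and the implied constant blows up. Thus the $O(\log n)$ should be read with an implied constant governed by the spectral gap $\lambda_S-\lambda_B$; the long cycles $C_n$ (where $\lambda_B\to\lambda_S$ and the diameter is linear in $n$) show that some separation is unavoidable, exactly as Chung's bound degenerates as $\lambda_1/\lambda_2\to1$. The remaining subtleties are routine: the passage to $I+A$ neutralizes the periodicity of $S$, and one could replace the crude power $(1+x)^T$ by a Chebyshev polynomial in $A$---with the Perron term again replaced by the $S$-part $(p(S))_{\tau(u)\tau(v)}/n_{\tau(v)}$---to sharpen the constant, though this is not needed for the stated asymptotic.
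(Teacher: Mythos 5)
Your proposal is correct and follows the same overall strategy as the paper: expand a matrix power in the orthonormal eigenbasis, split into the $S$-eigenvector contribution and the bulk contribution, identify the $S$-part with a power of the quotient matrix via \Cref{cor:s-eig}(4), bound the bulk part by $\lambda_B$-type decay and Cauchy--Schwarz, and solve for the power at which positivity is forced. The two places you deviate are both improvements rather than detours. First, you work with $(I+A)^T$ instead of $A^m$; since $I+S$ is primitive whenever $S$ is irreducible, Perron--Frobenius gives you a uniform positive lower bound $c_0(1+\lambda_S)^T/n$ on the $S$-part for all large $T$, whereas the paper's assertion that $(S^m)_{ij}=\Theta(\lambda_S^m)$ silently fails for one parity of $m$ when the quotient graph of $S$ is bipartite --- your shift by the identity removes that obstruction cleanly. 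Second, you state explicitly that the implied constant in $O(\log n)$ is $\log\bigl((1+\lambda_S)/(1+\lambda_B)\bigr)^{-1}$ and hence degenerates as $\lambda_B\to\lambda_S$; the paper's closing sentence (invoking only $\sqrt{\lambda_S}-o(1)\le\lambda_B\le\lambda_S$) glosses over exactly this dependence, and your $C_n$ example is the right sanity check that some spectral separation must enter the constant. The only cosmetic difference is that the paper's Cauchy--Schwarz step retains the slightly sharper factors $\bigl(1-\sum_{\ell\le k}\phi_\ell(u)^2\bigr)^{1/2}$ in the bulk bound, but this does not affect the asymptotics.
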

\begin{proof}
The diameter of a graph is the minimum value $m$ such that $(A^t)_{j\ell}>0$  for some $t\leq m$. Let $u,v\in V$ with $\tau(u)=i$ and $\tau(v)=j$. We have

\begin{align*}
A^m_{uv} &= \sum_{\ell=1}^k \lambda_{S_\ell}^m \left( \phi_\ell \phi_\ell^T \right)_{uv} + \sum_{\ell=k+1}^n \lambda_{\ell}^m \left( \phi_\ell \phi_\ell^T \right)_{uv} \\
&\geq \left|\sum_{\ell=1}^k \lambda_{S_\ell}^m \left( \phi_\ell \phi_\ell^T \right)_{uv}\right| - \left|\sum_{\ell=k+1}^n \lambda_{\ell}^m \left( \phi_\ell \phi_\ell^T \right)_{uv}\right| \\
&\geq \left|\sum_{\ell=1}^k \lambda_{S_\ell}^m \left( \phi_\ell \phi_\ell^T \right)_{uv}\right| - \sum_{\ell=k+1}^n \lambda_{\ell}^m \left( \phi_\ell \phi_\ell^T \right)_{uv}\\
&\geq \left|\sum_{\ell=1}^k \lambda_{S_\ell}^m \left( \phi_\ell \phi_\ell^T \right)_{uv}\right| - |\lambda_B^m| \left| \sum_{\ell=k+1}^n \left(\phi_\ell \phi_\ell^T \right)_{uv}  \right|\\
&\geq\left|\sum_{\ell=1}^k \lambda_{S_\ell}^m \left( \phi_\ell \phi_\ell^T \right)_{uv}\right| - |\lambda_B^m| \left| \sum_{\ell=k+1}^n \left| \phi_\ell(i) \right| \left| \phi_\ell(j) \right| \right| \\
&\geq \left|\sum_{\ell=1}^k \lambda_{S_\ell}^m \left( \phi_\ell \phi_\ell^T \right)_{uv}\right| - |\lambda_B^m| \left( \sum_{\ell=k+1}^n \phi_\ell(u)^2\right)^{1/2}\left( \sum_{\ell=k+1}^n \phi_\ell(v)^2\right)^{1/2} \\
&= \left|\sum_{\ell=1}^k \lambda_{S_\ell}^m \left( \phi_\ell \phi_\ell^T \right)_{uv}\right| - |\lambda_B^m| \left( 1 - \sum_{\ell=1}^k \phi_\ell(u)^2 \right)^{1/2} \left( 1 - \sum_{\ell=1}^k \phi_\ell(v)^2 \right)^{1/2}
\end{align*}
By \Cref{cor:s-eig}
\[
\left|\sum_{\ell=1}^k \lambda_{S_\ell}^m \left( \phi_\ell \phi_\ell^T \right)_{uv}\right| = \sqrt{\frac{(S^m)_{ij}(S^m)_{ji}}{n_in_j}} \text{ and } \sum_{\ell=1}^k \phi_\ell(u)^2 = \frac{1}{n_i}.
\]
Thus, when ${\sqrt{(S^m)_{ij}(S^m)_{ji}}}/{|\lambda_B^m|} > n-1>\sqrt{(n_i - 1)(n_j - 1)}$, we have $A^m_{uv}>0$. 
For large $m$, $(S^m)_{ij} = \Theta\left(\lambda_S^m\right)$, and so, $\sqrt{(S^m)_{ij}(S^m)_{ji}}/|\lambda_B^m| = \Theta\left(\left|\frac{\lambda_S}{\lambda_B}\right|^m\right)$. Because $\lambda_B$ is bounded below by $\sqrt{\lambda_S}-o(1)$ and bounded above by $\lambda_S$, the diameter is bounded above by $O\left({\log(n)}\right)$.
\end{proof}

\section{Acknowledgements}
This work was funded by the In-house Laboratory Independent Research and Naval Innovative Science and Engineering programs at the Naval Surface Warfare Center, Dahlgren Division.

\bibliography{S-Reg.bib}

\end{document}